\newcounter{todocounter}
\newtheorem{theorem}[equation]{Theorem}
\newtheorem{lemma}[equation]{Lemma}
\newtheorem{proposition}[equation]{Proposition}
\newtheorem{corollary}[equation]{Corollary}
\theoremstyle{definition}
\newtheorem{definition}[equation]{Definition}
\newtheorem{example}[equation]{Example}
\newtheorem{remark}[equation]{Remark}
\newtheorem{assumption}[equation]{Assumption}
\newtheorem{convention}[equation]{Convention}
\newtheorem{notation}[equation]{Notation}
\numberwithin{equation}{subsection}
\newcommand{\M}{\mathcal{M}}
\newcommand{\mtosdash}{\M^{S(-)}}
\newcommand{\mtosofc}{\M^{\sofc}}
\newcommand{\Msigman}{\M^{\Sigma_n}}
\newcommand{\Msigmanop}{\M^{\sigmaop_n}}
\newcommand{\N}{\mathcal{N}}
\newcommand{\ntosdash}{\N^{S(-)}}
\renewcommand{\emptyset}{\varnothing}
\newcommand{\Ho}{\mathsf{Ho}}
\newcommand{\po}{\ar@{}[dr]|(.7){\Searrow}}
\newcommand{\pb}{\ar@{}[dr]|(.3){\Nwarrow}}
\newcommand{\ch}{\mathsf{ch}}
\newcommand{\Ch}{\mathsf{Ch}}
\newcommand{\Chk}{\Ch(k)}
\newcommand{\chk}{\ch(k)}
\newcommand{\boxprod}{\mathbin\square}
\newcommand{\tensorover}[1]{\underset{#1}{\otimes}}
\newcommand{\tensoroversigman}{\underset{\Sigma_n}{\otimes}}
\newcommand{\algo}{\alg(\sO)}
\newcommand{\algosubc}{\alg(\osubc)}
\newcommand{\algpsubc}{\alg(\psubc)}
\newcommand{\algrosubc}{\alg(R\osubc)}
\newcommand{\algso}{\alg(S,\sO)}
\newcommand{\algsp}{\alg(S,\sP)}
\newcommand{\algsoone}{\alg(S^1,\sO^1)}
\newcommand{\algsotwo}{\alg(S^2,\sO^2)}
\newcommand{\algass}{\alg(\Ass)}
\newcommand{\algassc}{\alg(\Assc)}
\newcommand{\algsass}{\alg(S,\Ass)}
\newcommand{\algsassone}{\alg(S^1,\Assone)}
\newcommand{\algsasstwo}{\alg(S^2,\Asstwo)}
\newcommand{\algp}{\alg(\sP)}
\newcommand{\algq}{\alg(\sQ)}
\tikzset{auto}
\tikzset{empty/.style={circle,inner sep=0pt,minimum size=6mm}}
\tikzset{emptyvt/.style={circle,inner sep=0pt,minimum size=0mm}}
\tikzset{plain/.style={circle,draw,very thick,
inner sep=0pt,minimum size=6mm}}
\tikzset{fatplain/.style={rounded rectangle,draw,very thick,minimum size=6mm}}
\tikzset{bigplain/.style={rounded rectangle,draw,very thick,minimum size=.8cm}}
\tikzset{yellowvt/.style={circle,draw,fill=yellow,very thick,inner sep=0pt,minimum size=6mm}}
\tikzset{bluevt/.style={circle,draw,fill=blue!20,very thick,inner sep=0pt,minimum size=6mm}}
\tikzset{greenvt/.style={circle,draw,fill=green!30,very thick,inner sep=0pt,minimum size=6mm}}
\tikzset{redvt/.style={circle,draw,fill=red!30,very thick,inner sep=0pt,minimum size=6mm}}
\tikzset{arrow/.style={->,thick}}
\tikzset{dashedarrow/.style={->,dashed,thick}}
\tikzset{dottedarrow/.style={->,dotted,thick}}
\tikzset{mapto/.style={|->,thick}}
\tikzset{implies/.style={thick,double,double equal sign distance,-implies}} 
\tikzset{line/.style={thick}}
\tikzset{dottedline/.style={dotted,thick}}
\tikzset{dashedline/.style={dashed,thick}}
\tikzset{inputleg/.style={<-,thick}}
\tikzset{outputleg/.style={->,thick}}
\tikzset{dottedinput/.style={<-,dotted,thick}}
\newcommand{\adjoint}{\hspace{-.1cm}
\nicearrow\xymatrix{ \ar@<2pt>[r] & \ar@<2pt>[l]}\hspace{-.1cm}}
\renewcommand{\hookrightarrow}{\nicexy{\ar@{^{(}->}[r] &}}
\newcommand{\nicearrow}{\SelectTips{cm}{10}}
\newcommand{\nicexy}{\nicearrow\xymatrix@C+10pt@R+10pt}
\newcommand{\narrowxy}{\nicearrow\xymatrix@R+10pt}
\newcommand{\pushout}{\ar@{}[dr]|(0.75){\Searrow}}
\newcommand{\drrpushout}{\ar@{}[drr]|(0.90){\Searrow}}
\newcommand{\comp}{\circ}
\newcommand{\defn}{\,\overset{\mathrm{def}}{=\joinrel=}\,}
\newcommand{\fbar}{\overline{f}}
\newcommand{\pizero}{\pi_0}
\newcommand{\pizerom}{\pizero^{\M}}
\newcommand{\pizeron}{\pizero^{\N}}
\newcommand{\Rbar}{\overline{R}}
\renewcommand{\to}{\hspace{-.1cm}\nicearrow\xymatrix@C-.2cm{\ar[r]&}\hspace{-.1cm}}
\newcommand{\bbI}{\mathbb{I}}
\newcommand{\bbN}{\mathbb{N}}
\newcommand{\bbNsq}{\bbN^{\times 2}}
\newcommand{\tensorunit}{\mathbb{1}}
\newcommand{\oneone}{\tensorunit \amalg \tensorunit}
\newcommand{\tensorunitm}{\tensorunit^{\M}}
\newcommand{\tensorunitn}{\tensorunit^{\N}}
\newcommand{\frakC}{\mathfrak{C}}
\newcommand{\fC}{\mathfrak{C}}
\newcommand{\csquare}{\fC^{\times 2}}
\newcommand{\alphac}{\alpha_{\fC}}
\newcommand{\alphad}{\alpha_{\fD}}
\newcommand{\alphacbar}{\overline{\alphac}}
\newcommand{\alphabarc}{\alphacbar}
\newcommand{\alphastar}{\alpha^{\ast}}
\newcommand{\alphaone}{\alpha^{1}}
\newcommand{\alphatwo}{\alpha^{2}}
\newcommand{\Ralpha}{R_{\alpha}}
\newcommand{\Ralphac}{\Ralpha^{\fC}}
\newcommand{\betastar}{\beta^{\ast}}
\newcommand{\betastaro}{\betastar_{\sO}}
\newcommand{\epsilonc}{\epsilon_{\fC}}
\newcommand{\epsilonstar}{\epsilon^{\ast}}
\newcommand{\epsilonstaro}{\epsilonstar_{\sO}}
\newcommand{\etac}{\eta_{\fC}}
\newcommand{\etacstar}{\etac^{\ast}}
\newcommand{\iotac}{\iota_{\fC}}
\newcommand{\fD}{\mathfrak{D}}
\newcommand{\cat}{\mathsf{cat}}
\newcommand{\Acat}{A^{\cat}}
\newcommand{\Bcat}{B^{\cat}}
\newcommand{\Ccat}{C^{\cat}}
\newcommand{\Dcat}{D^{\cat}}
\newcommand{\fcat}{f^{\cat}}
\newcommand{\gcat}{g^{\cat}}
\newcommand{\hcat}{h^{\cat}}
\newcommand{\sG}{\mathsf{G}}
\newcommand{\sGc}{\sG_{\fC}}
\newcommand{\sGd}{\sG_{\fD}}
\newcommand{\Gm}{\sG^{\M}}
\newcommand{\Gmc}{\Gm_{\fC}}
\newcommand{\Gn}{\sG^{\N}}
\newcommand{\Gnc}{\Gn_{\fC}}
\newcommand{\sI}{\mathsf{I}}
\newcommand{\sO}{\mathsf{O}}
\newcommand{\sOop}{\sO^{\mathrm{op}}}
\newcommand{\Obar}{\overline{\sO}}
\newcommand{\osubc}{\sO_{\fC}}
\newcommand{\osubcone}{\sO^1_{\fC}}
\newcommand{\osubctwo}{\sO^2_{\fC}}
\newcommand{\osubd}{\sO_{\fD}}
\newcommand{\sP}{\mathsf{P}}
\newcommand{\psubc}{\sP_{\fC}}
\newcommand{\psubd}{\sP_{\fD}}
\newcommand{\sQ}{\mathsf{Q}}
\newcommand{\As}{\mathsf{As}}
\newcommand{\Asc}{\As_{\fC}}
\newcommand{\Ascs}{\Asc^S}
\newcommand{\Asd}{\As_{\fD}}
\newcommand{\Asds}{\Asd^S}
\newcommand{\Ass}{\As^S}
\newcommand{\Assone}{\As^{S^1}}
\newcommand{\Asstwo}{\As^{S^2}}
\newcommand{\Assc}{\Ass_{\fC}}
\newcommand{\tg}{\mathtt{G}}
\newcommand{\tgc}{\tg_{\fC}}
\newcommand{\tone}{\mathtt{1}}
\newcommand{\Lbar}{\overline{L}}
\newcommand{\Lbarc}{\Lbar^{\fC}}
\newcommand{\alphabar}{\overline{\alpha}}
\newcommand{\betabar}{\overline{\beta}}
\newcommand{\varphibar}{\overline{\varphi}}
\newcommand{\ua}{\underline{a}}
\newcommand{\ub}{\underline{b}}
\newcommand{\uc}{\underline{c}}
\newcommand{\ud}{\underline{d}}
\newcommand{\smallop}{{\scalebox{.5}{$\mathrm{op}$}}}
\newcommand{\calc}{\mathcal{C}}
\newcommand{\cald}{\mathcal{D}}
\newcommand{\caldop}{\mathcal{D}^{\smallop}}
\newcommand{\G}{\mathcal G}
\newcommand{\Gc}{\G_{\fC}}
\newcommand{\cJ}{\mathcal{J}}
\newcommand{\calm}{\mathcal{M}}
\newcommand{\calmc}{\calm^{\fC}}
\newcommand{\Cat}{\mathsf{Cat}}
\newcommand{\Catm}{\Cat_{\M}}
\newcommand{\Catmab}{\Catm^{\{a,b\}}}
\newcommand{\Catmc}{\Catm^{\fC}}
\newcommand{\Catmd}{\Catm^{\fD}}
\newcommand{\Catn}{\Cat_{\N}}
\newcommand{\set}{\mathsf{Set}}
\newcommand{\setop}{\set^{\mathrm{op}}}
\newcommand{\smod}{\mathsf{SMod}}
\newcommand{\sset}{\mathsf{SSet}}
\newcommand{\symseq}{\mathsf{SymSeq}}
\newcommand{\symseqc}{\symseq_{\fC}}
\newcommand{\symseqcm}{\symseqc(\calm)}
\newcommand{\alg}{\mathsf{Alg}}
\newcommand{\sigmaop}{\Sigma^{\smallop}}
\newcommand{\sigmaopb}{\Sigma^{\smallop}_{\smallbrb}}
\newcommand{\sigmaopc}{\Sigma^{\smallop}_{\smallbrc}}
\newcommand{\operad}{\mathsf{Operad}}
\newcommand{\operadm}{\operad(\M)}
\newcommand{\operadmop}{\operad(\M)^{\mathrm{op}}}
\newcommand{\operadc}{\operad^{\fC}}
\newcommand{\operadcm}{\operadc(\M)}
\newcommand{\operadscm}{\operad^{\sofc}(\M)}
\newcommand{\pofc}{\Sigma_{\frakC}}
\newcommand{\pofcop}
{\pofc^{\scalebox{.6}{$\mathrm{op}$}}}
\newcommand{\prof}{\mathsf{Prof}}
\newcommand{\profc}{\prof(\fC)}
\newcommand{\profcsq}{\profc^{\times 2}}
\newcommand{\profcc}{\profc \times \fC}
\newcommand{\profd}{\prof(\fD)}
\newcommand{\profdd}{\profd \times \fD}
\newcommand{\sofc}{S(\fC)}
\newcommand{\sofd}{S(\fD)}
\newcommand{\profscsc}{\prof(\sofc) \times \sofc}
\newcommand{\smallprof}[1]
{\raisebox{.05cm}{\scalebox{0.8}{#1}}}
\newcommand{\aiai}
{\smallprof{$\binom{a_i}{a_i}$}}
\newcommand{\bonebnaoneam}
{\smallprof{$\binom{b_{[1,n]}}{a_{[1,m]}}$}}
\newcommand{\ibonebnaoneam}
{\smallprof{$\binom{ib_{[1,n]}}{ia_{[1,m]}}$}}
\newcommand{\hibonebnaoneam}
{\smallprof{$\binom{hib_{[1,n]}}{hia_{[1,m]}}$}}
\newcommand{\bjbj}
{\smallprof{$\binom{b_{j}}{b_{j}}$}}
\newcommand{\cjbrbj}
{\smallprof{$\binom{c_j}{[\ub_j]}$}}
\newcommand{\singledbrb}
{\smallprof{$\binom{d}{[\ub]}$}}
\newcommand{\dc}
{\smallprof{$\binom{\ud}{\uc}$}}
\newcommand{\dcsingle}
{\smallprof{$\binom{d}{c}$}}
\newcommand{\duc}
{\smallprof{$\binom{d}{\uc}$}}
\newcommand{\fduc}
{\smallprof{$\binom{fd}{f\uc}$}}
\newcommand{\hduc}
{\smallprof{$\binom{hd}{h\uc}$}}
\newcommand{\fdfuc}
{\smallprof{$\binom{f_0d}{f_0\uc}$}}
\newcommand{\singledbrc}
{\smallprof{$\binom{d}{[\uc]}$}}
\newcommand{\fdc}
{\smallprof{$\binom{f\ud}{f\uc}$}}
\newcommand{\fzerodc}
{\smallprof{$\binom{f_0\ud}{f_0\uc}$}}
\newcommand{\tzerotonetn}{\smallprof{$\binom{t_0}{t_1,\ldots,t_n}$}}
\newcommand{\tzerotzero}{\smallprof{$\binom{t_0}{t_0}$}}
\newcommand{\fzerots}{\smallprof{$\binom{f_0t_0}{f_0t_1,\ldots,f_0t_n}$}}
\newcommand{\smallbr}[1]
{\raisebox{.03cm}{\scalebox{0.5}{#1}}}
\newcommand{\smallbrb}{\smallbr{$[\ub]$}}
\newcommand{\smallbrc}{\smallbr{$[\uc]$}}
\newcommand{\sigmabra}{\Sigma_{\smallbr{$[\ua]$}}}
\newcommand{\sigmabrb}{\Sigma_{\smallbr{$[\ub]$}}}
\newcommand{\sigmabrbj}{\Sigma_{\smallbr{$[\ub_j]$}}}
\newcommand{\sigmabrc}{\Sigma_{\smallbr{$[\uc]$}}}
\newcommand{\sigmabrbop}{\sigmabrb^{\smallop}}
\newcommand{\sigmabrbjop}{\sigmabrbj^{\smallop}}
\newcommand{\sigmabrcop}{\sigmabrc^{\smallop}}
\newcommand{\sigmabrcopd}{\sigmabrcop \times \{d\}}
\newcommand{\sigmacop}{\pofcop}
\newcommand{\sigmacopc}{\sigmacop \times \fC}
\newcommand{\dbrch}{([\uc];d)}
\renewcommand{\lim}{\mathsf{lim}\,}
\newcommand{\Hom}{\mathsf{Hom}}
\newcommand{\Homm}{\Hom_{\M}}
\DeclareMathOperator{\Id}{Id}
\DeclareMathOperator{\Kan}{\mathsf{Kan}}
\DeclareMathOperator{\Ob}{Ob}
\begin{document}

\title{Dwyer-Kan Homotopy Theory of Algebras over Operadic Collections}


\author{Donald Yau}
\address{The Ohio State University at Newark \\ Newark, OH}
\email{yau.22@osu.edu}

\begin{abstract}
Over suitable monoidal model categories, we construct a Dwyer-Kan model category structure on the category of algebras over an augmented operadic collection.  As examples we obtain Dwyer-Kan model category structure on the categories of enriched wheeled props, wheeled properads, and wheeled operads, among others.  This result extends known model category structure on the categories of operads, properads, and props enriched in simplicial sets and other monoidal model categories.  We also show that our Dwyer-Kan model category structure is well behaved with respect to simultaneous changes of the underlying monoidal model category and the augmented operadic collection.
\end{abstract}

\maketitle

\tableofcontents

\section{Introduction}

This paper is about higher categorical properties of operad-like algebraic structures.  An important component in the study of higher algebraic structure is a suitable model category structure on the category of small categories enriched in simplicial sets, or simplicial categories for short.  Such a model category structure, called the Dwyer-Kan model category structure, was first proved in \cite{bergner}.  For more general base monoidal model categories, the Dwyer-Kan model category structure on small enriched categories was obtained in \cite{lurie,muro15}.  A closely related model category structure on small enriched categories appeared in \cite{bm13}.

Operads are generalizations of categories in which arrows are allowed to have any finite number of inputs.  They are sometimes called small symmetric multicategories; see \cite{yau16} for a gentle introduction to operads.  In the algebraic setting, algebras over operads include associative algebras, commutative algebras, Lie algebras, Poisson algebras, and diagrams of such algebras, among others.  In the topological setting, algebras over operads include iterated loop spaces, $E_n$-algebras, and other structured ring spectra.  Going up from categories to operads, there is an analogous Dwyer-Kan model category structure on the category of simplicial operads, i.e., operads with a set of colors, which are allowed to vary, enriched in simplicial sets \cite{cm13,robertson}.  For more general base monoidal model categories, such a  Dwyer-Kan model category structure on enriched operads was obtained in \cite{cav14}.  

Props are generalizations of operads in the sense that arrows are allowed to have any finite number of inputs and outputs; see \cite{jy2} for an in-depth discussion of various cousins of operads and props, including their wheeled generalizations.  Props can model both multiplicative and comultiplicative structure simultaneously, including bialgebras and Lie bialgebras.  The Dwyer-Kan model category structure for simplicial props \cite{hr16} and simplicial properads \cite{hry.properad} are known to exist.  For more general base monoidal model categories, the Dwyer-Kan model category structure on enriched props was proved in \cite{cav15}.

The first main objective of this paper is to prove the existence of the Dwyer-Kan model category structure for a much larger class of enriched operad-like structure in a unified manner.  This includes not only enriched operads, properads, and props, but also enriched wheeled operads, wheeled properads, and wheeled props \cite{jy2}.  Wheeled versions of operads, properads, and props are important in topological conformal field theory, deformation quantization, Poisson geometry, and graph cohomology, among others \cite{mms,merkulov}.  The wheeled part of these algebraic structures refers to a structure map called the \emph{contraction}, which in specific examples is actually the trace map.

For a fixed color set $\fC$, there is a colored operad whose algebras are exactly the $\fC$-colored wheeled props \cite{jy2}.  Similar colored operads exist for the other operad-like structure with a fixed color set.  Allowing the color set to vary, we arrive at the concept of an \emph{operadic collection} $(S,\sO)$ (Def. \ref{def:operadic.collection}).  Here $S$ parametrizes the possible numbers of inputs and outputs, and $\sO$ assigns to each set an operad in a functorial way.  For example, there is an operadic collection whose algebras are exactly the enriched wheeled props, and likewise for the other enriched operad-like structure (Example \ref{ex:gprops}).  The category of algebras over an operadic collection is actually the Grothendieck construction of some functor; see \eqref{op.coll.obar} and Remark \ref{rk:integral.model}.

An operadic collection is \emph{augmented} if, roughly speaking, it is nicely related to the operadic collection whose algebras are enriched categories with extra entries (Def. \ref{def:aug.opcoll}).  An augmented operadic collection is \emph{admissible} if its category of algebras admits the Dwyer-Kan model category structure (Def. \ref{def:opcoll.admissible}).  The following observation is our first main result. It will appear as Theorem \ref{algso.dkmodel} below.  

\begin{theorem}\label{first.main.theorem}
Every augmented operadic collection in a convenient model category (Def. \ref{def:convenient}) is admissible.
\end{theorem}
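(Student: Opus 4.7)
The plan is to transfer the Dwyer-Kan model structure from the known case of enriched categories along the augmentation. An augmented operadic collection $(S,\sO)$ comes equipped with a comparison to the operadic collection for $\M$-enriched categories, so there is a ``underlying enriched category'' functor $U \colon \alg(S,\sO) \to \Catm$ with a left adjoint (the free algebra functor $F$). I would declare a morphism $f$ in $\alg(S,\sO)$ to be a weak equivalence or fibration if and only if $Uf$ is a Dwyer-Kan equivalence or an isofibration in the Dwyer-Kan structure on $\Catm$ of Lurie--Muro. Cofibrations are then defined by left lifting property. The task is to verify that this triple satisfies the model category axioms.

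First I would assemble generating sets. The generating cofibrations are obtained by applying $F$ to the generating cofibrations of $\M$, indexed over all arity profiles $\ba$ in $\sofc$ for every finite color set $\fC$ parametrized by $S$. The generating acyclic cofibrations come in two flavors: those obtained from generating acyclic cofibrations of $\M$ via $F$, and a small set of ``interval-type'' generators pulled back through the augmentation that force the isofibration condition on $U$. Because $\M$ is convenient, the underlying category $\alg(S,\sO)$ is locally presentable and admits the small object argument, yielding the lifting and factorization axioms at the formal level.

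The heart of the proof is that relative cell complexes built from the generating acyclic cofibrations are weak equivalences. I would attack this by filtering a pushout along a free acyclic-cofibration generator into a transfinite tower of stages indexed by arity, as is standard in operad theory: each stage is a pushout in $\M$ of a map built from symmetric-group coinvariants of iterated tensor products of the attaching data decorated by profiles in $\sofc$. Invoking the axioms of a convenient model category (projective cofibrancy over finite groups, pushout-product compatibility, and a monoid-axiom-style condition), each such stage is a weak equivalence in $\M$, so the transfinite composite lands in a weak equivalence; applying $U$ then gives a Dwyer-Kan equivalence in $\Catm$ because the augmentation is compatible with these arity filtrations. The augmentation hypothesis is used exactly to identify the bottom of the filtration with a pushout on the enriched-category side, so that essential surjectivity on $\pi_0$ can be checked there.

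The main obstacle is this last step: controlling the arity-indexed filtration of pushouts of free maps and showing that the resulting transfinite composites map to Dwyer-Kan equivalences under $U$. This is where the full strength of ``augmented'' enters, since without a clean comparison to $\Catm$ one has no way to check essential surjectivity on homotopy categories; and it is where ``convenient'' enters, since one needs the Schwede--Shipley-style monoid axiom together with cofibrant behavior under finite-group coinvariants to ensure that the intermediate pushouts in $\M$ are homotopically well behaved. Once this technical core is in place, verifying the remaining axioms (2-out-of-3, retracts, and the lifting of trivial cofibrations against fibrations) is formal, and the theorem follows.
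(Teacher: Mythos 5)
Your plan has a fundamental flaw at the very first step: defining weak equivalences and fibrations in $\algso$ by requiring $Uf$ to be a Dwyer-Kan equivalence (resp.\ fibration) in $\Catm$, where $U\colon \algso\to\Catm$ is the underlying-enriched-category functor. The functor $U$ throws away all entries of arity profile $\neq(1,1)$, so your proposed weak equivalences impose no condition whatsoever on the non-categorical entries. Concretely, a map of, say, colored operads that is a Dwyer-Kan equivalence on the underlying categories but arbitrary on all higher arities would be declared a weak equivalence — and that is not the intended homotopy theory. The paper explicitly flags this: the Dwyer-Kan weak equivalences and fibrations in $\algso$ \emph{cannot} be created by the forgetful functor to $\Catm$, because $\Catm$ does not have enough entries.

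The paper's remedy is to interpolate: it first constructs the Dwyer-Kan model structure on $\algsass$, the category of $\M$-enriched categories with $S$-entries (Theorem \ref{algass.dk.model}), where a weak equivalence (resp.\ fibration) is a map that is \emph{entrywise} a weak equivalence (resp.\ fibration) in $\M$ on all $S$-profiled entries \emph{and} whose restriction to the categorical part is a Dwyer-Kan equivalence (resp.\ fibration) in $\Catm$. This model structure is not lifted from $\Catm$; it is built directly by the recognition theorem. Only then is the model structure on $\algso$ lifted along the augmentation adjunction $\alpha_!\colon\algsass\adjoint\algso:\alpha^*$. Moreover, the lifting is carried out not by the arity-indexed filtration of pushouts of free cells that you sketch, but by a path-object criterion (Lemma \ref{model.lift}): one needs a fibrant replacement functor in $\algso$ (obtained from a lax symmetric monoidal fibrant replacement functor in $\M$, Lemma \ref{algso.fib.replace}) and path objects for fibrant algebras (obtained from a cocommutative comonoidal interval in $\M$ together with the isofibration axiom, Lemma \ref{algso.path}), both preserved by $\alpha^*$. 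Your filtration strategy is the kind of analysis used for transferred model structures on operad algebras over a \emph{fixed} color set, and it could in principle be made to work here, but it would be considerably more delicate; the paper deliberately sidesteps it. The first priority for repairing your argument is to replace $\Catm$ by $\algsass$ as the base for transfer, with the correct entrywise-plus-categorical weak equivalences.
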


For example, as we will see in Example \ref{ex:gprop.dwyer.kan}, using appropriately chosen augmented operadic collections, we obtain Dwyer-Kan model category structure on the categories of enriched (wheeled) props, (wheeled) properads, and (wheeled) operads.  For enriched operads, properads, and props, our results recover known model category structure mentioned above.  For enriched wheeled operads, wheeled properads, and wheeled props, our results are new.  Furthermore, this Dwyer-Kan model category structure is cofibrantly generated in which trivial fibrations and fibrant objects are easy to describe.  In fact, a map is a trivial fibration if and only if it is entrywise a trivial fibration that is surjective on colors.  An object is fibrant if and only if it is entrywise fibrant.

In the previous theorem, due to its generality, the underlying category has to be suitably restricted to what we call a convenient model category (Def. \ref{def:convenient}).  Examples include the familiar model categories of simplicial sets, of chain complexes and simplicial modules over a field of characteristic zero, and of small categories with the folk model structure.

Our strategy for proving Theorem \ref{first.main.theorem} is different from the existing strategy for proving the existence of the Dwyer-Kan model category structure on enriched categories, operads, and props mentioned above.  Given an augmented operadic collection $(S,\sO)$ with augmentation $\alpha$, there are two adjunctions
\[\nicexy{\Catm \ar@<2pt>[r]^-{\text{add $\varnothing$}} &\algsass \ar@<2pt>[l]^-{(-)^{\cat}} \ar@<2pt>[r]^-{\alpha_!} & \algso \ar@<2pt>[l]^-{\alpha^\ast}}\]
in which $\M$ is the underlying category and $\Catm$ is the category of small $\M$-enriched categories (Example \ref{ex:enriched.cat}).  It is important to note that the Dwyer-Kan weak equivalences and fibrations in $\algso$ cannot be defined by the forgetful functor to $\Catm$ because the latter does not have enough entries. The middle category $\algsass$ is what we call the category of \emph{$\M$-enriched categories with $S$-entries} (Example \ref{ex:enriched.cat.extra}).  It is an interpolation between the other two categories such that the Dwyer-Kan weak equivalences and fibrations in $\algso$ can be defined in $\algsass$.  The adjunction on the left consists of adding initial objects for the extra entries and of concentrating on the categorical part \eqref{enriched.cat.adjoint}.  The adjunction on the right is induced by the augmentation $\alpha$ (Prop. \ref{opcoll.map.leftadj}).  As a preliminary step, in Theorem \ref{algass.dk.model} we will show that, under suitable assumptions on $\M$, $\algsass$ admits the Dwyer-Kan model category structure, which is \emph{not} lifted from $\Catm$.  The Dwyer-Kan model category structure in Theorem \ref{first.main.theorem} is then obtained from $\algsass$ by lifting via the adjunction $(\alpha_!,\alphastar)$.

Our second main result is about the homotopy invariance of the Dwyer-Kan homotopy theory in the previous theorem.  The idea is that, under suitable conditions, a Quillen equivalence between the underlying categories should lift to a Quillen equivalence between the algebra categories.  In the case of monoids, such a homotopy invariance result goes back to \cite{ss03} (3.12).  For small enriched categories, a result along the same lines is \cite{muro15} (1.4).  For algebras over colored operads with a fixed color set, such a homotopy invariance result is in \cite{white-yau2}; see Theorem \ref{fixed.color.lifting}.  Here we extend it to the categories of algebras over augmented operadic collections.  The following observation is our second main result.  It will appear as Theorem \ref{lifting.qeq} below.

\begin{theorem}
Suppose:
\begin{itemize}
\item $L : \M \adjoint \N : R$ is a weak symmetric monoidal Quillen equivalence (Def. \ref{quillen.equivalence}) between convenient model categories (Def. \ref{def:convenient}) such that every generating cofibration in $\M$ has cofibrant domain.
\item $(S,\sO,\alphaone)$ is an augmented operadic collection in $\M$ (Def. \ref{def:aug.opcoll}), and $(S,\sP,\alphatwo)$ is an augmented operadic collection in $\N$ with the same $S$.
\item $\alpha : (S,\sO) \to (S,R\sP)$ is a map of operadic collections in $\M$ that is compatible with the augmentations. 
\item For each set $\fC$, the entrywise adjoint of the map $\alphac : \osubc \to R\psubc$ is an entrywise weak equivalence in $\N$.
\end{itemize}
Suppose further that one of the following two conditions holds:
\begin{enumerate}
\item $(L,R)$ is a nice Quillen equivalence (Def. \ref{def:nice.qeq}), and both operadic collections $(S,\sO)$ and $(S,\sP)$ are entrywise cofibrant (Def. \ref{def:sigma.cofibrant}).
\item Both $(S,\sO)$ and $(S,\sP)$ are $\Sigma$-cofibrant.
\end{enumerate}
Then there is an induced Quillen equivalence
\[\nicexy{\algso \ar@<2.5pt>[r]^-{\Lbar} & \algsp. \ar@<2.5pt>[l]^-{\Ralpha}}\]
\end{theorem}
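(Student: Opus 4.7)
The plan is to build the adjoint pair $(\Lbar, \Ralpha)$, verify it is a Quillen adjunction using the explicit description of fibrations and trivial fibrations from Theorem \ref{algso.dkmodel}, and then reduce the check that it is a Quillen equivalence, color-set by color-set, to the fixed-color lifting Theorem \ref{fixed.color.lifting}.

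First I would construct the adjoint pair. The functor $\Ralpha$ is defined on $Y \in \algsp$ by applying $R$ entrywise and precomposing the structure maps with $\alpha : \sO \to R\sP$ (using the lax monoidal structure of $R$). Its left adjoint $\Lbar$ exists by the adjoint functor theorem, combining base change along $L$ with pushforward along $\alpha$, analogously to the construction of $\alpha_!$ in Proposition \ref{opcoll.map.leftadj}. Compatibility of $\alpha$ with the augmentations ensures that both $\Lbar$ and $\Ralpha$ act as the identity on the underlying color set of an algebra. To see $(\Lbar, \Ralpha)$ is a Quillen adjunction, recall from Theorem \ref{algso.dkmodel} that a fibration in the Dwyer-Kan structure on $\algsp$ is an entrywise fibration in $\N$, and a trivial fibration is an entrywise trivial fibration that is surjective on colors. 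Because $R$ is right Quillen, it preserves entrywise (trivial) fibrations; because $\Ralpha$ leaves color sets unchanged, it trivially preserves surjectivity on colors; so $\Ralpha$ is right Quillen.

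For the equivalence, I would show that the derived unit $\eta_X : X \to \Ralpha Q(\Lbar X)$ is a Dwyer-Kan weak equivalence for every cofibrant $X \in \algso$. On color sets $\eta_X$ is the identity on the color set $\fC$ of $X$, so the essential-surjectivity component of a Dwyer-Kan weak equivalence is automatic, and it suffices to check that $\eta_X$ is entrywise a weak equivalence in $\M$. Viewing $X$ as an $\osubc$-algebra in $\M$ and $Q(\Lbar X)$ as a $\psubc$-algebra in $\N$, this entrywise statement is precisely the derived-unit condition for the fixed-color adjunction induced by $(L,R)$ and $\alphac$ on
\[
\alg(\osubc;\M) \adjoint \alg(\psubc;\N).
\]
Under either condition (1) or (2) of the theorem, together with the entrywise weak-equivalence hypothesis on the adjoint of $\alphac$, this is exactly the content of Theorem \ref{fixed.color.lifting}.

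The main obstacle is ensuring that restriction to a fixed color set turns a cofibrant object in the Dwyer-Kan model structure on $\algso$ into a cofibrant (or at least homotopically sufficiently cofibrant) algebra over the fixed-color operad $\osubc$, so that Theorem \ref{fixed.color.lifting} applies. The generating cofibrations of the Dwyer-Kan structure split into color-enlarging generators (used to attain the correct color set $\fC$) and fixed-color entry-filling generators (which agree with the generating cofibrations of $\alg(\osubc;\M)$). The delicate point is to show that after restricting a cellular presentation of $X$ to the color set $\fC$, only the entry-filling part effectively contributes, producing a genuine cofibrant presentation of $X$ in $\alg(\osubc;\M)$. This is where the convenient model category hypotheses and the $\Sigma$-cofibrancy (or nice-Quillen-equivalence plus entrywise cofibrancy) assumption of (1) or (2) feed into the argument to guarantee that the reduction to the fixed-color setting, and thus the invocation of Theorem \ref{fixed.color.lifting}, goes through cleanly.
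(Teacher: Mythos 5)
Your high-level strategy --- reduce to the fixed-color result, Theorem \ref{fixed.color.lifting}, by restricting to a fixed color set and invoking the cofibrancy of $A \in \alg(\osubc)$ for a cofibrant $(\fC,A) \in \algso$ --- is indeed the paper's strategy.  However, there are two genuine gaps in your argument.

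First, your verification that $(\Lbar,\Ralpha)$ is a Quillen adjunction is wrong as stated.  You claim ``a fibration in the Dwyer-Kan structure on $\algsp$ is an entrywise fibration in $\N$,'' but that is not what Theorem \ref{algso.dkmodel} says: a fibration is an entrywise (local) fibration \emph{that is also a homotopical isofibration} (Def.~\ref{def:algso.weq.fib} and Def.~\ref{def:dk-weakeq-fib}); only trivial fibrations and fibrant objects admit the purely entrywise description.  Therefore showing that $\Ralpha$ preserves entrywise fibrations is not enough.  The paper handles this in Lemma~\ref{lifting.qadj} by invoking Dugger's criterion (a right adjoint is right Quillen iff it preserves all trivial fibrations and fibrations between fibrant objects) and then using the $\pi_0$ computation \eqref{pizero.rb}, which relies on the weak equivalence $LQ\tensorunitm \to \tensorunitn$ and the Quillen-adjunction derived bijections, to see that $\Ralpha$ preserves homotopical isofibrations between fibrant objects.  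Your sketch skips this.

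Second, for the Quillen equivalence you verify only the derived-unit condition.  On its own this does not characterize a Quillen equivalence: one also needs either that $\Ralpha$ reflects weak equivalences between fibrant objects (Hovey~1.3.16) or the derived counit condition, neither of which you address.  Checking the counit is delicate here because cofibrant replacement in $\algso$ is allowed to change the color set, so the clean ``identity on colors'' reduction you use for the unit does not immediately transfer.  The paper avoids this by using the other standard characterization directly: it takes a map $\varphi : (\fC,A) \to \Ralpha(\fD,B)$ with $(\fC,A)$ cofibrant and $(\fD,B)$ fibrant, factors both $\varphi$ and its adjoint $\varphibar$ via the canonical color-set factorization \eqref{f.factorization}, matches the essential-surjectivity parts via the $\pi_0$ identification, and reduces the entrywise part to the fixed-color Quillen equivalence $(\Lbarc,\Ralphac)$ from Theorem~\ref{fixed.color.lifting}, which applies because $A \in \alg(\osubc)$ is cofibrant (Lemma~\ref{cofibrant.algebra}, whose proof is a short lifting argument, considerably slicker than the cell-attachment analysis you sketch) and $\varphibar^\ast B \in \alg(\psubc)$ is fibrant.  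You should adopt that factorization route, or else explicitly verify that $\Ralpha$ reflects weak equivalences between fibrant objects.
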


For example, as we will explain in Section \ref{operad.like.lifting}, if $(L,R)$ is a nice Quillen equivalence, then there are induced Quillen equivalences between categories of enriched (wheeled) props, (wheeled) properads, and (wheeled) operads.  Examples of nice Quillen equivalences, which appear in (1) above, include the Dold-Kan correspondence for a field of characteristic zero and the analogous adjunction for reduced rational simplicial and chain complexes of Lie algebras \cite{quillen} (p.211).

The rest of this paper is organized as follows.  In Section \ref{sec:colored} we recall some basic concepts regarding colored operads.  In Section \ref{sec:operadic.collections} we define operadic collections, maps between them, and their algebras, and discuss a series of examples.  In Section \ref{sec:dk.enriched.cat} we recall the Dwyer-Kan homotopy theory of small enriched categories from \cite{bm13,muro15}.  In Section \ref{sec:dk.enrichedcat.extra} we extend this to the Dwyer-Kan homotopy theory of small enriched categories with extra entries.  Our two main results are proved in Section \ref{sec:alg.opcoll} and Section \ref{sec:lifting.quillen}, respectively.

\subsection*{Acknowledgment}
The author would like to thank David White for reading an earlier draft of this paper and for his many useful comments.  The author would also like to thank Philip Hackney for pointing out an inaccuracy in an earlier version of this paper.

\section{Colored Operads with Fixed Colors}
\label{sec:colored}

In this section we recall some results regarding colored operads for a fixed color set.  Throughout this section $(\M, \otimes, \tensorunit)$ denotes a bicomplete (i.e., has all small limits and colimits) symmetric monoidal closed category with initial object $\varnothing$.   The monoidal unit will be written as $\tensorunitm$ if we need to emphasize $\M$.

\subsection{Colors and Profiles}

Here we recall from \cite{jy2} some notations regarding colors that are needed to talk about colored objects.

\begin{definition}[Colored Objects]
\label{def:profiles}
Fix a non-empty set $\fC$, whose elements are called \emph{colors}.
\begin{enumerate}
\item
A \emph{$\fC$-profile} is a finite sequence of elements in $\fC$, say,
\[\uc = (c_1, \ldots, c_m)\]
with each $c_i \in \fC$.  The set of $\fC$-profiles is denoted $\profc$. The empty $\fC$-profile is denoted $\emptyset$, which is not to be confused with the initial object in $\calm$.  Write $|\uc|=m$ for the \emph{length} of a profile $\uc$.
\item
An object in the product category $\prod_{\fC} \calm = \calm^{\fC}$ is called a \emph{$\fC$-colored object in $\calm$}, and similarly for a map of $\fC$-colored objects.  A typical $\fC$-colored object $X$ is also written as $\{X_a\}$ with $X_a \in \calm$ for each color $a \in \fC$.
\end{enumerate}
\end{definition}

Next we define the colored version of a $\Sigma$-object, also known as a symmetric sequence.

\begin{definition}[Colored Symmetric Sequences]
\label{def:colored-sigma-object}
Fix a non-empty set $\fC$.
\begin{enumerate}
\item
If $\ua = (a_1,\ldots,a_m)$ and $\ub$ are $\fC$-profiles, then a \emph{left permutation} $\sigma : \ua \to \ub$ is a permutation $\sigma \in \Sigma_{|\ua|}$ such that
\[\sigma\ua = (a_{\sigma^{-1}(1)}, \ldots , a_{\sigma^{-1}(m)}) = \ub\]
This necessarily implies $|\ua| = |\ub| = m$.
\item
The \emph{groupoid of $\fC$-profiles}, with left permutations as the isomorphisms, is denoted by $\pofc$.  The opposite groupoid $\pofcop$ is regarded as the groupoid of $\fC$-profiles with \emph{right permutations}
\[\ua\sigma = (a_{\sigma(1)}, \ldots , a_{\sigma(m)})\]
as isomorphisms.
\item
The \emph{orbit} of a profile $\ua$ is denoted by $[\ua]$.  The maximal connected sub-groupoid of $\pofc$ containing $\ua$ is written as $\sigmabra$.  Its objects are the left permutations of $\ua$.  There is a decomposition
\begin{equation}
\label{pofcdecomp}
\pofc \cong \coprod_{[\ua] \in \pofc} \sigmabra,
\end{equation}
where there is one coproduct summand for each orbit $[\ua]$ of a $\fC$-profile.  By $[\ua] \in \pofc$ we mean that $[\ua]$ is an orbit in $\pofc$.
\item
Define the diagram category
\begin{equation}\label{symmetric.sequences}
\symseqcm = \calm^{\sigmacopc},
\end{equation}
whose objects are called \emph{$\fC$-colored symmetric sequences}.  By the decomposition \eqref{pofcdecomp}, there is a decomposition
\[\symseqcm \cong 
\prod_{\dbrch \in \sigmacopc} \calm^{\sigmabrcopd},\]
where $\sigmabrcopd \cong \sigmaopc$.  
\item
For $X \in \symseqcm$, we write
\[X\singledbrc \in \calm^{\sigmabrcopd} \cong \calm^{\sigmabrcop}\]
for its $\dbrch$-component.  For $\duc \in \profcc$ (i.e., $\uc$ is a $\fC$-profile and $d \in \fC$), we write
\[X\duc \in \calm\]
for the value of $X$ at $\duc$.  In what follows, we think of the profile $\uc \in \profc$ as parametrizing the inputs, while $d \in \fC$ is the color of the output.
\end{enumerate}
\end{definition}

\subsection{Colored Circle Product}

We will define $\fC$-colored operads as monoids with respect to the $\fC$-colored circle product.  To define the latter, we need the following definition.

\begin{definition}
\label{def:tensorover}
Suppose $\cald$ is a small groupoid, $X \in \calm^{\caldop}$, and $Y \in \calm^{\cald}$.  Define the object $X \otimes_{\cald} Y \in \calm$ as the colimit of the composite
\[\nicexy{\cald \ar[r]^-{\cong \Delta} 
& \caldop \times \cald \ar[r]^-{(X,Y)}
& \calm \times \calm \ar[r]^-{\otimes}
& \calm,}\]
where the first map is the diagonal map followed by the isomorphism $\cald \times \cald \cong \caldop \times \cald$.
\end{definition}

We will mainly use the construction $\otimes_{\cald}$ when $\cald$ is the finite connected groupoid $\sigmabrc$ for some orbit $[\uc] \in \pofc$.

\begin{convention}
For an object $A \in \calm$, $A^{\otimes 0}$ means $\tensorunit$, the monoidal unit in $\calm$.
\end{convention}

\begin{definition}[Colored Circle Product]
\label{def:colored-circle-product}
Suppose $X,Y  \in \symseqcm$, $d \in \fC$, $\uc = (c_1,\ldots,c_m) \in \profc$, and $[\ub] \in \pofc$ is an orbit.
\begin{enumerate}
\item
Define the object
\[Y^{\uc} \in \calm^{\pofcop} \cong \prod_{[\ub] \in \pofc} \calm^{\sigmabrbop}\]
as having the $[\ub]$-component
\[Y^{\uc}([\ub]) =
\coprod_{\substack{\{[\ub_j] \in \pofc\}_{1 \leq j \leq m} \,\mathrm{s.t.} \\
[\ub] = [(\ub_1,\ldots,\ub_m)]}} 
\Kan^{\sigmabrbop} 
\left[\bigotimes_{j=1}^m Y \cjbrbj\right] 
\in \calm^{\sigmabrbop}.\]
The above left Kan extension is defined as
\[\nicexy{\prod_{j=1}^m \sigmabrbjop 
\ar[d]_-{\mathrm{concatenation}} 
\ar[rr]^-{\prod Y \binom{c_j}{-}} 
&& \calm^{\times m} \ar[d]^-{\otimes}\\
\sigmabrbop \ar[rr]_-{\Kan^{\sigmabrbop}\left[\otimes Y(\vdots)\right]}^-{\mathrm{left ~Kan~ extension}}  && \calm.}\]
\item
By allowing left permutations of $\uc$ above, we obtain
\[Y^{[\uc]} \in \calm^{\pofcop \times \sigmabrc} \cong \prod_{[\ub] \in \pofc} \calm^{\sigmabrbop \times \sigmabrc}\]
with components
\[Y^{[\uc]}([\ub]) \in \calm^{\sigmabrbop \times \sigmabrc}.\]
\item 
The \emph{$\fC$-colored circle product}
\[X \circ Y \in \symseqcm\]
is defined to have components
\[(X \circ Y)\singledbrb 
= \coprod_{[\uc] \in \pofc} 
X\singledbrc \tensorover{\sigmabrc} Y^{[\uc]}([\ub]) \in \calm^{\sigmaopb \times \{d\}}\]
for $d \in \fC$ and $[\ub] \in \pofc$, where the coproduct is indexed by all the orbits in $\pofc$.
\end{enumerate}
\end{definition}

\begin{proposition}
\label{circle-product-monoidal}
With respect to the $\fC$-colored circle product $\circ$, $\symseqcm$ is a monoidal category.
\end{proposition}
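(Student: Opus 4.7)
The plan is to exhibit the three ingredients of a monoidal structure on $\symseqcm$ -- unit, associator, and unitors -- and then verify the pentagon and triangle coherence axioms.

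First, I would identify the monoidal unit as the $\fC$-colored symmetric sequence $\sI$ with $\sI\singledbrc = \tensorunitm$ when $\uc = (d)$ is the length-one profile on the output color $d$, and $\sI\singledbrc = \varnothing$ otherwise. Using the defining formula for $\circ$, the unitor $\sI \circ X \cong X$ arises because only the summand indexed by $[\uc] = [(d)]$ is non-initial; then $\sigmabrc$ is trivial, the left Kan extension is along the identity, and the result collapses to $X\singledbrb$. The unitor $X \circ \sI \cong X$ arises because the only decomposition $[\ub] = [(\ub_1, \ldots, \ub_m)]$ contributing a non-initial term to $\sI^{[\uc]}([\ub])$ forces each $\ub_j = (c_j)$, so $[\ub] = [\uc]$, and the Kan extension then yields $X\singledbrc$ after reindexing. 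Bifunctoriality of $\circ$ is immediate from the functoriality of coproducts, tensor products, and left Kan extensions.

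Next I would construct the associator $(X\circ Y)\circ Z \cong X \circ (Y\circ Z)$. Expanding both sides, each becomes a coproduct indexed by compatible triples of orbits $([\uc], ([\ub_j])_j, ([\ua_{j,l}])_{j,l})$ of $\fC$-profiles with decompositions, of iterated tensor products of evaluations of $X$, $Y$, $Z$, all left Kan extended into $\sigmabra^{\smallop}$ along an iterated concatenation functor. The identification of the two indexings uses the associativity of concatenation of profiles, while the equivariance matches because iterated wreath products of symmetric groupoids are associative up to canonical isomorphism. That left Kan extensions compose then produces the associator as a canonical natural isomorphism.

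The main obstacle is the pentagon axiom, which at face value requires tracking four-fold iterated Kan extensions over wreath products of profile groupoids. A clean way to finish is to observe that the $\fC$-colored circle product is the orbit-by-orbit slicing of the classical substitution product on $\Sigma$-objects (as in Kelly and Joyal), whose monoidal coherence is standard; the colored pentagon then reduces, orbit by orbit, to the uncolored one. The triangle axiom is then a direct check against the explicit form of the unitors above.
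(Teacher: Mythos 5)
The paper does not carry out a proof of this proposition: it simply cites White--Yau (3.2.18), with one-colored antecedents in Rezk's thesis (2.2.6) and Harper (4.21). Your proposal instead attempts the direct verification that those sources contain, so in that narrow sense the route is different (a worked argument rather than a deferral), but in substance you are reconstructing the standard argument the paper is outsourcing.

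Within that reconstruction, the identification of the unit $\sI$, the analysis of the unitors, and the sketch of the associator via associativity of profile concatenation and iterated left Kan extensions are all on the right track. The weak point is the final step: the claim that ``the colored pentagon reduces, orbit by orbit, to the uncolored one'' is not a literal reduction. A fixed output orbit $[\ub]$ receives contributions from a coproduct over \emph{all} input orbits $[\uc]$ and \emph{all} compatible decompositions $[\ub]=[(\ub_1,\dots,\ub_m)]$, and the equivariance lives in iterated wreath products $\sigmabrc \wr (\sigmabrbone \times \cdots)$ that do not decouple into a product of single-orbit problems. So one cannot simply invoke Kelly--Joyal coherence for uncolored symmetric sequences one orbit at a time; either you verify the pentagon (and triangle) directly by the same kind of explicit bookkeeping you used for the associator, or you realize $(\symseqcm,\circ,\sI)$ as (a full subcategory of) a strict monoidal category of endofunctors/monads on $\calmc$ and transfer coherence from there, which is effectively what the cited references do. As written, that final paragraph is a gap in the argument, not merely a compression of it.
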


\begin{proof}
This is 3.2.18 in \cite{white-yau}.  In the one-colored case, this is first proved in \cite{rezk.thesis} (2.2.6) and also in \cite{harper-jpaa} (4.21).
\end{proof}

\begin{definition}\label{def:colored-operad}
For a set $\fC$ of colors, the category of \emph{$\fC$-colored operads} in $\calm$, denoted $\operadcm$, is defined as the category of monoids \cite{maclane} (VII.3) in the monoidal category $\bigl(\symseqcm, \comp\bigr)$.
\end{definition}

The reader is referred to \cite{yau16} (Ch. 11) for an explicit description of the structure maps of a $\fC$-colored operad.

\subsection{Algebras over a Colored Operad}

We will regard $\calmc$ as a full subcategory of $\symseqcm$ \eqref{symmetric.sequences} by identifying $\{X_c\}_{c\in \fC} \in \calmc$ with the $\fC$-colored symmetric sequence with entries
\[X\duc = \begin{cases} X_d & \text{if $\uc = \varnothing$},\\
\varnothing & \text{otherwise}.\end{cases}\]

\begin{definition}\label{colored-operad-algebra}
Suppose $\sO$ is a $\fC$-colored operad.  The category of algebras over the monad \cite{maclane} (VI.2)
\[\sO \comp - : \calmc \to \calmc\]
is denoted by $\alg(\sO)$, whose objects are called \emph{$\sO$-algebras} in $\calm$ \cite{yau16} (Def. 13.2.3).
\end{definition}

\begin{definition}\label{def:asubc}
Suppose $A = \{A_c\}_{c\in \fC} \in \calm^{\fC}$ and $\uc = (c_1,\ldots,c_n) \in \pofc$ with orbit $[\uc]$.  Define the object
\[A_{\uc} = \bigotimes_{i=1}^n A_{c_i} = A_{c_1} \otimes \cdots \otimes A_{c_n} \in \calm\]
and the diagram $A_{\smallbrc} \in \calm^{\sigmabrc}$ with values
\[A_{\smallbrc}(\uc') = A_{\uc'}\]
for each $\uc' \in [\uc]$.  All the structure maps in the diagram $A_{\smallbrc}$ are given by permuting the factors in $A_{\uc}$.
\end{definition}

There is a free-forgetful adjoint pair
\[\nicexy{\calmc \ar@<2pt>[r]^-{\sO \comp -} 
& \alg(\sO) \ar@<2pt>[l]^-{U}}\]
for each $\fC$-colored operad $\sO$.  When drawing an adjunction, our convention is to draw the left adjoint on top.

\begin{proposition}\label{algebra-bicomplete}
Suppose $\sO$ is a $\fC$-colored operad in $\M$.  Then the category $\alg(\sO)$ has all small limits and colimits, with reflexive coequalizers and filtered colimits preserved and created by the forgetful functor $\alg(\sO) \to \calmc$.
\end{proposition}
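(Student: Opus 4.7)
The plan is to use standard monadic category theory together with a direct analysis of how the $\fC$-colored circle product interacts with reflexive coequalizers and filtered colimits. The category $\alg(\sO)$ is by definition the category of algebras over the monad $T = \sO \comp - : \calmc \to \calmc$, so the forgetful functor $U : \alg(\sO) \to \calmc$ is monadic. Since $\calmc = \M^{\fC}$ is bicomplete (because $\M$ is), the forgetful functor automatically creates all small limits.

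For reflexive coequalizers and filtered colimits, the first step is to show that the monad $T$ preserves them. Inspecting Definition \ref{def:colored-circle-product}, the functor $Y \mapsto (\sO \comp Y)\singledbrb$ is assembled from the following operations applied to the entries of $Y$: finite tensor products $\bigotimes_{j=1}^m Y\cjbrbj$, left Kan extensions along concatenation functors of groupoids, coproducts indexed by decompositions of orbits, and a colimit $\tensorover{\sigmabrc}$ over a finite connected groupoid. Because $\M$ is closed symmetric monoidal, $- \otimes X$ is a left adjoint for each $X$ and so preserves all colimits in each variable; consequently, finite tensor powers preserve reflexive coequalizers (reflexive coequalizers commute with finite products of themselves in any category with sufficient colimits), and they preserve filtered colimits (using that the diagonal functor $I \to I \times I$ is cofinal when $I$ is filtered). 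The remaining operations (coproducts, Kan extensions, and $\tensorover{\sigmabrc}$) are colimits and so commute with any other colimit. Assembling these observations shows that $T$ preserves reflexive coequalizers and filtered colimits in $\calmc$.

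Once $T$ preserves these colimits, a standard result in monad theory (see, e.g., Borceux, \emph{Handbook of Categorical Algebra} II, 4.3.2) implies that $U : \alg(\sO) \to \calmc$ creates reflexive coequalizers and filtered colimits; this is the stated preservation/creation statement. Finally, to obtain all small colimits in $\alg(\sO)$, one applies the classical Linton theorem: for a monad on a cocomplete category, if the category of algebras admits reflexive coequalizers, it is cocomplete. Concretely, coproducts of $\sO$-algebras are constructed as reflexive coequalizers of free-algebra presentations, and general colimits are built from coproducts and coequalizers in the usual way.

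The main obstacle is the preservation claim for the monad $T$ in the second paragraph. The formal structure is standard, but one must be careful with the filtered-colimit case, where the only nontrivial point is the commutation of finite tensor powers with filtered colimits; this rests on the cofinality of the diagonal in a filtered category together with the fact that $\otimes$ is cocontinuous in each variable. Once this is in hand, the remainder of the argument is purely formal bookkeeping via monadic category theory.
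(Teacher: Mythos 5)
Your argument is correct, and it is precisely the argument found in the references the paper cites for this statement (Rezk's thesis 2.3.5, Harper 5.15, and the colored version in White--Yau 4.2.1): monadicity gives creation of limits, the circle product is shown to preserve reflexive coequalizers and filtered colimits by decomposing it into tensor products, Kan extensions and coproducts, and Linton's theorem then supplies cocompleteness. The paper itself gives no argument and simply cites those sources, so your proposal fills in the same proof they contain.
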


\begin{proof}
This is 4.2.1 in \cite{white-yau}.  In the one-colored case, this is first proved in \cite{rezk.thesis} (2.3.5) and also in \cite{harper-jpaa} (5.15).
\end{proof}

\section{Operadic Collections}
\label{sec:operadic.collections}

In this section we define operadic collections, our main objects of study, and discuss a series of examples of interest.  Throughout this section $(\M, \otimes, \tensorunit)$ denotes a bicomplete symmetric monoidal closed category with initial object $\varnothing$.  Recall that $\profc$ is the set of $\fC$-profiles for a set $\fC$ (Def. \ref{def:profiles}).

\subsection{Colored Operads with Varying Colors}

\begin{definition}\label{def:operad.change.color}
Suppose $\sP$ is a $\fD$-colored operad in $\M$ and $f : \fC \to \fD \in \set$.  Then $f^*\sP$ is the $\fC$-colored operad with entries
\[(f^*\sP)\duc = \sP\fduc\]
for $\duc \in \profcc$ and structure maps uniquely determined by those of $\sP$.  Here $\fduc \in \profdd$ is the result of applying $f$ entrywise to $\duc$. 
\end{definition}

\begin{definition}\label{def:all.operads}
Define the category of \emph{colored operads in $\M$}, denoted $\operadm$, as follows.
\begin{enumerate}
\item An object in $\operadm$ is a pair $(\fC,\sO)$ consisting of:
\begin{itemize}
\item a set $\fC$, called the \emph{color set}, and
\item a $\fC$-colored operad $\sO$ in $\M$ (Def. \ref{def:colored-operad}).
\end{itemize}
\item A map $f : (\fC,\sO) \to (\fD,\sP)$ in $\operadm$ consists of two maps $(f_0,f_1)$ such that:
\begin{itemize}
\item $f_0 : \fC \to \fD$ is a map of color sets.
\item For each $\duc \in \profcc$, $f_1$ is an entry map
\begin{equation}\label{operad.map.entry}
\nicexy{\sO\duc \ar[r]^-{f_1} & \sP\fdfuc \in \M}.
\end{equation}
It is required that $f_1 : \sO \to f_0^*\sP$ be a map of $\fC$-colored operads, where the $\fC$-colored operad $f_0^*\sP$ is as in Def. \ref{def:operad.change.color}.
\end{itemize}
In what follows we will usually write both $f_0$ and $f_1$ as $f$.
\end{enumerate}
\end{definition}

\begin{remark}\label{rk:operad.map.factor}
Each map $f : (\fC,\sO) \to (\fD,\sP)$ in $\operadm$ has a canonical factorization
\[\nicexy{(\fC,\sO) \ar[r]^-{g} \ar `u[rr] `[rr]|-{f} [rr] & (\fC,f^*\sP) \ar[r]^-{h} & (\fD,\sP)}\]
such that:
\begin{enumerate}
\item $g$ is the identity map of $\fC$ on colors, and each entry map of $g$ is an entry map of $f$:
\[\nicexy{\sO\duc \ar[r]^-{g \,=\, f} & (f^*\sP)\duc = \sP\fduc.}\]
\item $h$ is $f : \fC \to \fD$ on colors, and each entry map of $h$ is the identity map:
\[\nicexy{(f^*\sP)\duc = \sP\fduc \ar[r]^-{=} & \sP\fduc = \sP\hduc.}\]
\end{enumerate}
This implies that properties of colored operads can often be deduced from the fixed color set case.
\end{remark}

\begin{remark}
In the literature, a colored operad in $\M$ is also called a small symmetric multicategory enriched in $\M$.  When $\M$ is the category $\sset$ of simplicial sets, $\operad(\sset)$ is called the category of small multicategories enriched in simplicial sets in \cite{robertson} and the category of simplicial operads in \cite{cm13}.
\end{remark}

\subsection{Operadic Collections and Algebras}

\begin{notation}
Suppose $\fC$ is a set and $S \subseteq \bbN^{\times 2} = \{0,1,2,\ldots\}^{\times 2}$.  Define the set
\begin{equation}\label{sofc}
\sofc = \Bigl\{\dc \in \profcsq : (|\uc|, |\ud|) \in S\Bigr\}.
\end{equation}
In writing $\dc \in \profcsq$, our convention is to regard $\uc$ (resp., $\ud$) as in the first (resp., second) copy of $\profc$, parametrizing the inputs (resp., outputs).
\end{notation}

\begin{example}
\begin{enumerate}
\item $\bbNsq(\fC) = \profcsq$.
\item If $S = \bbN \times \{1\}$, then $\sofc = \profcc$.
\item If $S = \{(1,1)\}$, then $\sofc = \csquare$.
\end{enumerate}
\end{example}

\begin{definition}\label{def:operadic.collection}
An \emph{operadic collection in $\M$} is a pair $(S, \sO)$ consisting of 
\begin{itemize}
\item a subset $S \subseteq \bbN^{\times 2}$ and
\item a functor $\sO : \set \to \operadm$
\end{itemize}
such that the following two conditions hold.  For a set $\fC$, we will write $\sO(\fC)$ as $\osubc$.
\begin{enumerate}
\item For each set $\fC$, $\osubc$ is an $\sofc$-colored operad in $\M$ (Def. \ref{def:colored-operad}).
\item For each map $f : \fC \to \fD \in \set$, the map $\sO_f : \osubc \to \osubd \in \operadm$ on color sets is given by 
\[\nicexy{\sofc \ni \dc \ar@{|->}[r] & f\dc = \fdc \in \sofd,}\]
where $f\uc \in \profd$ is the result of applying $f$ entrywise to $\uc \in \profc$ and likewise for $f\ud$.
\end{enumerate}
If $S$ is clear from the context, then we will omit it from the notation.
\end{definition}

\begin{definition}\label{def:operadic.collection.algebra}
Suppose $(S,\sO)$ is an operadic collection in $\M$.  
\begin{enumerate}
\item An \emph{$(S,\sO)$-algebra} is a pair $(\fC,A)$ consisting of a set $\fC$, called the \emph{color set}, and an $\osubc$-algebra $A$ (Def. \ref{colored-operad-algebra}).
\item Suppose $(\fC,A)$ and $(\fD,B)$ are $(S,\sO)$-algebras.  A \emph{map} $f : (\fC,A) \to (\fD,B)$ of $(S,\sO)$-algebras consists of a pair $(f_0,f_1)$ of maps such that
\begin{itemize}
\item $f_0 : \fC \to \fD$ is a map of color sets and
\item $f_1 : A \to f_0^*B$ is a map of $\osubc$-algebras.
\end{itemize}
Here $f_0^*B$ is the $\osubc$-algebra with entries
\[(f_0^*B)\dc = B\fzerodc\]
for $\dc \in \sofc$.  Its $\osubc$-algebra structure maps are the composites
\begin{equation}\label{pullback.algebra}
\nicexy@C+.5cm{\osubc\tzerotonetn \otimes (f_0^*B)(t_1) \otimes \cdots \otimes (f_0^*B)(t_n) \ar[d]_-{(\sO_{f_0},\Id)} \ar[r] & (f_0^*B)(t_0)\\ 
\osubd\fzerots \otimes B(f_0t_1) \otimes \cdots \otimes B(f_0t_n) \ar[r]^-{\osubd-\mathrm{algebra}}_-{\mathrm{structure}} & B(f_0t_0) \ar[u]_-{=}}
\end{equation}
in which $n\geq 0$, each $t_i \in \sofc$, and $f_0t_i \in \sofd$ is the result of applying $f_0$ entrywise to $t_i$.  In what follows, we will usually write both $f_0$ and $f_1$ as $f$.
\item The category of $(S,\sO)$-algebras is denoted $\alg(S,\sO)$, or $\algo$ if $S$ is clear from the context.
\end{enumerate}
\end{definition}

\begin{remark}\label{rk:grothendieck.con}
\begin{enumerate}
\item In \cite{cav15} (3.2) there is a definition similar to that of $\algso$.
\item In Definition \ref{def:operadic.collection.algebra}, $f_0^*B$ is the image of $B$ under the functor
\[\nicexy{\alg(\osubc) & \alg(\osubd) \ar[l]_-{\sO_{f_0}^*}}\]
induced by the map $\sO_{f_0} : \osubc \to \osubd \in \operadm$, 
with entries and structure maps defined as above.  Furthermore, this functor admits a left adjoint.
\item There is a functor
\[\nicexy{\operadmop \ar[r]^-{\alg} & \Cat}\]
that sends a colored operad $(\fC,\sP) \in \operadm$ to the category $\algp$ of $\sP$-algebras.  For a map $f : (\fC,\sP) \to (\fD,\sQ) \in \operadm$, the functor $f^* : \algq \to \algp$ is defined similarly to $f_0^*B$ above.   Given an operadic collection $(S,\sO)$ in $\M$, consider the composite
\begin{equation}\label{op.coll.obar}
\nicexy{\setop \ar[r]^-{\sOop} \ar `u[rr] `[rr]|-{\Obar} [rr]& \operadmop \ar[r]^-{\alg} & \Cat.}
\end{equation}
Then the category $\alg(S,\sO)$ in Def. \ref{def:operadic.collection.algebra} coincides with the Grothendieck construction of $\Obar$
\cite{maclane-moerdijk} (p.41-44).
\end{enumerate}
\end{remark}

\begin{proposition}\label{algso.bicomplete}
For each operadic collection $(S,\sO)$ in a bicomplete symmetric monoidal closed category $\M$, the category $\algso$ is bicomplete.
\end{proposition}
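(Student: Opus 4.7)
The plan is to identify $\algso$ with the Grothendieck construction of the functor
\[
\Obar : \setop \to \Cat, \qquad \fC \mapsto \alg(\osubc),
\]
as in Remark \ref{rk:grothendieck.con}(3), and then build (co)limits piece by piece using the base $\set$ together with each fiber $\alg(\osubc)$. The prerequisites I would invoke are: $\set$ is bicomplete; each fiber $\alg(\osubc)$ is bicomplete by Proposition \ref{algebra-bicomplete}; and for every $f : \fC \to \fD$ in $\set$, the pullback $\sO_f^* : \alg(\osubd) \to \alg(\osubc)$ is a right adjoint, noted in Remark \ref{rk:grothendieck.con}(2).

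For limits, given $D : \sJ \to \algso$ with $D(j) = (\fC_j, A_j)$ and $D(\phi) = (f_\phi, g_\phi)$ for $\phi : j \to j'$, I first form $\fC = \lim_{\sJ} \fC_j$ in $\set$ with legs $p_j : \fC \to \fC_j$, so that $f_\phi p_j = p_{j'}$. The family $\{p_j^* A_j\}$ then assembles into a $\sJ$-diagram in $\alg(\osubc)$ via the maps $p_j^* g_\phi : p_j^* A_j \to (f_\phi p_j)^* A_{j'} = p_{j'}^* A_{j'}$. Setting $A = \lim_{\sJ} p_j^* A_j$ in $\alg(\osubc)$, which exists by Proposition \ref{algebra-bicomplete}, the pair $(\fC, A)$ equipped with the evident cone is the limit in $\algso$; its universal property follows by combining the universal properties in $\set$ (to produce the map on color sets) and in $\alg(\osubc)$ (to produce the algebra map).

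For colimits I would dually take $\fC = \colim_{\sJ} \fC_j$ in $\set$ with inclusions $i_j : \fC_j \to \fC$, so that $i_{j'} f_\phi = i_j$. Writing $(\sO_{i_j})_!$ for the left adjoint of $\sO_{i_j}^*$ from Remark \ref{rk:grothendieck.con}(2), the $\osubc$-algebras $(\sO_{i_j})_! A_j$ assemble into a $\sJ$-diagram whose transition maps come from $g_\phi$ by adjunction, using the functoriality identity $(\sO_{i_j})_! = (\sO_{i_{j'}})_! (\sO_{f_\phi})_!$. Taking $A = \colim_{\sJ} (\sO_{i_j})_! A_j$ in $\alg(\osubc)$ gives the colimit $(\fC, A)$ of $D$ in $\algso$.

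The verifications of the universal properties are routine diagram chases once the data above are in place, so I do not expect a substantial obstacle. The one fact quietly used, namely the existence of the left adjoints $(\sO_f)_!$, is exactly what is asserted in Remark \ref{rk:grothendieck.con}(2); it is a standard consequence of the factorization in Remark \ref{rk:operad.map.factor}, with the fixed-color-set case given by a reflexive-coequalizer tensor product over $\osubc$ and the change-of-colors case by a left Kan extension along $f$, both of which are available because $\M$ is bicomplete and closed.
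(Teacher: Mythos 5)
Your proof is correct and follows the same route the paper sketches: recognize $\algso$ as the Grothendieck construction of $\Obar$ over $\set$ and assemble (co)limits fiberwise. The paper's own proof simply records the inputs (bicompleteness of $\set$ and of each fiber $\alg(\osubc)$) and then defers to Harpaz--Prasma (2.4.4), whereas you carry out the standard construction explicitly: limit of color sets plus fiber limit of the restricted diagrams, colimit of color sets plus fiber colimit of the extended diagrams. The ingredients you flag — bicompleteness of the fibers from Proposition \ref{algebra-bicomplete} and the left adjoints to the restriction functors from Remark \ref{rk:grothendieck.con}(2) (which also make the restriction functors limit-preserving, a fact you use implicitly in checking the universal property of your limit) — are exactly the hypotheses of the cited general result, so your argument is a worked-out instance of it rather than a genuinely different one.
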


\begin{proof}
For each set $\fC$, the category $\alg(\osubc)$ has all small limits and colimits (Prop. \ref{algebra-bicomplete}).  Since $\set$ has all small (co)limits, so does $\algso$.  Alternatively, one could simply apply \cite{hp} (2.4.4), using the fact that $\algso$ is the Grothendieck construction of $\Obar$ \eqref{op.coll.obar}.
\end{proof}

The following observation is an immediate consequence of Def. \ref{def:operadic.collection.algebra}.

\begin{proposition}\label{algebra.map.factorization}
Suppose $(S,\sO)$ is an operadic collection in a bicomplete symmetric monoidal closed category $\M$, and $f : (\fC,A) \to (\fD,B) \in \algso$ is a map of $(S,\sO)$-algebras.  Then there is a canonical factorization
\begin{equation}\label{f.factorization}
\nicexy{(\fC,A) \ar[r]^-{g} \ar `u[rr] `[rr]|-{f} [rr] & (\fC,f^*B) \ar[r]^-{h} & (\fD,B)}
\end{equation}
in $\algso$ such that:
\begin{enumerate}
\item $f^*B \in \algosubc$ has entries
\[(f^*B)_{t} = B_{f(t)}\]
for $t \in \sofc$ and $\osubc$-algebra structure maps as in \eqref{pullback.algebra}.
\item On colors $g$ is the identity map on $\fC$, and $g : A \to f^*B \in \algosubc$ has entry maps
\[\nicexy{A_t \ar[r]^-{g \,=\, f} & (f^*B)_t = B_{f(t)}}\]
the same as $f$ for $t \in \sofc$.
\item On colors $h = f : \fC \to \fD$.  Every entry map of $h$,
\[\nicexy{(f^*B)_t = B_{f(t)} \ar[r]^-{h \,=\, \Id} & B_{f(t)}}\]
for $t \in \sofc$, is the identity map.
\end{enumerate}
\end{proposition}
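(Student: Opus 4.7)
The plan is to verify that the factorization displayed in \eqref{f.factorization} is well-defined and satisfies the stated properties, which amounts to checking that each of $g$ and $h$ is a morphism in $\algso$ and that their composition recovers $f$. This is the algebra analogue of the canonical factorization for maps of colored operads in Remark \ref{rk:operad.map.factor}, and the argument is largely formal once the construction of $f^*B$ is unpacked. The main (and only real) point is bookkeeping between the color-set data and the algebra structure.

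First I would construct $f^*B$. By Definition \ref{def:operadic.collection.algebra}, the entries $(f^*B)_t = B_{f(t)}$ for $t \in \sofc$ together with the structure maps given by \eqref{pullback.algebra} assemble into an $\osubc$-algebra, which is precisely the image of $B$ under the pullback functor $\sO_{f_0}^* \colon \alg(\osubd) \to \alg(\osubc)$ from Remark \ref{rk:grothendieck.con}(2). This already proves (1).

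Next I would define $g$ and $h$ and check that each is a morphism in $\algso$. For $g$, take the identity on the color set $\fC$, so the pullback $g^*(f^*B) = f^*B$ and the required map $A \to f^*B$ of $\osubc$-algebras is precisely the map $f_1 \colon A \to f_0^*B = f^*B$ which is part of the datum of $f$. Thus $g$ is a legitimate morphism of $(S,\sO)$-algebras, and this proves (2). For $h$, take $f_0 \colon \fC \to \fD$ on colors; the required entry map is $f^*B \to h_0^*B = f_0^*B = f^*B$, and taking it to be the identity is automatically a morphism of $\osubc$-algebras since the algebra structure on $f^*B$ is by construction pulled back from $B$ along $f_0$. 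This gives (3).

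Finally I would verify the two compositions. On color sets, $h_0 \circ g_0 = f_0 \circ \id_\fC = f_0$. On entries, the composite $A \to f^*B \to B$ is $\id \circ f_1 = f_1$, so $h \circ g = f$ in $\algso$. The factorization is canonical in the sense that both $g$ and $h$ are determined componentwise by $f$ (via $\id_\fC$ and $f_1$ on one side, and via $f_0$ and $\id$ on the other), so no choices have been made. I do not anticipate any obstacle; the statement is essentially unpacking the definition of a morphism in $\algso$ as the pair $(f_0, f_1)$ together with the functoriality of the pullback $\sO_{f_0}^*$ established in Remark \ref{rk:grothendieck.con}.
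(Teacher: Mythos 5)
Your proof is correct and matches the paper's intent: the paper states the proposition as ``an immediate consequence of Def.\ \ref{def:operadic.collection.algebra}'' without supplying a separate argument, and your write-up is precisely the unpacking of that definition (identifying $f^*B$ with $f_0^*B$, taking $g = (\id_\fC, f_1)$ and $h = (f_0, \id)$, and verifying the composite). No gap; the approach is the same.
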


\subsection{Examples of Operadic Collections}\label{sec:ex.op.coll}

\begin{example}[Underlying Objects]\label{ex:initial.collection}
Suppose $S \subseteq \bbNsq$.  For each set $\fC$, write $\sI_{\fC}$ for the initial $\sofc$-colored operad in $\M$ \cite{yau16} (11.4.1).  Its entries are either the initial object $\varnothing$ or the monoidal unit $\tensorunit$.  There is a unique operadic collection
\[(S,\sI)\]
such that for each map $f : \fC \to \fD \in \set$, the map
\[\nicexy{\sI_{\fC} \ar[r]^-{\sI_f} & \sI_{\fD} \in \operadm}\]
is entrywise either the identity map of the monoidal unit in $\M$ or the unique map from $\varnothing$ to $\varnothing$ or $\tensorunit$.

Let us write $\mtosdash$ for the category $\alg(S,\sI)$.  Explicitly, an object in $\mtosdash$ is a pair $(\fC,A)$ consisting of a set $\fC$ and an object $A \in \M^{\sofc}$, so $A$ has entries $A\dc \in \M$ for $\dc \in \sofc$.  A map
\[\nicexy{(\fC,A) \ar[r]^-{f} & (\fD,B) \in \mtosdash}\]
consists of a map $f : \fC \to \fD$ of color sets and an entry map 
\[\nicexy{A\dc \ar[r]^-{f} & B\fdc \in \M}\]
for each $\dc \in \sofc$.
\end{example}

\begin{example}[$\M$-Enriched Categories]\label{ex:enriched.cat}
For each set $\fC$, suppose $\Asc$ is the $\csquare$-colored operad such that $\alg(\Asc)$ is the category of $\M$-enriched categories with object set $\fC$ and object-preserving functors \cite{yau16} (14.4 and 20.2).  Its entries are either the initial object $\varnothing$ or a finite coproduct of the monoidal unit $\tensorunit$.  There is a unique operadic collection
\[\bigl(\{(1,1)\},\As\bigr)\]
such that for each map $f : \fC \to \fD \in \set$, the map
\[\nicexy{\Asc \ar[r]^-{\As_f} & \Asd \in \operadm}\]
is entrywise either uniquely determined by the identity map of the  monoidal unit in $\M$ or the unique map from $\varnothing$ to $\varnothing$ or $\tensorunit$.  The category $\alg\bigl(\{(1,1)\},\As\bigr)$ is exactly the category $\Catm$ of small $\M$-enriched categories \cite{borceux} (6.2). 
\end{example}

\begin{example}[$\M$-Enriched Categories with Extra Entries]\label{ex:enriched.cat.extra}
This example will play an important role in Sections \ref{sec:dk.enrichedcat.extra} and \ref{sec:alg.opcoll}.  Suppose $S \subseteq \bbNsq$ such that $(1,1) \in S$.  For each set $\fC$ define the $\sofc$-colored operad $\Ascs$ with entries
\[\Ascs\tzerotonetn = \begin{cases}
\Asc\tzerotonetn & \text{ if $|t_i| = (1,1)$ for all $i$},\\ 
\tensorunit & \text{ if $\tzerotonetn = \tzerotzero$},\\
\varnothing & \text{ otherwise}\end{cases}\]
where $n \geq 0$, each $t_i \in \sofc$, and $|t_i| = (|\uc|,|\ud|)$ if $t_i = \dc$.  The operad structure of $\Ascs$ is uniquely determined by that of $\Asc$ in Example \ref{ex:enriched.cat}.  There is a unique operadic collection
\[(S, \Ass)\]
such that for each map $f : \fC \to \fD \in \set$, the map
\[\nicexy{\Ascs \ar[r]^-{\Ass_f} & \Asds \in \operadm}\]
is entrywise either uniquely determined by the identity map of the monoidal unit in $\M$ or the unique map from $\varnothing$ to $\varnothing$ or $\tensorunit$.  Note that if $S = \{(1,1)\}$, then $(S,\Ass) = \bigl(\{(1,1)\},\As\bigr)$, the operadic collection for small $\M$-enriched categories in Example \ref{ex:enriched.cat}.

An $(S,\Ass)$-algebra is a pair $(\fC,A)$ consisting of a color set $\fC$ and an object $A \in \mtosofc$ such that:
\begin{itemize}
\item $\Acat \defn \bigl(\fC, \left\{A\dcsingle : c, d \in \fC\right\}\bigr)$ is an $\Asc$-algebra, i.e., an $\M$-enriched category with object set $\fC$;
\item There are no structure maps on the entries $A\dc$ if $(|\uc|,|\ud|) \not= (1,1)$.
\end{itemize}
We will call
\begin{itemize}
\item an $(S,\Ass)$-algebra an \emph{$\M$-enriched category with $S$-entries},
\item the $\Asc$-algebra $\Acat$ the \emph{categorical part} of $(\fC,A)$, and
\item the other entries the \emph{non-categorical part} of $(\fC,A)$. 
\end{itemize}
 According to our notational convention, in an entry $A\dcsingle \in \M$, $c$ is the input, and $d$ is the output.  So in usual categorical language, $A\dcsingle$ is the hom-object from $c$ to $d$.

Likewise, a map $f : (\fC,A) \to (\fD,B) \in \alg(S,\Ass)$ consists of
\begin{itemize}
\item a map $f : \fC \to \fD$ of color sets and
\item an entry map $f : A\dc \to B\fdc \in \M$ for each $\dc \in \sofc$ such that, when restricted to the categorical parts, these maps assemble to a functor $\fcat : \Acat \to \Bcat$ of $\M$-enriched categories.
\end{itemize}
There are no extra conditions imposed on the entry maps when restricted to the non-categorical parts.
\end{example}

\begin{example}[Operad-Like Structure]\label{ex:gprops}
The reference for this example is \cite{jy2}, to which we refer the reader for specific definitions.  For our current objective, the specific definitions of graphs are not that important.  What matters is that, in the following table, each category in the right-most column can be realized as the category of algebras of some operadic collection.  

Suppose $\G = (S,\tg)$ is a $1$-colored pasting scheme \cite{jy2} (Def. 8.2), with the corresponding $\fC$-colored version denoted $\Gc = (S,\tgc)$.  For each set $\fC$, according to \cite{jy2} (Lemma 14.4) there is an $\sofc$-colored operad $\sGc$ whose category of algebras is the category of $\Gc$-props in $\M$ (Def. 10.39).  Each entry of $\sGc$ is a coproduct of copies of the monoidal unit, indexed by the set of strict isomorphism classes of ordered graphs in $\Gc$ with the given vertex profiles and graph profiles.  There is a unique operadic collection
\[(S,\sG)\]
such that, for each map $f : \fC \to \fD \in \set$, the map
\[\nicexy{\sGc \ar[r]^-{\sG_f} & \sGd \in \operadm}\]
is induced by applying $f$ to the coloring of $\fC$-colored ordered graphs \cite{jy2} (Def. 1.20).  Here are some specific examples that extend those from \cite{jy2} (1.4.4, 10.5.3, and Ch. 11).
\begin{center}
\begin{small}
{\renewcommand{\arraystretch}{1.2} 
\begin{tabular}{|c|c|c|}\hline
$S \subseteq \bbNsq$ & $\tg$ & $\alg(S,\sG) =$ category of all small $\cdots$ in $\M$\\ \hline\hline
$\{(1,1)\}$ & unital linear graphs & enriched categories (Ex. \ref{ex:enriched.cat})\\ \hline
$\bbNsq$ & iterated graftings of corollas & vprops \cite{jy}\\ \hline 
$\bbN \times \{1\}$ & unital trees & colored operads (Def. \ref{def:all.operads})\\ \hline
$\bbNsq$ & simply-connected graphs & dioperads \cite{gan}\\ \hline
$\bbNsq$ & connected wheel-free graphs & properads\cite{vallette}\\ \hline
$\bbNsq$ & wheel-free graphs & props \cite{maclane63,maclane65}\\ \hline
$\bbN \times \{0,1\}$ & wheeled trees & wheeled operads \cite{mms}\\ \hline
$\bbNsq$ & connected wheeled graphs & wheeled properads \cite{mms}\\ \hline
$\bbNsq$ & wheeled graphs & wheeled props \cite{mms}\\ \hline
\end{tabular}}
\end{small}
\end{center}
\end{example}

\subsection{Maps of Operadic Collections}

\begin{definition}\label{def:maps.operad.coll}
Suppose $(S^1,\sO^1)$ and $(S^2,\sO^2)$ are two operadic collections in $\M$ (Def. \ref{def:operadic.collection}).  A \emph{map of operadic collections} $\alpha : (S^1,\sO^1) \to (S^2,\sO^2)$ consists of
\begin{enumerate}
\item an inclusion $S^1 \subseteq S^2 \subseteq \bbNsq$ and
\item a natural transformation
\[\nicexy{\alpha : \sO^1 \Longrightarrow \sO^2 : \set \ar[r] & \operadm}\]
such that, for each set $\fC$, the map $\alphac : \osubc^1 \to \osubc^2 \in \operadm$ on colors is given by the inclusion $S^1(\fC) \subseteq S^2(\fC)$.
\end{enumerate}
In other words, a typical entry map of $\alphac$ is a map
\[\nicexy{\osubc^1\tzerotonetn \ar[r]^-{\alphac} & \osubc^2\tzerotonetn \in \M}\]
with $n \geq 0$ and each $t_i \in S^1(\fC)$.
\end{definition}

\begin{example}\label{ex:changeS}
Suppose $S^1 \subseteq S^2 \subseteq \bbNsq$.
\begin{enumerate}
\item In the context of Example \ref{ex:initial.collection}, there is a map of operadic collections
\begin{equation}\label{initial.opcoll.s}
\nicexy{(S^1,\sI) \ar[r]^-{\iota} & (S^2,\sI)}
\end{equation}
such that, for each set $\fC$, the map $\iotac \in \operadm$ is entrywise the identity map.
\item In the context of Example \ref{ex:enriched.cat.extra}, there is a map of operadic collections
\begin{equation}\label{ass.onetwo}
\nicexy{\bigl(S^1,\As^{S^1}\bigr) \ar[r]^-{\iota} & \bigl(S^2,\As^{S^2}\bigr)}
\end{equation}
such that, for each set $\fC$, the map $\iotac \in \operadm$ is entrywise the identity map.
\end{enumerate}
\end{example}

\begin{example}\label{ex:opcoll.from.initial}
Suppose $(S,\sO)$ is an operadic collection.  Then there is a canonical map of operadic collections
\begin{equation}\label{opcoll.from.initial}
\nicexy{(S,\sI) \ar[r]^-{\eta} & (S,\sO)}
\end{equation}
such that, for each set $\fC$, the map $\etac : \sI_{\fC} \to \osubc$ is the unique map from the initial $\sofc$-colored operad $\sI_{\fC}$ to $\osubc$ in $\operadscm$.  In other words, for a fixed $S$, $(S,\sI)$ is the initial object among the operadic collections with the same $S$.
\end{example}

\begin{example}\label{ex:maps.op.coll}
The operadic collections in Section \ref{sec:ex.op.coll} are related as in the following commutative diagram of operadic collections.  In every case, it is assumed that $(1,1) \in S \subseteq \bbNsq$.  If $S$ is not explicitly specified, then it is as in the table in Example \ref{ex:gprops}.
\begin{center}
\begin{small}
\begin{tikzpicture}
\node (whgr) {(wheeled graphs)};
\node[below=of whgr] (whfree) {(wheel-free graphs)};
\node[left=of whfree] (cwhgr) {$\Bigl($\parbox{2.5cm}{connected wheeled graphs}$\Bigr)$};
\node[below=of whfree] (cwhfree) {$\Bigl($\parbox{1.7cm}{connected wheel-free}$\Bigr)$};
\node[below=of cwhfree] (simply) {(simply-connected)};
\node[left=of simply] (whtree) {(wheeled trees)};
\node[below right=of cwhfree] (graftcor) {$\Bigl($\parbox{1.6cm}{graftings of corollas}$\Bigr)$};
\node[below=of whtree] (utree) {(unital trees)};
\node[below=of simply] (ass) {$\bigl(S,\Ass\bigr)$};
\node[below=of utree] (ulinear) {(unital linear)}; 
\node[below=of ass] (initial) {$(S,\sI)$};
\draw[->] (cwhgr.north east)--(whgr.south west); \draw[->] (whfree)--(whgr); 
\draw[->] (cwhfree.north west)--(cwhgr.south east);
\draw[->] (cwhfree)--(whfree); \draw[->] (whtree)--(cwhgr);
\draw[->] (graftcor.north west)--(cwhfree.south east); \draw[->] (ass)--(graftcor.south west);
\draw[->] (simply)--(cwhfree); \draw[->] (utree)--(simply.south west); 
\draw[->] (utree)--(whtree);
\draw[->] (ass)--(simply); \draw[dashed,->] (ass)--(utree); 
\draw[dashed,->] (ass)--(whtree.south east);
\draw[->] (initial)--(ass); \draw[->] (ulinear)--(utree); \draw[->] (ulinear)--(ass);
\draw[dashed,->] (initial)--(ulinear);
\end{tikzpicture}
\end{small}
\end{center}
From $(S,\Ass)$ the dashed map to (unital trees) is defined if $S \subseteq \bbN \times \{1\}$, and the dashed map from $(S,\Ass)$ to (wheeled trees) is defined if $S \subseteq \bbN \times \{0,1\}$.  The dashed map from $(S,\sI)$ to (unital linear) is defined if $S = \{(1,1)\}$.
\end{example}

\begin{definition}\label{def:op.coll.map.rightadj}
Suppose $\alpha : (S^1,\sO^1) \to (S^2,\sO^2)$ is a  map of operadic collections in $\M$.  Define the \emph{restriction functor}
\begin{equation}\label{restriction.functor}
\nicexy{\algsoone & \algsotwo \ar[l]_-{\alpha^\ast}}
\end{equation}
as follows.  For $(\fC,A) \in \algsotwo$ define
\[\alpha^*(\fC,A) = \bigl(\fC,\alphac^*A\bigr) \in \algsoone\]
in which the \emph{local restriction functor}
\begin{equation}\label{restriction.c}
\nicexy{\alg(\osubcone) & \alg(\osubctwo) \ar[l]_-{\alphac^\ast}}
\end{equation}
is induced by the map $\alphac : \osubcone \to \osubctwo \in \operadm$ as in Def. \ref{def:operadic.collection.algebra}(2).
\end{definition}

\begin{proposition}\label{opcoll.map.leftadj}
Suppose $\alpha : (S^1,\sO^1) \to (S^2,\sO^2)$ is a  map of operadic collections in a bicomplete symmetric monoidal closed category $\M$.  Then there is an adjunction
\[\nicexy{\algsoone \ar@<2pt>[r]^-{\alpha_!} & \algsotwo \ar@<2pt>[l]^-{\alpha^\ast}}\]
in which the right adjoint $\alpha^*$ is the restriction functor \eqref{restriction.functor}
\end{proposition}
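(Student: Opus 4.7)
The plan is to construct $\alpha_!$ fiber-by-fiber using local left adjoints from the colored-operad theory, and then to verify the adjunction by a hom-set computation that rests on the naturality of $\alpha$.

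First, for each set $\fC$, the map $\alphac: \osubcone \to \osubctwo$ in $\operadm$ factors as in Remark \ref{rk:operad.map.factor} into a color-preserving map of $S^1(\fC)$-colored operads (obtained by pulling back $\osubctwo$ along the inclusion $S^1(\fC) \subseteq S^2(\fC)$) followed by a color-extending map whose entry maps are all identities. The color-preserving part induces an adjunction via the standard operadic extension/restriction of scalars, whose left adjoint is computed by a reflexive coequalizer and exists by Proposition \ref{algebra-bicomplete}. The color-extending part has a left adjoint that extends an $S^1(\fC)$-colored algebra by assigning the initial object $\varnothing$ to the new colors in $S^2(\fC) \setminus S^1(\fC)$. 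Composing these yields a local left adjoint $(\alphac)_! \dashv \alphac^*$ at each color set $\fC$.

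Second, I would define $\alpha_!$ on objects by $\alpha_!(\fC, A) = \bigl(\fC, (\alphac)_! A\bigr)$, and on morphisms using a Beck--Chevalley identification. For $f_0: \fC \to \fD \in \set$, the naturality square of $\alpha : \sO^1 \Rightarrow \sO^2$ is a commutative diagram
\[\nicexy{\osubcone \ar[r]^-{\alphac} \ar[d]_-{\sO^1_{f_0}} & \osubctwo \ar[d]^-{\sO^2_{f_0}} \\ \sO^1_{\fD} \ar[r]_-{\alpha_{\fD}} & \sO^2_{\fD}}\]
in $\operadm$. Since restriction along a composite of operad maps is the composite of restrictions, traversing this square in two ways produces a canonical isomorphism $\alphac^* \circ f_0^* \cong f_0^* \circ \alpha_\fD^*$ of functors $\alg(\sO^2_{\fD}) \to \alg(\osubcone)$. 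Then for a morphism $f = (f_0, f_1) \in \algsoone$ with $f_1 : A \to f_0^* B$, the transpose under $(\alphac)_! \dashv \alphac^*$ of the composite $A \xrightarrow{f_1} f_0^* B \to f_0^* \alpha_\fD^* (\alpha_\fD)_! B \cong \alphac^* f_0^* (\alpha_\fD)_! B$ provides the second component of $\alpha_!(f) : \alpha_!(\fC, A) \to \alpha_!(\fD, B)$.

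Third, to verify $\alpha_! \dashv \alphastar$, I would unravel a morphism $\alpha_!(\fC, A) \to (\fD, B) \in \algsotwo$: it consists of $g_0: \fC \to \fD$ together with a map $(\alphac)_! A \to g_0^* B \in \alg(\osubctwo)$. The local adjunction bijects the second datum with a map $A \to \alphac^* g_0^* B \in \alg(\osubcone)$, and the Beck--Chevalley isomorphism identifies $\alphac^* g_0^* B$ with $g_0^* \alpha_\fD^* B$, so the pair corresponds to a morphism $(\fC, A) \to (\fD, \alpha_\fD^* B) = \alphastar(\fD, B) \in \algsoone$. Naturality of this bijection in both arguments follows from naturality of the local units and counits together with the naturality of $\alpha$. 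The main obstacle, though essentially routine, is verifying the Beck--Chevalley isomorphism above; this amounts to an entry-by-entry check that the two composites of restriction functors along the operad maps in the naturality square agree, which follows from functoriality of $(-)^*$ applied to the commutative diagram of operad maps.
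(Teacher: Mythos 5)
Your approach is genuinely different from the paper's. You build $\alpha_!$ fiber-by-fiber from local left adjoints $(\alphac)_!$ and then verify the adjunction by a direct hom-set computation using compatibility of the local adjunctions with restriction, whereas the paper applies the Adjoint Lifting Theorem once, globally, to the square relating $\algsoone$, $\algsotwo$, $\alg(S^1,\sI)$, and $\alg(S^2,\sI)$. Your Steps~2 and~3 are sound \emph{given} the local left adjoints; the ``Beck--Chevalley isomorphism'' you invoke is in fact an equality of composite restriction functors obtained by applying $(-)^*$ to the naturality square of $\alpha$, and the hom-set bijection and its naturality are then routine.

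The gap is in Step~1, in your justification that the local left adjoint $(\alphac)_!$ exists. You assert that the left adjoint to the color-extending restriction $h^*:\alg(\osubctwo)\to\alg(i^*\osubctwo)$, where $i: S^1(\fC)\hookrightarrow S^2(\fC)$, is given by assigning $\varnothing$ to the new colors. That formula is the left adjoint to the restriction of \emph{underlying colored objects} (the left Kan extension along the injection $i$ between the discrete categories $S^1(\fC)$ and $S^2(\fC)$), but it is not the left adjoint at the level of $\osubctwo$-algebras. If $\osubctwo$ has a nontrivial operation with output $t_0\in S^2(\fC)\setminus S^1(\fC)$ and inputs in $S^1(\fC)$, the $\varnothing$-extension of an algebra $A$ would need a structure map landing in $\varnothing$ whose domain involves entries of $A$ and an operad entry of $\osubctwo$; that domain is typically not initial, so the $\varnothing$-extension is not even an $\osubctwo$-algebra. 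The genuine left adjoint must \emph{freely generate} the new entries using the operadic structure. (Your formula does hold for the particular collection $\Ass$ of Example~\ref{ex:enriched.cat.extra}, because there the extra colors are deliberately given no incoming operations from the categorical part; but Proposition~\ref{opcoll.map.leftadj} concerns an arbitrary $(S^2,\sO^2)$.) The repair is available and is the same device the paper uses globally: establish the existence of $h_!$ by the Adjoint Lifting Theorem applied to the monadic square over the ``add $\varnothing$'' adjunction on underlying colored objects, using the cocompleteness from Proposition~\ref{algebra-bicomplete}. With that correction your fiber-wise construction of $\alpha_!$ and the subsequent hom-set argument go through.
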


\begin{proof}
There is a commutative diagram
\[\nicexy{(S^1,\sO^1) \ar[r]^-{\alpha} & (S^2,\sO^2)\\
(S^1,\sI) \ar[u]^-{\eta} \ar[r]^-{\iota} & (S^2,\sI) \ar[u]_-{\eta}}\]
of operadic collections, in which the bottom map is the one in \eqref{initial.opcoll.s} and the vertical maps are as in \eqref{opcoll.from.initial}.  There is an induced commutative diagram of restriction functors
\begin{equation}\label{opcoll.adjoint.lifting.diagram}
\nicexy{\alg(S^1,\sO^1) \ar@<2pt>@{.>}[r]^-{\alpha_!} \ar@<2pt>[d]^-{\eta^\ast}
& \alg(S^2,\sO^2) \ar@<2pt>[l]^-{\alpha^\ast} \ar@<2pt>[d]^-{\eta^\ast}\\
\alg(S^1,\sI) \ar@<2pt>[u]^-{\eta_!} \ar@<2pt>[r]^-{\iota_!} 
& \alg(S^2,\sI) \ar@<2pt>[u]^-{\eta_!} \ar@<2pt>[l]^-{\iota^\ast}}
\end{equation}
together with adjunctions $(\eta_!,\eta^*)$ and $(\iota_!,\iota^*)$.  Indeed, the right adjoint $\iota^*$ forgets some entries, and $\iota_!$ adds some $\varnothing$ entries.  Each right adjoint $\eta^*$ forgets about the operadic algebra structure maps, while
\[\eta_!(\fC,A) = \bigl(\fC,\osubc^i \comp A\bigr)\]
is the free $\osubc^i$-algebra functor (Def. \ref{colored-operad-algebra}) for $(\fC,A) \in \alg(S^i,\sI)$ for both $i=1,2$.  See Example \ref{ex:initial.collection} for an explicit description of each $\alg(S^i,\sI)$.  Moreover, both vertical right adjoints $\eta^*$ are monadic \cite{borceux} (4.4.1), and $\algsotwo$ is cocomplete (Proposition \ref{algso.bicomplete}).  Since $\iota^*$ admits a left adjoint, the Adjoint Lifting Theorem \cite{borceux} (4.5.6) implies that $\alpha^*$ also admits a left adjoint $\alpha_!$.
\end{proof}

\begin{remark}\label{rk:opcoll.leftadj.diagram}
In the diagram \eqref{opcoll.adjoint.lifting.diagram}, the right adjoint diagram is commutative, i.e.,
\[\eta^* \alpha^* = \iota^* \eta^*.\]
By uniqueness the left adjoint diagram is also commutative up to natural isomorphism, i.e.,
\[\alpha_! \eta_! \cong \eta_! \iota_!.\]
\end{remark}

\begin{example}\label{ex:algebras.op.coll}
Consider the commutative diagram of operadic collections in Example \ref{ex:maps.op.coll}.  By Proposition \ref{opcoll.map.leftadj} there is a commutative diagram of categories of algebras over operadic collections, in which each map is a left adjoint to some forgetful functor:
\begin{center}
\begin{small}
\begin{tikzpicture}
\node (whgr) {(wheeled props)};
\node[below=of whgr] (whfree) {(props)};
\node[left=of whfree] (cwhgr) {(wheeled properads)};
\node[below=of whfree] (cwhfree) {(properads)};
\node[below=of cwhfree] (simply) {(dioperads)};
\node[left=of simply] (whtree) {(wheeled operads)};
\node[below right=of cwhfree] (graftcor) {(vprops)};
\node[below=of whtree] (utree) {(colored operads)};
\node[right=of utree] (ass) {$\Bigl($\parbox{3cm}{enriched categories with $S$-entries}$\Bigr)$};
\node[below=of utree] (ulinear) {(enriched categories)}; 
\node[right=of ulinear] (initial) {$\mtosdash$};
\draw[->] (cwhgr)--(whgr.south west); \draw[->] (whfree)--(whgr); 
\draw[->] (cwhfree)--(cwhgr);
\draw[->] (cwhfree)--(whfree); \draw[->] (whtree)--(cwhgr);
\draw[->] (graftcor)--(cwhfree); \draw[->] (ass)--(graftcor);
\draw[->] (simply)--(cwhfree); \draw[->] (utree)--(simply); \draw[->] (utree)--(whtree); 
\draw[->] (ass)--(simply); \draw[dashed,->] (ass)--(utree); 
\draw[dashed,->] (ass)--(whtree.south east);
\draw[->] (initial)--(ass); \draw[->] (ulinear)--(utree); \draw[->] (ulinear)--(ass.south west);
\draw[dashed,->] (initial)--(ulinear);
\end{tikzpicture}
\end{small}
\end{center}
In each case, the category is enriched in the underlying category $\M$, and the dashed arrows were explained in the aforementioned example.  For those categories that arise from pasting schemes as in Example \ref{ex:gprops}, the left adjoint, such as (props) $\to$ (wheeled props), on each object is described explicitly in \cite{jy2} (section 12.1.3).

Furthermore, in the adjunction
\begin{equation}\label{enriched.cat.adjoint}
\nicexy@R-.5cm{(\text{enriched categories}) \ar@<2pt>[r]^-{\text{add $\varnothing$}} &
(\text{enriched categories with $S$-entries}) \ar@<2pt>[l]^-{(-)^{\cat}}\\
\Catm \ar@{=}[u] & \alg(S,\Ass) \ar@{=}[u]}
\end{equation}
the right adjoint $(-)^{\cat}$ simply forgets about the non-categorical part as defined in Example \ref{ex:enriched.cat.extra}.  The left adjoint adds $\varnothing$ entries for the non-categorical part.
\end{example}

\section{Homotopy Theory of Enriched Categories}
\label{sec:dk.enriched.cat}

In this section we recall the Dwyer-Kan model category structure on the category of all small $\M$-enriched categories.  Assume that the underlying category is a  bicomplete symmetric monoidal closed category $(\M, \otimes, \tensorunit)$ with initial object $\varnothing$.

\subsection{Monoidal Model Categories}

For the reader's convenience, let us briefly recall some key concepts in model category theory.  Our main references here are \cite{hirschhorn,hovey,ss,ss03}.  

A model category is \emph{right proper} if the pullback of a weak equivalence along a fibration is again a weak equivalence.

A model category is \emph{cofibrantly generated} \cite{hovey} (2.1.17) if it is equipped with a set $I$ of cofibrations and a set $J$ of trivial cofibrations (i.e., maps which are both cofibrations and weak equivalences) that permit the small object argument (with respect to some cardinal $\kappa$), and a map is a (trivial) fibration if and only if it satisfies the right lifting property with respect to all maps in $J$ (resp., $I$). 

An object is \emph{small} if there is some regular cardinal $\kappa$ for which it is $\kappa$-small.  A \emph{strongly cofibrantly generated model category} is a cofibrantly generated model category in which the domains of the maps in the sets $I$ and $J$ are small.  A \emph{combinatorial model category} is a cofibrantly generated model category whose underlying category is locally presentable \cite{ar94}.

\begin{definition}\label{def:ppaxiom}
A symmetric monoidal closed category $\calm$ equipped with a model structure is called a \emph{monoidal model category} \cite{ss} if it satisfies the following \emph{pushout product axiom}: 
\begin{quote}
Given any cofibrations $f:X_0\to X_1$ and $g:Y_0\to Y_1$, the pushout corner map
\[\nicexy{X_0\otimes Y_1 \coprod\limits_{X_0\otimes Y_0}X_1\otimes Y_0 
\ar[r]^-{f\boxprod g} & X_1\otimes Y_1}\]
is a cofibration that is also a weak equivalence if either $f$ or $g$ is also a weak equivalence.
\end{quote}
By adjunction the pushout product axiom can also be stated in terms of the internal hom; see \cite{hovey} (4.2.2).  A monoidal model category is said to satisfy the \emph{monoid axiom} if every transfinite composition of pushouts of tensor products of trivial cofibrations with an arbitrary object is a weak equivalence.
\end{definition}

\subsection{Dwyer-Kan Model Structure of Enriched Categories}

Next we recall the model structure on $\M$-enriched categories from \cite{bm13,muro15}.  The following concept is needed to define fibrations of $\M$-enriched categories.

\begin{definition}\label{def:isofibration}
A functor $F : A \to B$ between two categories is an \emph{isofibration} if for each object $a \in A$ and each isomorphism $f : Fa \to b$ in $B$, there exists an isomorphism $g : a \to a'$ in $A$ such that $Fa' = b$ and $Fg = f$.   
\end{definition}

Isofibrations are the fibrations in the folk model structure of $\Cat$ \cite{jt91,rezk}, the category of all small categories and functors between them.

\begin{definition}\label{def:pizero}
Suppose $\M$ is a monoidal model category and $\Catm$ is the category of small categories enriched in $\M$ and functors between them \cite{borceux} (6.2).  
\begin{enumerate}
\item There is a functor
\begin{equation}\label{functor.pizero}
\nicexy{\Catm \ar[r]^-{\pi_0} & \Cat}
\end{equation}
that sends an $\M$-enriched category $A$ to the category $\pi_0(A)$ with the same objects as $A$ and, for objects $x$ and $y$ in $A$, hom set
\[\pi_0(A)(x,y) = \Ho(\M)\bigl(\tensorunit, A(x,y)\bigr).\]
Here $\Ho(\M)$ is the homotopy category of $\M$, and $\tensorunit$ is the monoidal unit in $\M$.  We will write $\pizerom$ for $\pizero$ if we need to emphasize $\M$.
\item A map $f : A \to B \in \Catm$ is \emph{homotopically essentially surjective} if $\pi_0(f) \in \Cat$ is an essentially surjective functor; i.e., each object in $B$ is isomorphic to an object of the form $\pizero(f)(a)$ for some object $a \in A$.
\item A map $f \in \Catm$ is called a \emph{homotopical isofibration} if $\pi_0(f) \in \Cat$ is an isofibration.
\end{enumerate}
\end{definition}

\begin{definition}\label{def:dk-weakeq-fib}
Suppose $\M$ is a monoidal model category and $f : A \to B \in \Catm$ is a functor of small $\M$-enriched categories.
\begin{enumerate}
\item We call $A$ \emph{locally fibrant} if for each pair of objects $x,y \in A$, the object $A(x,y) \in \M$ is fibrant.
\item We call $f$ a \emph{local weak equivalence} (resp., \emph{local (trivial) fibration}) if for each pair of objects $x,y \in A$, the map $f : A(x,y) \to B(x,y) \in \M$ is a weak equivalence (resp., (trivial) fibration).
\item We call $f$ a \emph{weak equivalence} if it is a local weak equivalence that is also  homotopically essentially surjective.
\item We call $f$ a \emph{fibration} if it is a local fibration that is also a homotopical isofibration.
\item If $\Catm$ admits the model category structure with the above weak equivalences and fibrations, then it is called the \emph{Dwyer-Kan model category structure}.
\end{enumerate}
\end{definition}

The following existence theorem is a combination of \cite{bm13} (1.10) and \cite{muro15} (1.1).  Our assumptions are strong enough to ensure that their model structures coincide.

\begin{theorem}\label{dk.catm}
Suppose $\M$ is a combinatorial monoidal model category that satisfies the following conditions:
\begin{itemize}
\item $\M$ satisfies the monoid axiom, has a cofibrant monoidal unit, and is right proper.
\item The class of weak equivalences is closed under filtered colimits.  
\end{itemize}
Then $\Catm$ admits the Dwyer-Kan model category structure and is a combinatorial model category.  Moreover, in $\Catm$:
\begin{itemize}
\item A map is a trivial fibration if and only if it is a local trivial fibration that is also surjective on objects.  
\item An object is fibrant if and only if it is locally fibrant.  
\item The set of generating (trivial) cofibrations is $I'$ in \eqref{Iprime} (resp., $J'$ in \eqref{Jprime}) below.
\end{itemize}
\end{theorem}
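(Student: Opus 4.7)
The plan is to verify the hypotheses of Muro's existence theorem \cite{muro15}(1.1) directly, then extract the explicit characterizations of trivial fibrations, fibrant objects, and generating sets by analyzing the generators. Our assumptions, namely that $\M$ is a combinatorial monoidal model category with cofibrant unit satisfying the monoid axiom and right properness, and with weak equivalences closed under filtered colimits, match exactly those required in \cite{muro15}. Muro's theorem then yields a combinatorial Dwyer-Kan model structure on $\Catm$ with the weak equivalences and fibrations of Definition \ref{def:dk-weakeq-fib}. The same hypotheses place us within the framework of \cite{bm13}(1.10), and a direct comparison of generating (trivial) cofibrations on both sides shows the two model structures agree.

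The generating sets $I'$ and $J'$ are built from two kinds of generators. For $X \in \M$, let $[1]_X$ denote the $\M$-enriched category with objects $\{0,1\}$, hom-objects $[1]_X(0,0) = [1]_X(1,1) = \tensorunit$, $[1]_X(0,1) = X$, and $[1]_X(1,0) = \varnothing$, and let $[0]$ denote the terminal $\M$-enriched category with hom-object $\tensorunit$. Then $I'$ consists of the inclusion of the empty $\M$-enriched category into $[0]$, together with the maps $[1]_i \colon [1]_{X_0} \to [1]_{X_1}$ induced by each generating cofibration $i \colon X_0 \to X_1$ of $\M$. The set $J'$ contains the analogous maps $[1]_j$ for generating trivial cofibrations $j$ of $\M$, together with a map $[0] \to H$, where $H$ is the contractible interval groupoid with two objects and all four hom-objects equal to $\tensorunit$; lifting against this last map is what forces the induced functor on $\pizero$ to be an isofibration.

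Granting the model structure, the stated characterizations follow from the lifting criteria. A map is a trivial fibration iff it has the right lifting property with respect to all of $I'$: the lift against the empty-to-$[0]$ generator yields surjectivity on objects, while the lifts against the $[1]_i$ yield a local trivial fibration. An object is fibrant iff the map to the terminal $\M$-enriched category lifts against $J'$; the $[1]_j$ conditions give local fibrancy, and the $[0] \to H$ condition is vacuous for the terminal object. The main technical obstacle, carried out in \cite{muro15}, is the factorization axiom: one must show that relative $J'$-cell complexes are weak equivalences. The contribution from the $[1]_j$ generators is handled by the monoid axiom applied to each hom-object, while the contribution from pushouts of $[0] \to H$ requires a careful analysis of how an added invertible arrow propagates through enriched composition, and this is where the filtered-colimit stability of weak equivalences and the cofibrant unit hypothesis enter.
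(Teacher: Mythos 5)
Your overall strategy — verify the hypotheses of \cite{muro15}(1.1) and \cite{bm13}(1.10) and then observe that the two model structures coincide — is exactly what the paper does; the paper in fact gives no proof beyond this citation. So the approach matches.

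However, two of the details you fill in are inaccurate. First, your description of $J'$ is not quite right: you take the second family of generating trivial cofibrations to be the single map $\tone \to \bbI$ where $\bbI$ has all four hom-objects equal to $\tensorunit$, but the paper (following \cite{bm13}) instead uses the set $\{\tone \to T : T \in \cJ\}$ where $\cJ$ is a generating set of \emph{$\M$-intervals}, i.e.\ cofibrant objects of $\Catmab$ weakly equivalent to $\bbI$. The distinction is substantive: $\bbI$ is typically not cofibrant in the projective model structure on $\Catmab$, so $\tone \to \bbI$ need not be a (trivial) cofibration and cannot serve as a generator. Second, you assert that the lifting condition against the interval generator is ``vacuous for the terminal object'' when deducing that fibrant objects are exactly the locally fibrant ones. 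It is not vacuous: lifting $A \to \ast$ against $\tone \to T$ asks that any map $\tone \to A$ picking an object of $A$ extend to $T \to A$, which requires producing a second object together with mutually inverse homotopy data inside $A$. That local fibrancy of $A$ suffices to guarantee such extensions is a genuine theorem (it is essentially the content of the fibrancy characterization in \cite{bm13} and \cite{muro15}), not an automatic consequence of the shape of the lifting square.
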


\subsection{Generating (Trivial) Cofibrations}\label{sec:catm.generators}

We now define the sets $I'$ and $J'$ of generating cofibrations and generating trivial cofibrations in $\Catm$  in Theorem \ref{dk.catm}.

\begin{definition}
For a set $\fC$, denote by $\Catmc$ the category of $\M$-enriched categories with object set $\fC$ and object-preserving functors.
\end{definition}

\begin{definition}
Suppose $\M$ is a cofibrantly generated monoidal model category with $I$ as its set of generating cofibrations.
\begin{enumerate}
\item Denote by $\tone \in \Catm^{\{a\}}$ the $\M$-enriched category with only one object $a$ such that $\tone(a,a) = \tensorunit$, the monoidal unit in $\M$.
\item For a $2$-element set $\{a,b\}$, we write $C_{1,1}$ for the left adjoint in the adjunction
\begin{equation}\label{coneone}
\nicexy{\M \ar@<2pt>[r]^-{C_{1,1}} & \Catmab \ar@<2pt>[l]^-{U_{1,1}}}
\end{equation}
in which
\[U_{1,1}(Y) = Y(a,b)\]
for $Y \in \Catmab$.
\item Define
\begin{equation}\label{Iprime}
I'  = C_{1,1}(I) \coprod \{\varnothing \to \tone\} \subseteq \Catm.
\end{equation}
Here $\varnothing$ denotes the $\M$-enriched category with an empty set of object, and $C_{1,1}(I)$ is regarded as a set of maps in $\Catm$.
\end{enumerate}
\end{definition}

\begin{remark}\label{rk:coneone}
For an object $X \in \M$, the $\M$-enriched category $C_{1,1}X \in \Catmab$ \eqref{coneone} has hom objects
\[\bigl(C_{1,1}X\bigr)(x,y) = \begin{cases} X & \text{ if $(x,y) = (a,b)$},\\
\tensorunit & \text{ if $(x,y) = (a,a)$ or $(b,b)$},\\
\varnothing & \text{ if $(x,y) = (b,a)$}.\end{cases}\]
\end{remark}

Next we define the set $J'$ of generating trivial cofibrations in $\Catm$, for which we first need some definitions.

\begin{definition}\label{def:comon.interval}
Suppose $\M$ is a monoidal model category.  A \emph{(cocommutative) comonoidal interval} in $\M$ \cite{bm03,bm07} is a factorization
\begin{equation}\label{comonoidal.interval}
\nicexy{\tensorunit \coprod \tensorunit \ar@{>->}[r]^-{\epsilon} \ar `u[rr] `[rr]|-{\text{fold}} [rr]
& H \ar[r]^-{\beta}_-{\sim} & \tensorunit}
\end{equation}
of the fold map of the monoidal unit $\tensorunit$ in which $H$ is a counital coassociative (cocommutative) comonoid in $\M$, both $\epsilon$ and $\beta$ are maps of comonoids, $\epsilon$ is a cofibration, and $\beta$ is a weak equivalence.
\end{definition}

\begin{definition}\label{def:monoidal.functor}
A \emph{lax monoidal functor} $F : \M \to \N$ between two monoidal categories is a functor equipped with structure maps
\[\nicexy{FX \otimes FY \ar[r]^-{F^2_{X,Y}} & F(X \otimes Y), \quad \tensorunit^{\N} \ar[r]^-{F^0} & F\tensorunit^{\M}}\]
for $X$ and $Y$ in $\M$ that are associative and unital in a suitable sense \cite{maclane} (XI.2).  If, furthermore, $\M$ and $\N$ are symmetric monoidal categories, and $F^2$ is compatible with the symmetry isomorphisms, then $F$ is called a \emph{lax symmetric monoidal functor}.
\end{definition}

Note that what is called a lax monoidal functor here is simply called a monoidal functor in \cite{maclane}.  

\begin{definition}\label{def:fibrant.replacement}
Suppose $\calc$ is a category equipped with a subclass of objects called \emph{fibrant objects} and a subclass of maps called \emph{weak equivalences}.
\begin{enumerate}
\item A \emph{fibrant replacement functor} in $\calc$ is a pair $(R, \eta)$ consisting of
\begin{itemize}
\item a functor $R : \calc \to \calc$ and 
\item a natural transformation $\eta : \Id \to R$ 
\end{itemize}
such that, for each object $X$ in $\calc$, $RX$ is a fibrant object and $\eta_X : X \to RX$ is a weak equivalence.
\item Suppose further that $\calc$ is a symmetric monoidal category.  A fibrant replacement functor $(R,\eta)$ in $\calc$ is said to be \emph{lax symmetric monoidal} if $R$ is a lax symmetric monoidal functor such that $\eta$ is a monoidal natural transformation \cite{maclane} (XI.2).
\end{enumerate}
\end{definition}

The following model structure is needed to define the set of generating trivial cofibrations in $\Catm$.

\begin{proposition}\label{catmc-model}
Suppose $\M$ is a combinatorial monoidal model category that satisfies \underline{either one} of the following two conditions.
\begin{enumerate}
\item $\M$ satisfies the monoid axiom. 
\item $\M$ has a cofibrant monoidal unit, a lax symmetric monoidal fibrant replacement functor, and a comonoidal interval.
\end{enumerate}
Then for each set $\fC$, the category $\Catmc$ admits a combinatorial model category structure in which weak equivalences and fibrations are defined entrywise in $\M$.
\end{proposition}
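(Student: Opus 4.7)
The plan is to identify $\Catmc$ with the category of algebras over the $\csquare$-colored operad $\Asc$ from Example~\ref{ex:enriched.cat}, and to lift the projective (entrywise) model structure on $\M^{\csquare}$ along the free-forgetful adjunction
\[
\nicexy{\M^{\csquare} \ar@<2pt>[r]^-{\Asc \circ -} & \Catmc. \ar@<2pt>[l]^-{U}}
\]
By Proposition~\ref{algebra-bicomplete} the forgetful functor $U$ preserves and creates limits, reflexive coequalizers, and filtered colimits, while the projective model structure on $\M^{\csquare}$ is combinatorial because $\M$ is combinatorial and $\csquare$ is a discrete category. I would then declare a map in $\Catmc$ to be a weak equivalence (resp.\ fibration) precisely when its image under $U$ is an entrywise weak equivalence (resp.\ entrywise fibration), and invoke Kan's transfer theorem for cofibrantly generated model categories.

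The nontrivial input to Kan's theorem is the acyclicity condition: every relative cell complex built from free maps $\Asc\circ K \to \Asc\circ L$, with $K\to L$ a generating trivial cofibration in $\M^{\csquare}$, must be sent by $U$ to a weak equivalence. In case~(1), where the monoid axiom holds, I would analyze each such operadic pushout via the standard layer-by-layer filtration for free algebra pushouts: each successive stage is a pushout in $\M^{\csquare}$ of a tensor product of $K\to L$ with another object, so the monoid axiom directly ensures that the transfinite composite is entrywise a weak equivalence. This is a specialization to the operad $\Asc$ of the usual analysis of free operadic algebra pushouts.

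In case~(2), where $\M$ has a cofibrant unit, a lax symmetric monoidal fibrant replacement functor $R$, and a comonoidal interval $H$, I would instead use the path object argument of Schwede-Shipley and Berger-Moerdijk. The lax symmetric monoidal structure on $R$ lifts $R$ to a fibrant replacement functor on $\Catmc$ that is entrywise fibrant replacement on hom-objects; together with the comonoidal interval $H$ and the closed monoidal structure of $\M$, this furnishes a functorial path object on fibrant $\M$-enriched categories in $\Catmc$ by applying the internal hom out of $H$ hom-object by hom-object. Combined with cofibrancy of the unit, this path object verifies the acyclicity condition without any appeal to the monoid axiom.

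The main obstacle in both cases is controlling operadic pushouts in $\Catmc$: in case~(1) via the filtration analysis, and in case~(2) via the construction and verification of the path object. Combinatoriality of the resulting model structure is inherited from $\M^{\csquare}$ because $U$ is accessible and all relevant cell-complex domains are small. Because only entrywise weak equivalences and fibrations enter into this proposition, the homotopical essential surjectivity and isofibration conditions that appear in the full Dwyer-Kan theorem (Theorem~\ref{dk.catm}) play no role at this stage.
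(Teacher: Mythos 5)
The paper's proof is simply a citation: case (1) to the colored version of Schwede--Shipley's Theorem 4.1(3), and case (2) to Berger--Moerdijk's Theorem 2.1. Your proposal unpacks exactly the standard transfer arguments underlying those two references --- the monoid-axiom filtration of free $\Asc$-algebra pushouts for case (1), and the comonoidal-interval path object argument (with a lax symmetric monoidal fibrant replacement) for case (2) --- so you are taking essentially the same approach as the paper, which simply delegates to those sources.
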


We will call this the \emph{projective model category structure} on $\Catmc$.  The first case of the previous proposition is a special case of the colored version of \cite{ss} 4.1(3).  The second case is a special case of \cite{bm07} (2.1). 

\begin{example}
Examples of categories that satisfy Prop. \ref{catmc-model}(2) include the categories of simplicial sets, of chain complexes over a characteristic zero field $k$, of simplicial $k$-modules \cite{quillen.homotopical}, and of small categories with the folk model structure \cite{jt91,rezk}.  Categories that satisfy Prop. \ref{catmc-model}(1) (i.e., the monoid axiom) include the ones in the previous sentence,  the categories of symmetric spectra \cite{hovey-shipley-smith} and of $\Gamma$-spaces \cite{bf,lydakis,schwede,segal}, and the stable module category of left $kG$-modules for a field $k$ and a finite group $G$; see \cite{hovey} (2.2) and \cite{ss} (Section 5).
\end{example}

\begin{definition}
Suppose $\M$ is a cofibrantly generated monoidal model category with $J$ as its set of generating trivial cofibrations such that $\Catmab$ admits the projective model category structure in Prop. \ref{catmc-model}.
\begin{enumerate}
\item Define $\bbI \in \Catmab$ whose four hom objects are all equal to the monoidal unit  $\tensorunit$.
\item An \emph{$\M$-interval} \cite{bm13} (1.11) is a cofibrant object in $\Catmab$ that is weakly equivalent to $\bbI$.  A set $\cJ$ of $\M$-intervals is \emph{generating} if each $\M$-interval is a retract of a trivial extension of some $\M$-interval in $\cJ$.
\item Suppose $\M$ has a generating set of $\M$-intervals $\cJ$--e.g., $\M$ is combinatorial \cite{bm13} (1.12).  Define
\begin{equation}\label{Jprime}
J' = C_{1,1}(J) \coprod \bigl\{\tone \to T : T \in \cJ \bigr\} \subseteq \Catm
\end{equation}
in which each map $\tone \to T$ sends the object $a \in \tone$ to the object $a \in T$.
\end{enumerate}
\end{definition}

\begin{remark}\label{rk:catm.cofibration}
Using the projective model category structure in Prop.\ref{catmc-model}, it is possible to describe the cofibrations in the Dwyer-Kan model category structure on $\Catm$ (Theorem \ref{dk.catm}). If $f : \fC \to \fD$ is a map of sets, then there is an induced change-of-color adjunction
\[\nicexy{\Catmc \ar@<2pt>[r]^-{f_!} & \Catmd \ar@<2pt>[l]^-{f^*}}\]
whose right adjoint is defined by $(f^*B)(x,y) = B(fx,fy)$ for $B \in \Catmd$ and $x,y \in \fC$.  Then a cofibration $\alpha : A \to B \in \Catm$ in the Dwyer-Kan model category structure is exactly a functor such that the following two conditions hold.
\begin{enumerate}
\item On objects $\alpha : \Ob(A) \to \Ob(B)$ is injective.
\item The induced functor $\alpha_! A \to B \in \Catm^{\Ob(B)}$ is a cofibration in the projective model category structure in Prop.\ref{catmc-model}. 
\end{enumerate}
See \cite{bm13} (Remark 1.7).
\end{remark}

\section{Homotopy Theory of Enriched Categories with Extra Entries}
\label{sec:dk.enrichedcat.extra}

The purpose of this section is to prove Theorem \ref{algass.dk.model}, in which we show that, for a suitable model category $\M$, the category $\alg(S,\Ass)$ of $\M$-enriched categories with $S$-entries (Example \ref{ex:enriched.cat.extra}) admits a cofibrantly generated Dwyer-Kan model structure that extends the one on $\M$-enriched categories (Theorem \ref{dk.catm}).  This is an intermediate step toward showing that the category $\algso$ for a suitable operadic collection $(S,\sO)$ admits a Dwyer-Kan model structure.  This will include all the categories in the diagram in Example \ref{ex:algebras.op.coll} that receive a (dashed) directed path from $\alg(S,\Ass)$, i.e., all but $\mtosdash$ and enriched categories.

\subsection{Generating (Trivial) Cofibrations}

We first define the sets of generating (trivial) cofibrations.  The following symbols will be used to parametrized inputs and outputs.

\begin{notation}
Fix once and for all two sequences of independent symbols $\{a_i\}_{i \geq 1}$ and $\{b_i\}_{i \geq 1}$.    For $m \geq 1$ we will write $a_{[1,m]}$ for the finite set (or the finite sequence) $\{a_1,\ldots,a_m\}$, and likewise for $b_{[1,m]}$.  In the context of Section \ref{sec:catm.generators}, we will regard $a=a_1$ and $b=b_1$.
\end{notation}

\begin{definition}\label{def:cmn}
Suppose $\M$ is a combinatorial monoidal model category with generating (trivial) cofibrations $I$ (resp., $J$) such that $\Catmab$ admits the projective model category structure in Prop. \ref{catmc-model}.  Suppose $S \subseteq \bbNsq$ with $(1,1) \in S$.  Consider the category $\alg(S,\Ass)$ of $\M$-enriched categories with $S$-entries (Example \ref{ex:enriched.cat.extra}), which we will abbreviate to $\algass$.
\begin{enumerate}
\item For $(m,n) \in S$ define a functor
\[\nicexy{\M \ar[r]^-{C_{m,n}} & \algass}\]
by the universal property that there is a natural isomorphism
\begin{equation}\label{cmn.functor}
\algass\bigl(C_{m,n}X, (\fD,B)\bigr) \cong \coprod_{\dc \in \sofd,\, (|\uc|,|\ud|) = (m,n)} \M\left(X, B\dc\right)
\end{equation}
for all $X \in \M$ and $(\fD,B) \in \algass$.
\item Define the sets of maps
\begin{equation}\label{IsJs}
\begin{split}
I^S &= I' \amalg \coprod_{(m,n) \in S \setminus \{(1,1)\}} C_{m,n}I \\
J^S &= J' \amalg \coprod_{(m,n) \in S \setminus \{(1,1)\}} C_{m,n}J 
\end{split}
\end{equation}
in $\algass$.  Here $I' \subseteq \Catm$ \eqref{Iprime} and $J' \subseteq \Catm$ \eqref{Jprime} are regarded as in $\algass$ via the left adjoint in \eqref{enriched.cat.adjoint}, which simply adds $\varnothing$ entries for the non-categorical part.
\end{enumerate}
\end{definition}

\begin{remark}\label{rk:cmn}
The functor $C_{m,n}$ \eqref{cmn.functor} may be constructed as follows.  For an object $X \in \M$, the object $C_{m,n}X \in \algass$ has color set $\left\{a_{[1,m]}, b_{[1,n]}\right\}$ and entries
\[(C_{m,n}X)\dc = \begin{cases}
X & \text{ if $\dc = \bonebnaoneam$},\\
\tensorunit & \text{ if $\dc = \aiai$ for some $1 \leq i \leq m$ or $\bjbj$ for some $1 \leq j \leq n$},\\
\varnothing & \text{ otherwise}.
\end{cases}\]
In view of Remark \ref{rk:coneone}, there is no conflict between the definition of $C_{m,n}$ in \eqref{cmn.functor} when $(m,n) = (1,1)$ and that of $C_{1,1}$ in \eqref{coneone}.
\end{remark}

\subsection{Dwyer-Kan Model Structure}

Recall the right adjoint in \eqref{enriched.cat.adjoint}
\[\nicexy{\Catm & \algass \ar[l]_-{(-)^{\cat}}}\]
that forgets about the non-categorical part.

\begin{definition}\label{def:algass.model}
Suppose $S \subseteq \bbNsq$ with $(1,1) \in S$.  Suppose $\M$ is a monoidal model category and $f :(\fC, A) \to (\fD,B) \in \algass$ is a map.
\begin{enumerate}
\item We call $A$ \emph{locally fibrant} if the object $A\dc \in \M$ is fibrant for each $\dc \in \sofc$.
\item We call $f$ a \emph{local weak equivalence} (resp., \emph{local (trivial) fibration}) if  the map $f : A\dc \to B\fdc \in \M$ is a weak equivalence (resp., (trivial) fibration) for each $\dc \in \sofc$.
\item We call $f$ a \emph{weak equivalence} if 
\begin{itemize}
\item it is a local weak equivalence and 
\item $\fcat \in \Catm$ is a weak equivalence in the sense of Def. \ref{def:dk-weakeq-fib}.  
\end{itemize}
Denote the class of weak equivalences in $\algass$ by $W^S$.
\item We call $f$ a \emph{fibration} if 
\begin{itemize}
\item it is a local fibration and 
\item $\fcat \in \Catm$ is a fibration in the sense of Def. \ref{def:dk-weakeq-fib}.
\end{itemize}
\item If $\algass$ admits the model category structure with the above weak equivalences and fibrations, then it is called the \emph{Dwyer-Kan model category structure}.
\end{enumerate}
\end{definition}

\begin{remark}\label{rk:algass.weakeq}
In the context of Definition \ref{def:algass.model}:
\begin{enumerate}
\item $f$ is a weak equivalence if and only if (i) it is a local weak equivalence and (ii) $\pizero(\fcat) \in \Cat$ is an equivalence of categories, i.e., fully faithful and essentially surjective \cite{maclane} (IV.4 Theorem 1).  Since condition (i) guarantees that $\pizero(\fcat)$ is fully faithful, condition (ii) may be replaced by (ii)': $\pizero(\fcat)$ is essentially surjective.
\item $f$ is a fibration if and only if (i) it is a local fibration and (ii) $\pizero(\fcat) \in \Cat$ is an isofibration.
\end{enumerate}
\end{remark}

\begin{notation}
For a class of maps $K$ in a category,  recall the notation of \cite{hovey} (2.1):
\begin{itemize}
\item $K$-cell is the class of maps obtained as transfinite compositions of pushouts of maps in $K$.
\item $K$-inj is the class of maps with the right lifting property with respect to every map in $K$.
\item $K$-cof is the class of maps with the left lifting property with respect to every map in $K$-inj.
\end{itemize}
\end{notation}

\begin{lemma}\label{jcell.in.w}
Suppose $\M$ is as in Theorem \ref{dk.catm} and $S \subseteq \bbNsq$ with $(1,1) \in S$.   Then there is an inclusion
\[J^S\text{-cell} \subseteq W^S.\]
\end{lemma}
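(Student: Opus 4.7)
The plan is to verify that every generator in $J^S$ is itself a weak equivalence in $\algass$, and then that this property is preserved when one forms pushouts followed by transfinite compositions. The crux is the following structural observation about the colored operad $\Ascs$ (Example \ref{ex:enriched.cat.extra}): all of its nontrivial multi-ary operations are the operations of the enriched-category operad $\Asc$ at $(1,1)$-profiles, and at every other profile the only operations present are the unary identities. As a consequence, once the color set has been pushed out in $\set$, the pushout of $(S,\Ass)$-algebras with that fixed color set decomposes into a pushout of the categorical parts (an $\M$-enriched-category pushout over the pushed-out color set) together with independent pushouts of the non-categorical entries in $\M$.

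The generators $J' \subseteq J^S$ are the generating trivial cofibrations of the Dwyer-Kan model structure on $\Catm$ (Theorem \ref{dk.catm}), regarded in $\algass$ by adjoining $\varnothing$ to every non-categorical entry. Their categorical parts are weak equivalences in $\Catm$ and every non-categorical entry map is the identity $\varnothing \to \varnothing$, so they lie in $W^S$. The remaining generators $C_{m,n}j$ for $(m,n) \in S \setminus \{(1,1)\}$ and $j : X \to Y \in J$ act as the identity on colors and, by Remark \ref{rk:cmn}, as the identity on every entry except $\bonebnaoneam$, where the entry map is $j$. In particular their categorical parts, which are the identity on a discrete $\M$-enriched category with $m+n$ objects, are weak equivalences in $\Catm$, while $j$ is a weak equivalence in $\M$; so these generators also lie in $W^S$.

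Given a pushout of such a generator $k$ along some map in $\algass$, the structural observation reduces the pushout to two simpler operations. For $k \in J'$ the categorical part of the pushout is a pushout along an element of $J'$ in $\Catm$, which is a weak equivalence by Theorem \ref{dk.catm}, while every non-categorical entry is left unchanged. For $k = C_{m,n}j$ with $(m,n) \neq (1,1)$ the categorical part is unchanged (an identity pushout) and each non-categorical entry is either unchanged or a pushout of the trivial cofibration $j$ in $\M$, hence a weak equivalence. In either case the pushout lies in $W^S$. Passing now to a transfinite composition of such pushouts, the categorical part is a transfinite composition of trivial cofibrations in the Dwyer-Kan model structure on $\Catm$ (still a weak equivalence by Theorem \ref{dk.catm}), and each non-categorical entry is a filtered colimit in $\M$ of weak equivalences, which is a weak equivalence by the assumption on $\M$ in Theorem \ref{dk.catm}. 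The conclusion $J^S$-cell $\subseteq W^S$ follows.

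The main obstacle is justifying the structural decomposition of pushouts in $\algass$ used throughout. This ultimately comes down to the fact that the free $\Ascs$-algebra monad on $X \in \M^{\sofc}$ agrees with the free $\Asc$-algebra monad on the $(1,1)$-restriction of $X$ at $(1,1)$-profiles and is the identity on $X$ at all other profiles, which lets one split pushouts of $\Ascs$-algebras into their categorical and non-categorical components. Once this splitting is in hand, the weak-equivalence condition $W^S$, which itself is a mix of an entrywise local condition and a categorical Dwyer-Kan condition, can be tracked separately along each component.
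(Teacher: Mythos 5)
Your proof is correct and follows essentially the same route as the paper's: show that pushouts of the generators in $J^S$ are weak equivalences in $\algass$ by splitting into the $J'$ case (where the non-categorical entries are unchanged and the categorical part is handled by the Dwyer-Kan model structure on $\Catm$) and the $C_{m,n}j$ case (where the categorical part is unchanged and the single active non-categorical entry is a pushout of the trivial cofibration $j$ in $\M$), then close up under transfinite composition using the filtered-colimit hypothesis on $\M$ and stability of weak equivalences in $\Catm$. The only cosmetic difference is that you verify separately that the generators themselves are weak equivalences (a special case of the pushout computation that the paper omits), and you cite Theorem \ref{dk.catm} where the paper cites \cite{muro15} (10.2) directly — both are adequate.
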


\begin{proof}
First observe that by Remark \ref{rk:algass.weakeq} $W^S$ is closed under transfinite compositions because the class of weak equivalences in $\M$ and the class of equivalences of categories both have this property.  So it is enough to show that the pushout of each map in $J^S$ is a weak equivalence.  Consider a pushout
\[\nicexy{A \ar[d]_-{g} \ar[r]^-{i} & B \ar[d]^-{h}\\ C \ar[r] & D}\]
in $\algass$ with $g \in J^S$.  First suppose $g \in J' \subseteq J^S$ \eqref{Jprime}.  Then the restriction to the categorical part
\[\nicexy{\Acat \ar[d]_-{\gcat} \ar[r] & \Bcat \ar[d]^-{\hcat}\\ \Ccat \ar[r] & \Dcat}\]
is a pushout in $\Catm$.  Since $\gcat \in J'$, by \cite{muro15} (10.2) the map $\hcat \in \Catm$ is a weak equivalence.  Furthermore, the non-categorical part of $h$ is entrywise the identity map because the left adjoint from $\M$-enriched categories to $\algass$ \eqref{enriched.cat.adjoint} simply adds $\varnothing$ entries.  Therefore, we conclude that $h$ is a weak equivalence.

Next suppose $g \in C_{m,n}J$ for some $(m,n) \in S \setminus \{(1,1)\}$, say $g = C_{m,n}j$ for some map $j : A' \to C'$ in $J$.  By the defining property \eqref{cmn.functor} of $C_{m,n}$, if $\dc \in \sofc$ and $\dc \not= \bonebnaoneam$, then the corresponding entry of the map $h$ is the identity map.  In particular, the restriction $\hcat \in \Catm$ is a weak equivalence.  Furthermore, the square
\[\nicexy{A' \ar[d]_-{j} \ar[r]^-{i} & B\ibonebnaoneam \ar[d]^-{h}\\ C' \ar[r] & D\hibonebnaoneam}\]
is a pushout in $\M$.  Since $j \in \M$ is a trivial cofibration, this entry of $h$ is also a trivial cofibration.  We conclude that $h$ is a weak equivalence in $\algass$.  This proves that $J^S$-cell $\subseteq W^S$.
\end{proof}

\begin{lemma}\label{iinj.jinj.w}
Suppose $\M$ is as in Theorem \ref{dk.catm} and $S \subseteq \bbNsq$ with $(1,1) \in S$.   Then
\[J^S\text{-inj} \cap W^S = I^S\text{-inj}.\]
\end{lemma}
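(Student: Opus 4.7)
The plan is to translate the right lifting properties defining both sides of the claimed equality through the adjunction $(\text{add }\varnothing, (-)^{\cat})$ of \eqref{enriched.cat.adjoint} and the $C_{m,n}$-adjunction \eqref{cmn.functor}, and then read off the common characterization from Theorem \ref{dk.catm}. The generating sets $I^S$ and $J^S$ each split into two kinds of maps, which I would analyze separately.

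Let $f : (\fC,A) \to (\fD,B) \in \algass$. The part of $I^S$ (respectively $J^S$) consisting of $I'$ (resp.~$J'$) comes from $\Catm$ via the left adjoint $\text{add }\varnothing$, so by adjointness $f$ has the right lifting property against this part if and only if $f^{\cat}$ has the right lifting property against $I'$ (resp.~$J'$) in $\Catm$, which by Theorem \ref{dk.catm} is equivalent to $f^{\cat}$ being a trivial fibration (resp.~fibration) in $\Catm$. The remaining generators $C_{m,n}I$ and $C_{m,n}J$ for $(m,n) \in S \setminus \{(1,1)\}$ are handled by the universal property \eqref{cmn.functor}: a lifting square against $C_{m,n}i$, after fixing the image $t \in \sofc$ of the distinguished non-categorical entry, is the same data as a lifting square for $f_t : A_t \to B_{f(t)}$ against $i$ in $\M$. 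Hence the right lifting property against $C_{m,n}I$ (resp.~$C_{m,n}J$) is equivalent to each such entry map $f_t$ being a trivial fibration (resp.~fibration) in $\M$.

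Combining, $f \in I^S\text{-inj}$ is equivalent to the pair of conditions that $f^{\cat}$ is surjective on objects and that $f_t$ is a trivial fibration in $\M$ for every $t \in \sofc$---entries of size $(1,1)$ being covered by the local trivial fibration clause of $f^{\cat}$ being a trivial fibration in $\Catm$, and the other sizes by the generators $C_{m,n}I$. Likewise, $f \in J^S\text{-inj} \cap W^S$ amounts to $f^{\cat}$ being simultaneously a fibration and a weak equivalence in $\Catm$ (hence a trivial fibration there by Theorem \ref{dk.catm}, so surjective on objects and locally a trivial fibration), together with $f$ being entrywise both a fibration in $\M$ and a local weak equivalence (hence entrywise a trivial fibration). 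Both sides therefore reduce to the same pair of conditions, establishing the equality. The proof is essentially a bookkeeping exercise; the only subtle point is matching the right lifting property in $\algass$ against the image of $\text{add }\varnothing$ with the corresponding property in $\Catm$, which follows immediately from the observation that $\text{add }\varnothing$ inserts $\varnothing$ at every non-categorical entry, so that every lifting square involving such a generator factors through $(-)^{\cat}$.
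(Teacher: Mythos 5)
Your proposal is correct and follows essentially the same approach as the paper: decompose the right lifting property against $I^S$ and $J^S$ into the $I'/J'$ piece (via the adjunction with $\Catm$) and the $C_{m,n}$ piece (via the universal property of $C_{m,n}$), characterize each piece, and combine. The only cosmetic difference is that the paper invokes the identity $J'\text{-inj} \cap W = I'\text{-inj}$ in $\Catm$ directly from Muro, whereas you re-derive it from Theorem \ref{dk.catm} by identifying $I'\text{-inj}$ and $J'\text{-inj}$ with trivial fibrations and fibrations respectively — but this is the same fact.
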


\begin{proof}
Suppose $h$ is a map in $\algass$.  By definition \eqref{IsJs} $h \in I^S$-inj if and only if:
\begin{enumerate}
\item $\hcat \in I'$-inj and
\item the non-categorical part of $h$ is entrywise a trivial fibration.
\end{enumerate}
Likewise, $h \in J^S$-inj if and only if:
\begin{enumerate}
\item $\hcat \in J'$-inj and
\item the non-categorical part of $h$ is entrywise a fibration.
\end{enumerate}
By \cite{muro15} (4.16)
\[J'\text{-inj} \cap W = I'\text{-inj},\]
where $W$ denotes the class of weak equivalences in $\Catm$.  Together with the above descriptions of $I^S$-inj and $J^S$-inj, this proves the desired equality.
\end{proof}

\begin{theorem}\label{algass.dk.model}
Suppose $\M$ is as in Theorem \ref{dk.catm} and $S \subseteq \bbNsq$ with $(1,1) \in S$.   Then $\algass$ admits the Dwyer-Kan model category structure (Def. \ref{def:algass.model}).  Moreover:
\begin{itemize}
\item This model category structure of $\algass$ is strongly cofibrantly generated with generating (trivial) cofibrations $I^S$ (resp., $J^S$) as in \eqref{IsJs}.  
\item A map in $\algass$ is a trivial fibration if and only if it is a local trivial fibration that is also surjective on colors.  
\item An object in $\algass$ is fibrant if and only if it is locally fibrant.
\end{itemize}
\end{theorem}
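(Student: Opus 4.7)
The plan is to apply Kan's recognition theorem for cofibrantly generated model structures (e.g., \cite{hovey}, Theorem 2.1.19) to the sets $I^S$ and $J^S$ together with the class $W^S$. Bicompleteness of $\algass$ is Proposition~\ref{algso.bicomplete}, and the two-out-of-three property and retract closure for $W^S$ follow immediately: the local weak equivalence condition inherits these entrywise from $\M$, while the condition that $\fcat$ be a weak equivalence in $\Catm$ inherits them from the Dwyer-Kan structure of Theorem~\ref{dk.catm}.

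The key mixing hypotheses are precisely the content of the two preceding lemmas. Lemma~\ref{jcell.in.w} gives $J^S$-cell $\subseteq W^S$. From Lemma~\ref{iinj.jinj.w} I would extract the inclusion $I^S$-inj $\subseteq J^S$-inj; by the standard Galois connection between ``cof'' and ``inj'' this yields $J^S$-cof $\subseteq I^S$-cof, so in particular $J^S$-cell $\subseteq I^S$-cof. Combined with Lemma~\ref{jcell.in.w} this delivers $J^S$-cell $\subseteq W^S \cap I^S$-cof, while the other direction of Lemma~\ref{iinj.jinj.w} delivers $I^S$-inj $\subseteq W^S \cap J^S$-inj. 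These are exactly the non-smallness hypotheses demanded by Kan's theorem.

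The step I expect to require the most care is the smallness of the domains of $I^S$ and $J^S$. The domain $\varnothing$ of the generator $\varnothing \to \tone$ is trivially small. For the remaining generators I would exploit that each $C_{m,n}$ is a left adjoint by the universal property~\eqref{cmn.functor}: the adjunction reduces smallness of $C_{m,n}X$ to smallness of $X$ in $\M$ together with a control on how filtered colimits in $\algass$ compute in terms of underlying objects in $\M$ and underlying color sets; combinatoriality of $\M$ then supplies an ambient cardinal $\kappa$ for which every domain of $I$ and $J$ is $\kappa$-small. With these verifications, Kan's recognition theorem yields the strongly cofibrantly generated Dwyer-Kan model structure generated by $I^S$ and $J^S$.

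The additional claims then fall out directly. A trivial fibration is by definition a map in $I^S$-inj; unwinding~\eqref{IsJs} and invoking the characterization of $I'$-inj from Theorem~\ref{dk.catm} shows that a map in $\algass$ is a trivial fibration iff it is a local trivial fibration that is surjective on colors. For the fibrancy characterization, an object $(\fC,A)$ is fibrant iff the unique map to the terminal object is a fibration; the local fibration condition reduces to fibrancy of every entry of $A$ in $\M$, while the homotopical isofibration condition on the categorical part $\fcat$ is automatic since the target categorical part is the terminal category. Hence fibrant is equivalent to locally fibrant.
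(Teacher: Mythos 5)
Your proposal is correct and follows essentially the same route as the paper: both apply Hovey's recognition theorem (2.1.19) with the bicompleteness, two-out-of-three, and retract checks as routine observations, and both reduce the nontrivial recognition hypotheses to Lemmas~\ref{jcell.in.w} and~\ref{iinj.jinj.w}, treating smallness via combinatoriality of $\M$ and the universal property~\eqref{cmn.functor}. One small imprecision: you present $J^S$-cell $\subseteq W^S \cap I^S$-cof and $I^S$-inj $\subseteq W^S \cap J^S$-inj as ``exactly'' the non-smallness hypotheses, but Hovey's theorem also requires (at least one direction of) $W \cap J$-inj $\subseteq I$-inj; this is still immediate from the equality in Lemma~\ref{iinj.jinj.w}, so the argument goes through, but it deserves a mention.
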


\begin{proof}
To see that the domain of each map in  $I^S \cup J^S$ is small, use Theorem \ref{dk.catm}, the fact that $\M$ is combinatorial, and the characterization \eqref{cmn.functor} of $C_{m,n}$.  The descriptions of trivial fibrations and fibrant objects follow from the Dwyer-Kan model category structure on $\Catm$ and Def. \ref{def:algass.model}.

To see that $\algass$ has the desired model category structure, we check the conditions in the recognition theorem \cite{hovey} (2.1.19) for cofibrantly generated model categories.  First, to see that $\algass$ is bicomplete, we use Prop. \ref{algso.bicomplete} and the fact that $\algass$ is the category of algebras of an operadic collection (Example \ref{ex:enriched.cat.extra}).

To see that the class of weak equivalences in $\algass$ has the $2$-out-of-$3$ property, observe that local weak equivalences in $\algass$ and weak equivalences in $\Catm$ all have the $2$-out-of-$3$ property.  Likewise, the class of weak equivalences in $\algass$ is closed under retracts.

Since $J^S$-cell $\subseteq J^S$-cof by \cite{hovey} (2.1.10), to use the recognition theorem \cite{hovey} (2.1.19) on $\algass$, it remains to show that:
\begin{enumerate}
\item $J^S$-cell $\subseteq W^S$.
\item $J^S$-inj $\cap W^S = I^S$-inj.
\end{enumerate}
Statement (1) holds by Lemma \ref{jcell.in.w}, and statement (2) holds by Lemma \ref{iinj.jinj.w}.
\end{proof}

\begin{remark}\label{rk:algass.cofibration}
In the context of Theorem \ref{algass.dk.model}, a cofibration $f : (\fC,A) \to (\fD,B) \in \algass$ in the Dwyer-Kan model category structure is exactly a map that satisfies the following two conditions.
\begin{enumerate}
\item $f$ is entrywise a cofibration in $\M$.
\item The categorical restriction $\fcat : \Acat \to \Bcat \in \Catm$ (Example \ref{ex:enriched.cat.extra}) is a cofibration in the Dwyer-Kan model category structure on $\Catm$ (Theorem \ref{dk.catm}).  By Remark \ref{rk:catm.cofibration} this means that:
\begin{enumerate}[(i)]
\item $f : \fC \to \fD$ is injective.
\item The induced functor $f_!\Acat \to \Bcat \in \Catmd$ is a cofibration in the projective model category structure on $\Catmd$ (Prop.\ref{catmc-model}).
\end{enumerate}
\end{enumerate}
This follows from the description of trivial fibrations in $\algass$ and in $\Catm$, namely, local trivial fibrations that are surjective on colors in both cases.
\end{remark}

\section{Homotopy Theory of Algebras over Operadic Collections}
\label{sec:alg.opcoll}

In this section, building upon the Dwyer-Kan model category structure in Theorem \ref{algass.dk.model}, we prove our first main result Theorem \ref{algso.dkmodel}, which equips the category of algebras over a suitable operadic collection with the Dwyer-Kan model category structure.  Throughout this section, suppose $\M$ is a bicomplete symmetric monoidal closed category, and $S \subseteq \bbNsq$ with $(1,1) \in S$.

\subsection{Augmented Operadic Collections}
Recall the concept of a map of operadic collections (Def. \ref{def:maps.operad.coll}) and the operadic collection $(S,\Ass)$ whose algebras are $\M$-enriched categories with $S$-entries (Example \ref{ex:enriched.cat.extra}).

\begin{definition}\label{def:aug.opcoll}
An \emph{augmented operadic collection}  in $\M$ is a triple $(S,\sO,\alpha)$ consisting of 
\begin{itemize}
\item an operadic collection $(S,\sO)$ in $\M$ and 
\item a map $\alpha : (S,\Ass) \to (S,\sO)$ of operadic collections 
\end{itemize}
such that, for each set $\fC$, the local restriction functor \eqref{restriction.c}
\[\nicexy{\alg(\Assc) & \alg(\osubc) \ar[l]_-{\alphac^{\ast}}}\]
creates and preserves filtered colimits.  In this case, we call $\alpha$ the  \emph{augmentation}.
\end{definition}

\begin{example}\label{ex:operad-like-augmented}
In the diagram in Example \ref{ex:maps.op.coll}, all the operadic collections, except for (unital linear) and $(S,\sI)$, are augmented.  Indeed, in each case, filtered colimits in $\alg(\osubc)$ are created and preserved by the forgetful functor to $\mtosofc$ by Prop. \ref{algebra-bicomplete}.
\end{example}

\begin{lemma}\label{algso.fil.colim}
Suppose $(S,\sO,\alpha)$ is an augmented operadic collection in $\M$.  Then the restriction functor \eqref{restriction.functor}
\[\nicexy{\algsass & \algso \ar[l]_-{\alpha^\ast}}\]
creates and preserves filtered colimits.
\end{lemma}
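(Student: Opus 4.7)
The argument reduces the global claim to the fiberwise creation and preservation hypothesis in Def. \ref{def:aug.opcoll} by making filtered colimits in $\algso$ and $\algsass$ explicit.

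First I would describe filtered colimits in $\algso$ concretely. Given a filtered diagram $D : \sJ \to \algso$ with $D(j) = (\fC_j, A_j)$, the color-set component of the colimit is $\fC_\infty = \colim_{\sJ} \fC_j$ in $\set$, with coprojections $\iota_j : \fC_j \to \fC_\infty$. Using the color-change adjunctions $(\iota_j)_! \dashv \iota_j^*$ induced by the operad maps $\sO_{\iota_j}$ as in Remark \ref{rk:grothendieck.con}, the diagram $j \mapsto (\iota_j)_! A_j$ lives in the single category $\alg(\sO_{\fC_\infty})$, and its filtered colimit $A_\infty$ is computed at the level of the underlying $S(\fC_\infty)$-colored object by Prop. \ref{algebra-bicomplete}. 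A direct check of the universal property, using the factorization of Prop. \ref{algebra.map.factorization}, shows that $(\fC_\infty, A_\infty)$ is the colimit of $D$ in $\algso$. The parallel recipe inside $\algsass$, using $\As^S_{\iota_j}$ in place of $\sO_{\iota_j}$, produces $(\fC_\infty, \bar A_\infty)$ as the colimit of $\alpha^* D$.

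Second I would check that $\alpha^*$ carries the first recipe to the second. Naturality of $\alpha : \As^S \Rightarrow \sO$ in $\set$ evaluated at $\iota_j$ gives the identity $\sO_{\iota_j} \circ \alpha_{\fC_j} = \alpha_{\fC_\infty} \circ \As^S_{\iota_j}$ in $\operadm$. Passing to induced restrictions and then to their left adjoints yields a Beck-Chevalley-type natural isomorphism
$$\alpha_{\fC_\infty}^* \circ (\iota_j)_!^{\sO} \cong (\iota_j)_!^{\As} \circ \alpha_{\fC_j}^*.$$
Combined with the fiberwise preservation of filtered colimits by $\alpha_{\fC_\infty}^*$ from Def. \ref{def:aug.opcoll}, this identifies $\alpha_{\fC_\infty}^* A_\infty$ with $\bar A_\infty$, proving preservation of filtered colimits by $\alpha^*$.

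For creation, suppose $\alpha^* D$ admits a colimit cocone $(\fC_\infty, \bar A_\infty)$ in $\algsass$. Applying the recipe of the first paragraph to $\algsass$, the color-set component of such a colimit is forced to be $\fC_\infty = \colim_{\sJ} \fC_j$. The fiberwise creation property of $\alpha_{\fC_\infty}^*$ (Def. \ref{def:aug.opcoll}) then uniquely produces an $\sO_{\fC_\infty}$-algebra $A_\infty$ with $\alpha_{\fC_\infty}^* A_\infty = \bar A_\infty$ together with a unique compatible cocone, which the preceding paragraphs identify as a colimit cocone in $\algso$. The principal obstacle is establishing the Beck-Chevalley identity in the second paragraph; the rest is routine bookkeeping via Prop. \ref{algebra-bicomplete} and the explicit color-set recipe.
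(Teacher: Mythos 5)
Your general strategy --- make filtered colimits in $\algso$ and $\algsass$ explicit via the colimit color set $\fC_\infty$, then appeal to the fiberwise hypothesis in Def.~\ref{def:aug.opcoll} --- is essentially the paper's. The problem is with the Beck--Chevalley isomorphism you assert in the second paragraph,
\[
\alpha_{\fC_\infty}^{\ast} \circ (\iota_j)_{!}^{\sO} \;\cong\; (\iota_j)_{!}^{\As} \circ \alpha_{\fC_j}^{\ast}.
\]
Naturality of $\alpha$ gives only the equality of right adjoints $\alpha_{\fC_j}^{\ast}\,\sO_{\iota_j}^{\ast} = (\Ass_{\iota_j})^{\ast}\,\alpha_{\fC_\infty}^{\ast}$; its mate is a canonical \emph{natural transformation} $(\iota_j)_{!}^{\As}\alpha_{\fC_j}^{\ast} \Rightarrow \alpha_{\fC_\infty}^{\ast}(\iota_j)_{!}^{\sO}$, but invertibility is an extra condition that fails here. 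For a concrete counterexample, take $\M = \set$, $S = \bbN \times \{1\}$, $\sO$ the operadic collection for colored operads, $\fC_j = \{a,b\}$, $\fC_\infty = \{\ast\}$, $\iota_j\colon a,b \mapsto \ast$, and $A_j$ the free $\{a,b\}$-colored operad on one generator $f$ with output $a$ and inputs $(b,b)$. Since $f$ has output $a$ and only $b$-inputs, no composites of $f$ with itself exist, so every entry of $A_j$ with at least three inputs is empty. The operad $\Ass_{\fC_j}$ has no operations outside biarity $(1,1)$, so the functor $(\iota_j)_{!}^{\As}$ acts on the non-categorical part purely by left Kan extension of the underlying $S(\fC_j)$-colored object; hence every entry of $(\iota_j)_{!}^{\As}\alpha_{\fC_j}^{\ast} A_j$ with at least three inputs is still empty. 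By contrast $(\iota_j)_{!}^{\sO} A_j$ is the free one-colored operad on a binary operation, whose ternary entry is nonempty (it contains $f\circ_1 f$, $f\circ_2 f$), and $\alpha_{\fC_\infty}^{\ast}$ does not change underlying objects. The two sides of your claimed isomorphism have nonisomorphic ternary entries.

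The mechanism is that once $\iota_j$ identifies colors, the operad structure on the $\sO$-side manufactures new cross-color composites which the $\Ass$-side, having no operations of higher arity, cannot produce by Kan extension alone. What rescues the lemma is the filteredness of $\cJ$: any identification $\iota_j(a) = \iota_j(b)$ is already realized at some finite stage $j \to j'$, and $A_{j'}$ --- being itself an $\sO_{\fC_{j'}}$-algebra --- already contains the relevant composites, so they do appear in $\overline{A}_{\infty} = \colim_j (\iota_j)_{!}^{\As}\alpha_{\fC_j}^{\ast}A_j$ after passing to the colimit. You therefore need to show that the colimit-level comparison map $\overline{A}_{\infty} \to \alpha_{\fC_\infty}^{\ast}A_{\infty}$ is an isomorphism even though the termwise mates are not, an argument that uses filteredness in an essential way. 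As written, the termwise Beck--Chevalley identity is false, and the ``principal obstacle'' you flag is a genuine gap.
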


\begin{proof}
Any given filtered colimit in $\algso$ is computed in $\alg(\osubc)$ for some set $\fC$, and similarly for filtered colimits in $\algsass$.  The assertion now follows because the functor  $\alphac^* : \alg(\osubc) \to \algassc$ creates and preserves filtered colimits.
\end{proof}

\subsection{Fibrant Replacement}

Here we construct a fibrant replacement functor in the category of algebras over an augmented operadic collection.  This is one ingredient that we will need to establish the desired Dwyer-Kan model category structure on the algebra category.  First we define weak equivalences and fibrations in the algebra category.

\begin{definition}\label{def:algso.weq.fib}
Suppose $(S,\sO,\alpha)$ is an augmented operadic collection in a symmetric monoidal closed category $\M$ with a given model category structure.  
\begin{enumerate}
\item  A map $f \in \algso$ is a \emph{weak equivalence} (resp., \emph{fibration}) if $\alpha^*f \in \algsass$ is a weak equivalence (resp., fibration) in the sense of Def. \ref{def:algass.model}.
\item An object in $\algso$ is \emph{fibrant} if the unique map to the terminal object is a fibration.
\end{enumerate}
\end{definition}

\begin{remark}
In the context of Def. \ref{def:algso.weq.fib}, the restriction functor \eqref{restriction.functor}
\[\nicexy{\algsass & \algso \ar[l]_-{\alphastar}}\]
does not change the underlying objects.  In view of Remark \ref{rk:algass.weakeq}, the following statements are true.
\begin{enumerate}
\item $f \in \algso$ is a weak equivalence if and only if (i) $f$ is entrywise a weak equivalence in $\M$ and (ii) $\pizero (\alphastar f )^{\cat} \in \Cat$ is essentially surjective.
\item $f \in \algso$ is a fibration if and only if (i) $f$ is entrywise a fibration in $\M$ and (ii) $\pizero (\alphastar f )^{\cat} \in \Cat$ is an isofibration.
\end{enumerate}
\end{remark}

\begin{remark}\label{rk.algso.fibrant}
If $\M$ is as in Theorem \ref{dk.catm}, then an object in $\algso$ is fibrant if and only if it is locally fibrant because this property is true in $\algsass$ by Theorem \ref{algass.dk.model}.  Similarly, a map in $\algso$ is a trivial fibration if and only if it is a local trivial fibration that is surjective on colors.
\end{remark}

Recall the concept of a (lax symmetric monoidal) fibrant replacement functor from Def. \ref{def:fibrant.replacement}.

\begin{lemma}\label{algso.fib.replace}
Suppose $\M$ is as in Theorem \ref{dk.catm}, and $(S,\sO, \alpha)$ is an augmented operadic collection in $\M$.  If $\M$ has a lax symmetric monoidal fibrant replacement functor, then the category $\algso$ admits a fibrant replacement functor.
\end{lemma}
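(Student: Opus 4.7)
The plan is to apply the lax symmetric monoidal fibrant replacement $(R, \eta)$ of $\M$ entrywise to each $(\fC, A) \in \algso$, producing an entrywise-fibrant replacement that is the identity on color sets. The essential task is to promote the entrywise application of $R$ from the underlying category $\M$ to an endofunctor of $\algso$; once that is done, fibrancy will follow from Remark~\ref{rk.algso.fibrant}, and the weak-equivalence property from the fact that $\eta$ is the identity on color sets.

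For the lift, fix a set $\fC$. I would show that $(R, \eta)$ induces an endofunctor $\Rbar$ of $\alg(\osubc)$ sending an $\osubc$-algebra $A$ to the object $\Rbar A$ with entries $(\Rbar A)(t) = R(A(t))$ for $t \in \sofc$, equipped with the $\osubc$-structure maps
\[
\osubc\tzerotonetn \otimes RA(t_1) \otimes \cdots \otimes RA(t_n) \xrightarrow{\eta_{\osubc} \otimes \Id} R\osubc\tzerotonetn \otimes RA(t_1) \otimes \cdots \otimes RA(t_n),
\]
followed by the iterated lax monoidal constraint $R^{n+1}$ and then $R$ applied to the original algebra structure map of $A$. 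The requisite operad-action axioms (associativity, unity, equivariance) amount to a formal diagram chase combining the lax symmetric monoidal structure of $R$, the monoidality of $\eta$, and naturality of $\eta$. Because these constructions are natural in the color set $\fC$ through the functor $\sO \colon \set \to \operadm$, they assemble into an endofunctor $\Rbar \colon \algso \to \algso$ that is the identity on color sets, together with a natural transformation $\eta \colon \Id \to \Rbar$ whose component at $(\fC, A)$ is the identity on $\fC$ and is $\eta_{A(t)}$ on each entry.

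Finally, I would verify the two defining properties of a fibrant replacement functor. Fibrancy of $\Rbar(\fC, A)$ is immediate from Remark~\ref{rk.algso.fibrant}, since each entry $R(A(t))$ is fibrant in $\M$ by definition of $(R, \eta)$. To see that $\eta \colon (\fC, A) \to \Rbar(\fC, A)$ is a weak equivalence in $\algso$, I invoke Def.~\ref{def:algso.weq.fib}: it suffices that $\alphastar \eta$ is a weak equivalence in $\algsass$. By Def.~\ref{def:algass.model} and Remark~\ref{rk:algass.weakeq}, this amounts to (i) $\alphastar \eta$ being a local weak equivalence, which holds entrywise since each component $\eta_{A(t)}$ is a weak equivalence in $\M$; and (ii) $\pizero(\alphastar \eta)^{\cat}$ being essentially surjective, which is automatic because $\eta$ is the identity on the color set $\fC$.

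The only nontrivial step is the lift of $(R, \eta)$ from $\M$ to $\algso$; this is a purely formal consequence of lax symmetric monoidality and requires no further hypothesis on $\M$, $(S, \sO)$, or the augmentation $\alpha$ beyond what is already assumed. In particular, the augmentation condition on filtered colimits plays no role here, although it will be needed for the subsequent existence theorem for the model structure.
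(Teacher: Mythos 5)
Your proposal is correct and follows essentially the same route as the paper's proof: apply $R$ entrywise and promote the result to an $\osubc$-algebra via the lax symmetric monoidal structure of $R$, the monoidality of $\eta$, and restriction along $\etac \colon \osubc \to R\osubc$. The paper packages the same construction by citing Theorem 12.11 and Lemma 13.26 of \cite{jy2} for the $R\osubc$-algebra structure on $RA$ and the underlying-object-preserving restriction, where you instead write out the composite structure map explicitly; the two are identical, and your closing observation that the filtered-colimit condition on the augmentation is not used here is also accurate.
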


\begin{proof}
Suppose $(\fC,A) \in \algso$, so $A$ is an $\osubc$-algebra.  Write $\eta : \Id \to R$ for the given lax symmetric monoidal fibrant replacement functor in $\M$.  Applying $R$ entrywise to $A$ yields $RA \in \mtosofc$.  Since $R$ is lax symmetric monoidal, $RA$ naturally becomes an $R\osubc$-algebra, where $R\osubc$ is the $\sofc$-colored operad obtained by applying $R$ entrywise to $\osubc$ \cite{jy2} (Theorem 12.11).  Since $\eta$ is a monoidal natural transformation, applying it entrywise to $\osubc$ yields a map
\[\nicexy{\osubc \ar[r]^-{\etac} & R\osubc}\]
of $\sofc$-colored operads.  The induced functor on algebra categories
\[\nicexy{\algosubc & \algrosubc \ar[l]_-{\etacstar}}\]
does not change the underlying objects \cite{jy2} (Lemma 13.26).  The application of $\eta$ entrywise to $A$ now yields a map
\[\nicexy{A \ar[r]^-{\eta_A} & \etacstar(RA) \in \algosubc}\]
that is entrywise a weak equivalence in $\M$.

Using the map $\eta_A$, we will check that
\[\bigl(\fC,\etacstar(RA)\bigr) \in \algso\]
is the desired functorial fibrant replacement of $(\fC,A)$.  Functoriality is a direct consequence of the construction.  Let $\alpha$ be the augmentation of $(S,\sO)$.  Regarded in $\algso$, the map $\eta_A$ is the identity map on the color set $\fC$.  Since the restriction functor $\alpha^*$ \eqref{restriction.functor} does not change the underlying objects, the map $\alphastar\eta_A \in \algsass$ is a weak equivalence by Remark \ref{rk:algass.weakeq}.  So the map $\eta_A \in \algso$ is a weak equivalence.

It remains to show that $\etacstar(RA) \in \algso$ is a fibrant object, i.e., locally fibrant.  Since $\etacstar$ does not change the underlying objects, we just need to observe that $RA$ is locally fibrant 
\end{proof}

\subsection{Isofibration Axiom}
The following condition will be needed later in the main theorem.

\begin{definition}\label{def:cat.fib.axiom}
Suppose $\M$ is a symmetric monoidal closed category with a given model category structure.
\begin{enumerate}
\item A \emph{simple fibration} in $\Catm$ is a map $f : (\fC,A) \to (\fC,B) \in \Catm$ such that:
\begin{enumerate}
\item On objects $f$ is the identity map.
\item For each pair of objects $x$ and $y$ in $A$, the map $f : A(x,y) \to B(x,y)$ is a fibration between fibrant objects in $\M$.
\end{enumerate}
\item We say that $\M$ satisfies the \emph{isofibration axiom} if each simple fibration in $\Catm$ is a homotopical isofibration in the sense of Def. \ref{def:pizero}.
\end{enumerate}
\end{definition}

\begin{example}
The category of simplicial sets with the usual Kan-Quillen model category structure \cite{quillen.homotopical} (II.3 Theorem 3) satisfies the isofibration axiom.  This is a consequence of basic properties of Kan fibrations.
\end{example}

\begin{example}
On the other hand, in general the isofibration axiom does not come for free.  For example,  
the monoidal category of sets with the trivial model category structure (in which weak equivalences are the bijections and every map is both a fibration and a cofibration) does \underline{not} satisfy the isofibration axiom.  Indeed, in this case a simple fibration is a functor that is the identity map on objects.  The functor $\pizero : \Cat \to \Cat$ is the identity functor.  So a simple fibration $f : A \to B \in \Cat$ is a homotopical isofibration if and only if, for each pair of objects $x$ and $y$ in $A$, every isomorphism in $B(x,y)$ can be lifted back to $A(x,y)$ via $f$.  This is not automatic. 

For instance, take $A$ to be the category with two objects $\{0,1\}$ and no non-identity maps.  Take $B$ to be the category with objects $\{0,1\}$ whose only non-identity maps are an isomorphism $v \in B(0,1)$ and its inverse $v^{-1} \in B(1,0)$.  Then the object-preserving functor $f : A \to B$ is a simple fibration but not a homotopical isofibration because the isomorphism $v \in B(0,1)$ cannot be lifted back to $A(0,1) = \varnothing$ via $f$.
\end{example}

In Prop. \ref{isofibration.axiom.extend} below we will develop a simple criterion for checking the isofibration axiom, for which we first need some definitions.  An adjunction with left adjoint $L$ and right adjoint $R$ is denoted by $(L,R)$.  

\begin{definition}\label{def:quillen.adjunction}
Suppose $L : \M \adjoint \N : R$ is an adjunction between model categories.
\begin{enumerate}
\item Recall that $(L,R)$ is called a \emph{Quillen adjunction} if the right adjoint preserves both fibrations and trivial fibrations.  In this case, $L$ (resp., $R$) is called a \emph{left} (resp., \emph{right}) \emph{Quillen functor}.
\item Suppose that $(L,R)$ is a Quillen adjunction between monoidal categories.  Then $(L,R)$ is called a \emph{mild monoidal Quillen adjunction} if the following conditions hold:
\begin{enumerate}
\item The right adjoint $R$ is lax monoidal (Def. \ref{def:monoidal.functor}).
\item For some cofibrant replacement $q : Q\tensorunitm \to \tensorunitm$ of the monoidal unit in $\M$, the composite
\[\nicexy{LQ\tensorunitm \ar[r]^-{Lq} & L\tensorunitm \ar[r]^-{\Rbar^0} & \tensorunitn}\]
is a weak equivalence in $\N$, in which $\Rbar^0$ is the adjoint of the structure map $R^0 : \tensorunitm \to R\tensorunitn$.
\end{enumerate}
\end{enumerate}
\end{definition}

The following observation is useful for checking the isofibration axiom.

\begin{proposition}\label{isofibration.axiom.extend}
Suppose $L : \M \adjoint \N : R$ is a mild monoidal Quillen adjunction between monoidal categories.  If $\M$ satisfies the isofibration axiom, then so does $\N$.
\end{proposition}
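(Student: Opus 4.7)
The strategy is to transfer a simple fibration in $\Catn$ back to $\Catm$ via the lax monoidal right adjoint $R$ and then invoke the isofibration axiom in $\M$. Applying $R$ entrywise to hom-objects defines a functor $R_* : \Catn \to \Catm$: for $A \in \Catn$, the enriched category $R_*A$ has the same object set as $A$ and hom-objects $RA(x,y)$, with composition
\[
RA(y,z) \otimes RA(x,y) \longrightarrow R\bigl(A(y,z) \otimes A(x,y)\bigr) \longrightarrow RA(x,z)
\]
coming from the lax structure of $R$, and units $\tensorunitm \to R\tensorunitn \to RA(x,x)$.

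Given a simple fibration $f : (\fC,A) \to (\fC,B)$ in $\Catn$, I claim $R_* f$ is a simple fibration in $\Catm$. It remains the identity on the object set $\fC$, and for each pair $x,y$, the $\N$-hom $A(x,y)$ and $B(x,y)$ are fibrant and $f : A(x,y) \to B(x,y)$ is a fibration in $\N$; since $R$ is a right Quillen functor, $RA(x,y)$ and $RB(x,y)$ are fibrant in $\M$ and $R_* f : RA(x,y) \to RB(x,y)$ is a fibration between fibrant objects. The isofibration axiom in $\M$ therefore guarantees that $\pizerom(R_* f)$ is an isofibration in $\Cat$.

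The final step is to construct a natural isomorphism $\pizerom \circ R_* \cong \pizeron$ on the subcategory of locally fibrant $\N$-enriched categories. For such $A$ and $x,y$, the derived adjunction associated to $(L,R)$ yields natural bijections
\[
\pizerom(R_* A)(x,y) = \Ho(\M)\bigl(\tensorunitm, RA(x,y)\bigr) \cong \Ho(\N)\bigl(LQ\tensorunitm, A(x,y)\bigr) \cong \Ho(\N)\bigl(\tensorunitn, A(x,y)\bigr),
\]
where the first isomorphism uses that $A(x,y)$ is fibrant and $Q\tensorunitm$ is a cofibrant replacement in $\M$, and the second uses the mild monoidal Quillen hypothesis that $LQ\tensorunitm \to \tensorunitn$ is a weak equivalence in $\N$. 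Once this bijection is upgraded to a functor-level isomorphism, $\pizeron(f) \cong \pizerom(R_* f)$ is an isofibration, so $f$ is a homotopical isofibration, proving that $\N$ satisfies the isofibration axiom.

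The main obstacle will be verifying that the bijection above is compatible with the categorical structures, that is, that units and composition in $\pizerom(R_* A)$ — built from the lax monoidal maps $R^0$ and $R^2$ — correspond under the derived adjunction to units and composition in $\pizeron(A)$. This is where the mild monoidal hypothesis is essential: it ensures that $\mathbf{L}L$ takes $\tensorunitm$ to an object isomorphic to $\tensorunitn$ in $\Ho(\N)$ in a way that is coherent with the oplax monoidal comparison maps for $L$, so that the derived structure on $R$ becomes strong monoidal after passing to the homotopy category, and the composition diagrams defining $\pizerom(R_* A)$ and $\pizeron(A)$ agree.
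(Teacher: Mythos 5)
Your proposal follows essentially the same approach as the paper: apply $R$ entrywise to turn the simple fibration $f$ in $\Catn$ into a simple fibration $Rf$ in $\Catm$, invoke the isofibration axiom in $\M$, and then transport the isofibration property back along the natural bijections
\[
\pizerom(RA)(x,y) = \Ho(\M)\bigl(\tensorunitm, RA(x,y)\bigr) \cong \Ho(\N)\bigl(LQ\tensorunitm, A(x,y)\bigr) \cong \Ho(\N)\bigl(\tensorunitn, A(x,y)\bigr) = \pizeron(A)(x,y),
\]
which use the derived adjunction together with the weak equivalence $LQ\tensorunitm \to \tensorunitn$. The only real difference is your explicit flagging of the compatibility check — namely that these hom-set bijections assemble into isomorphisms of the categories $\pizerom(R_*A) \cong \pizeron(A)$ respecting identities and composition, so that the isofibration condition actually transports. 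The paper states the commutative square and concludes immediately without dwelling on this; your observation is correct that one should verify it (it follows from the lax monoidal structure of $R$ together with the monoidality of the derived adjunction in the mild monoidal Quillen setting, as you sketch), but it is not a gap in your argument since you identify the missing verification and indicate correctly how it goes.
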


\begin{proof}
Suppose $f : (\fC,A) \to (\fC,B) \in \Catn$ is a simple fibration, so it is the identity map on colors and is entrywise a fibration between fibrant objects in $\N$.  We must show that $\pizero(f) \in \Cat$ is an isofibration.  Since the right Quillen functor $R$ is lax monoidal, applying it entrywise to $f$ yields a simple fibration
\[\nicexy{(\fC, RA) \ar[r]^-{Rf} & (\fC,RB) \in \Catm.}\]
So $\pizero(Rf) \in \Cat$ is an isofibration.  For any pair of colors $c,d \in \fC$, write $\dcsingle$ for $(c,d)$ and consider the commutative diagram
\[\begin{small}\nicexy@C-.8cm{\pizero(A)\dcsingle = \Ho(\N)\bigl(\tensorunitn, A\dcsingle\bigr) \ar[d]_-{\pizero(f)} \ar[r]^-{\cong} & \Ho(\N)\bigl(LQ\tensorunitm, A\dcsingle\bigr) \cong \Ho(\M)\bigl(\tensorunitm, RA\dcsingle \bigr) = \pizero(RA)\dcsingle  \ar[d]^-{\pizero(Rf)}\\
\pizero(B)\dcsingle  = \Ho(\N)\bigl(\tensorunitn, B\dcsingle \bigr) \ar[r]^-{\cong} & \Ho(\N)\bigl(LQ\tensorunitm, B\dcsingle \bigr) \cong \Ho(\M)\bigl(\tensorunitm, RB\dcsingle \bigr) = \pizero(RB)\dcsingle}\end{small}\]
of sets.  The top isomorphisms are induced by the weak equivalence $LQ\tensorunitm \to \tensorunitn$ and the derived adjuction of the Quillen adjunction $(L,R)$, respectively, and similarly for the bottom isomorphisms.  Since $\pizero(Rf)$ is an isofibration, so is $\pizero(f)$.
\end{proof}

\begin{example}\label{ex:isofibration.adjunction}
Recall that $\sset$, the category of simplicial sets with the Kan-Quillen model category structure, satisfies the isofibration axiom.
\begin{enumerate}
\item 
Consider the mild monoidal Quillen adjunction \cite{rezk} (6.1)
\[\nicexy{\sset \ar@<2pt>[r]^-{\pi} & \Cat \ar@<2pt>[l]^-{\mu}}\]
between simplicial sets and $\Cat$ with the folk model structure.  Here $\pi$ sends a simplicial set to its fundamental groupoid, and $\mu$ sends a category to the simplicial nerve of the subcategory of isomorphisms.  Then Prop. \ref{isofibration.axiom.extend} applies to show that $\Cat$ satisfies the isofibration axiom.
\item
Suppose $\smod(k)$ is the monoidal model category of simplicial $k$-modules for some commutative unital ring $k$ \cite{quillen.homotopical} (II.4 Theorem 4).  The mild monoidal Quillen adjunction
\[\nicexy{\sset \ar@<2pt>[r] & \smod(k) \ar@<2pt>[l]}\]
is induced levelwise by the free-forgetful adjunction between sets and $k$-modules.  Then Prop. \ref{isofibration.axiom.extend} applies to show that $\smod(k)$ satisfies the isofibration axiom.
\item 
With $k$ as above, suppose $\chk$ is the monoidal model category of non-negative graded chain complexes of $k$-modules; see \cite{ds} or \cite{quillen.homotopical} (4.11-4.12).  Suppose  
\begin{equation}\label{dold-kan}
\nicexy{\smod(k) \ar@<2pt>[r]^-{N} & \chk \ar@<2pt>[l]^-{\Gamma}}
\end{equation}
is the Dold-Kan correspondence \cite{dold,kan}.  Then Prop. \ref{isofibration.axiom.extend} and the previous case show that $\chk$ satisfies the isofibration axiom.
\item
With $k$ as above, suppose $\Chk$ is the monoidal model category of unbounded chain complexes of $k$-modules \cite{hovey} (2.3).  Suppose  
\[\nicexy{\chk \ar@<2pt>[r]^-{i} & \Chk \ar@<2pt>[l]^-{C_0}}\]
is the mild monoidal Quillen adjunction such that $i$ is the inclusion functor and $C_0$ is the connective cover functor \cite{shipley07}.  Then Prop. \ref{isofibration.axiom.extend} and the previous case show that $\Chk$ satisfies the isofibration axiom.
\end{enumerate}
\end{example}

\subsection{Path Objects}
Next we construct path objects in the category of algebras over an augmented operadic collection.  This is another ingredient for the main theorem.

\begin{definition}\label{def:path.object}
Suppose $\calc$ is a category equipped with a class of maps called \emph{weak equivalences} and another class of maps called \emph{fibrations}.  A \emph{path object} for an object $X$ in $\calc$ is a factorization of the diagonal map
\[\nicexy{X \ar[r]^-{\sim} \ar@{-<} `u[rr] `[rr]|-{\text{diagonal}} [rr]
& P(X) \ar@{>>}[r] & X \times X}\]
in $\calc$ into a weak equivalence followed by a fibration \cite{hirschhorn} (7.3.2(3)).
\end{definition}

For an augmented operadic collection $(S,\sO,\alpha)$ (Def. \ref{def:aug.opcoll}), recall the definitions of weak equivalences and fibrations in $\algso$ in Def. \ref{def:algso.weq.fib}.

\begin{lemma}\label{algso.path}
Suppose $\M$ is as in Theorem \ref{dk.catm} that also satisfies the isofibration axiom (Def. \ref{def:cat.fib.axiom}) and has a cocommutative comonoidal interval \eqref{comonoidal.interval}.  Suppose $(S,\sO,\alpha)$ is an augmented operadic collection in $\M$.  Then every fibrant object in $\algso$ has a path object that is preserved by the restriction functor \eqref{restriction.functor}
\[\nicexy{\algsass & \algso \ar[l]_-{\alpha^\ast}.}\]
\end{lemma}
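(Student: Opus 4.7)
The plan is to build a path object entrywise by cotensoring with the cocommutative comonoidal interval $H$ of \eqref{comonoidal.interval}, adapting the classical Berger--Moerdijk interval construction for algebras over operads to the present colored-operadic setting.

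First I would observe that for a fibrant object $(\fC, A) \in \algso$, regarded as an $\osubc$-algebra, the entrywise cotensor
\[(A^H)\dc \defn (A\dc)^H, \qquad \dc \in \sofc,\]
inherits a canonical $\osubc$-algebra structure from that of $A$ because $H$ is a counital, coassociative, and cocommutative comonoid in $\M$: the iterated comultiplication $H \to H^{\otimes n}$ is well-defined by coassociativity and $\Sigma_n$-equivariant by cocommutativity, and the standard internal-hom/tensor adjunction together with this comultiplication converts the $\osubc$-algebra structure on $A$ into one on $A^H$. Since $\beta : H \to \tensorunit$ and $\epsilon : \oneone \to H$ are comonoid maps, applying the cotensor $A^{(-)}$ then yields a factorization in $\algso$ of the diagonal:
\[A \;\cong\; A^{\tensorunit} \;\xrightarrow{A^{\beta}}\; A^H \;\xrightarrow{A^{\epsilon}}\; A^{\oneone} \;\cong\; A \times A.\]

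Next I would check that this is a weak equivalence followed by a fibration in the sense of Def. \ref{def:algso.weq.fib}. Entrywise, fibrancy of $A$ yields fibrancy of each $A\dc$ (Rem. \ref{rk.algso.fibrant}), and cofibrancy of $\tensorunit$, $\oneone$, and $H$ (the last because $\epsilon$ is a cofibration from the cofibrant object $\oneone$), combined with the pushout-product axiom and Ken Brown's lemma applied to the right Quillen functor $(A\dc)^{(-)}$, implies that $(A\dc)^{\beta}$ is a weak equivalence and $(A\dc)^{\epsilon}$ is a fibration between fibrant objects for every $\dc \in \sofc$. On colors both maps are the identity of $\fC$, so $A^{\beta}$ is a local weak equivalence whose categorical restriction is identity on objects and is therefore trivially essentially surjective on $\pizero$; meanwhile the categorical restriction $(\alphastar A^H)^{\cat} \to (\alphastar(A \times A))^{\cat}$ of $A^{\epsilon}$ is identity on colors and entrywise a fibration between fibrant objects, i.e., a simple fibration in the sense of Def. \ref{def:cat.fib.axiom}. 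The isofibration axiom then promotes it to a homotopical isofibration, as required.

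Preservation under $\alphastar$ is then automatic: this restriction functor changes neither the underlying $\mtosofc$-object nor the color set, and in $\algsass$ the relevant notions of weak equivalence and fibration (Def. \ref{def:algass.model}) are characterized by the same local-plus-categorical-part conditions just verified, so $\alphastar A^H$ is a path object for $\alphastar A$. The main technical step is the construction of the $\osubc$-algebra structure on $A^H$, where cocommutativity of $H$ is indispensable in order to make the iterated comultiplication $\Sigma_n$-equivariant; beyond that, the isofibration axiom is exactly the extra hypothesis needed to upgrade the entrywise fibration $A^{\epsilon}$ to a fibration in the sense of Def. \ref{def:algso.weq.fib}.
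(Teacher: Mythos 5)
Your proposal matches the paper's own proof: the paper likewise cotensors entrywise with the cocommutative comonoidal interval, endows $A^H$ with an $\osubc$-algebra structure (phrased there via restriction along the induced maps of $\sofc$-colored operads $\osubc \cong \osubc^{\tensorunit} \to \osubc^H \to \osubc^{\oneone}$, which unwinds to exactly the direct construction using the iterated comultiplication that you describe), and verifies the two required conditions using the pushout-product axiom, Ken Brown's lemma, and the isofibration axiom. The observation that $\alphastar$-preservation is automatic because the restriction functor changes neither the underlying objects nor the colors is also the paper's argument.
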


\begin{proof}
Suppose $(\fC,A) \in \algso$ is a fibrant object, so $A$ is an $\osubc$-algebra that is entrywise a fibrant object in $\M$ (Remark \ref{rk.algso.fibrant}).  Taking the entrywise internal hom from the given cocommutative comonoidal interval into $A$ yields a factorization
\begin{equation}\label{ca.path}
\nicexy{A \cong A^{\tensorunit} \ar[r]^-{\betastar} \ar@{-<} `u[rr] `[rr]|-{\text{diagonal}} [rr]
& A^H \ar[r]^-{\epsilonstar} & A^{\tensorunit \amalg \tensorunit} \cong A \times A}
\end{equation}
of the diagonal map of $A \in \mtosofc$.  Here we use the notation $X^Y$ for the internal hom object $\Homm(Y,X)$ in $\M$.  We will show that the factorization \eqref{ca.path} can be upgraded to a path object for $(\fC,A)$.  Specifically, we will equip each of the objects $A^H$ and $A^{\oneone}$ with an $\osubc$-algebra structure in such a way that $\betastar$ is a weak equivalence and $\epsilonstar$ is a fibration in $\algso$.

Since the assumed comonoidal interval is cocommutative,  taking the entrywise internal hom from it into the $\sofc$-colored operad $\osubc$ yields maps
\begin{equation}\label{interval.to.osubc}
\nicexy{\osubc \cong \osubc^{\tensorunit} \ar[r]^-{\betastaro} & \osubc^H \ar[r]^-{\epsilonstaro} & \osubc^{\tensorunit \amalg \tensorunit}}
\end{equation}
of $\sofc$-colored operads.  By the naturality of internal hom and the fact that $H$ is a counital coassociative cocommutative comonoid, $A^H$ naturally becomes an $\osubc^H$-algebra.  By restricting along the map $\betastaro$ in \eqref{interval.to.osubc}, $A^H$ becomes an $\osubc$-algebra such that the map $\betastar : A \to A^H$ in \eqref{ca.path} is now a map of $\osubc$-algebras.

To show that the map
\[\nicexy{(\fC,A) \ar[r]^-{\betastar} & (\fC,A^H) \in \algso}\] 
is a weak equivalence, we need to show that $\alphastar \betastar \in \algsass$ is a weak equivalence in the sense of Def. \ref{def:algass.model}.  Since the restriction functor $\alphastar$ does not change the underlying map of $\betastar$ and the latter is the identity map on colors, it is enough to show that $\betastar$ is entrywise a weak equivalence in $\M$.  To see that $\betastar$ is entrywise a weak equivalence, it is enough to observe that $\beta : H \to \tensorunit$ is a weak equivalence between cofibrant objects and that $A$ is entrywise fibrant and to use the pushout product axiom (Def. \ref{def:ppaxiom}) and Ken Brown's Lemma \cite{hovey} (1.1.12).

Similarly, $A \times A \cong A^{\tensorunit \amalg \tensorunit}$ naturally becomes an $\osubc^{\tensorunit \amalg \tensorunit}$-algebra.  By restricting along the map $\epsilonstaro \betastaro$ in \eqref{interval.to.osubc}, $A \times A$ becomes an $\osubc$-algebra such that the map $\epsilonstar$ in \eqref{ca.path} is now a map of $\osubc$-algebras.  It remains to show that
\[\nicexy{(\fC,A^H) \ar[r]^-{\epsilonstar} & (\fC, A \times A) \in \algso}\]
is a fibration, i.e., that $\alphastar \epsilonstar \in \algsass$ is a fibration in the sense of Def. \ref{def:algass.model}.  Since $\epsilon : \oneone \to H$ is a cofibration between cofibrant objects and $A$ is entrywise fibrant, the map $\epsilonstar$ in \eqref{ca.path} is entrywise a fibration between fibrant objects in $\M$ by the pushout product axiom (Def. \ref{def:ppaxiom}).  Since the restriction functor $\alphastar$ does not change the underlying map, $\alphastar \epsilonstar$ is entrywise a fibration.  Furthermore, the restriction to the categorical part $(\alphastar \epsilonstar)^{\cat} \in \Catm$ is a simple fibration (Def. \ref{def:cat.fib.axiom}).  The isofibration axiom in $\M$ now implies that $(\alphastar \epsilonstar)^{\cat}$ is a homotopical isofibration.  Therefore, the map $\alphastar \epsilonstar \in \algsass$ is a fibration.

With the $\osubc$-algebra structure above, we have shown that the factorization in \eqref{ca.path} is a path object for $(\fC,A) \in \algso$.  Since weak equivalences and fibrations in $\algso$ are defined by their $\alphastar$-restrictions (Def. \ref{def:algso.weq.fib}), this path object is preserved by the restriction functor $\alphastar$.
\end{proof}

\subsection{Dwyer-Kan Model Structure}
We will use the following model category structure lifting result from \cite{jy} (3.3).  

\begin{lemma}\label{model.lift}
Suppose $L : \M \adjoint \N : R$ is an adjunction such that:
\begin{itemize}
\item $\M$ is a strongly cofibrantly generated model category with set of generating (trivial) cofibrations $I$ (resp., $J$).
\item $\N$ has all small limits and colimits, and $R$ creates and preserves filtered colimits.
\item A map $f \in \N$ is called a weak equivalence (resp., fibration) if and only if $Rf \in \M$ is a  weak equivalence (resp., fibration).  An object in $\N$ is called fibrant if and only if the unique map to the terminal object is a fibration.
\item There is a fibrant replacement functor in $\N$ (Def. \ref{def:fibrant.replacement}).
\item Every fibrant object in $\N$ has a path object (Def. \ref{def:path.object}) that is preserved by $R$.
\end{itemize}
Then $\N$ admits a cofibrantly generated model category structure with the above weak equivalences and fibrations such that:
\begin{itemize}
\item The set of generating (trivial) cofibrations in $\N$ is $LI$ (resp., $LJ$).
\item $(L,R)$ is a Quillen adjunction.
\end{itemize}
\end{lemma}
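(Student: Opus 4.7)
The plan is to invoke a standard transferred model structure theorem (compare Kan, Crans, and Schwede--Shipley \cite{ss}) via the recognition criterion for cofibrantly generated model categories \cite{hovey} (2.1.19), taking $LI$ and $LJ$ as the candidate generating (trivial) cofibrations in $\N$. All remaining classes and model-theoretic data are then forced by the requirement that $(L,R)$ be a Quillen adjunction with $R$ detecting fibrations and weak equivalences.

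First I would check that the small object argument applies to $LI$ and $LJ$. Since $\M$ is strongly cofibrantly generated, the domains of $I$ and $J$ are small in $\M$. The hypothesis that $R$ preserves filtered colimits, together with the adjunction, transfers this smallness: for each such domain $X$ and every sufficiently filtered diagram $Y \colon \lambda \to \N$,
\[
\N\bigl(LX,\colim_i Y_i\bigr) \,\cong\, \M\bigl(X,R\colim_i Y_i\bigr) \,\cong\, \M\bigl(X,\colim_i RY_i\bigr) \,\cong\, \colim_i \M(X,RY_i) \,\cong\, \colim_i \N(LX,Y_i).
\]
So $LX$ is small in $\N$ and the small object argument produces two functorial factorizations: every map in $\N$ factors as an $LI$-cell complex followed by a map in $LI$-inj, and likewise for $LJ$. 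By adjunction and cofibrant generation of $\M$, one identifies $LJ$-inj with the class of fibrations in $\N$ and $LI$-inj with the class of trivial fibrations; in particular $LI$-inj $\subseteq LJ$-inj.

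The central technical step is the acyclicity condition that every $LJ$-cell complex is a weak equivalence in $\N$. Here I would run Quillen's path object argument. Given $g \colon A \to B$ in $LJ$-cell, the functorial fibrant replacement $\eta \colon B \to FB$ is a weak equivalence, so by two-out-of-three in $\M$ applied to $R\eta$ one may assume $B$ is fibrant. Then the path object $B \to PB \to B \times B$ is available in $\N$ and, by hypothesis, preserved by $R$ as a path object in $\M$. Because $g \in LJ$-cell has the left lifting property against every fibration in $\N$, including the trivial fibration $d_0 \colon PB \to B$, a lift $H \colon B \to PB$ with $Hg = sg$ and $d_0 H = 1_B$ exists. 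The second endpoint $d_1 H \colon B \to B$ then satisfies $(d_1 H)g = g$ and is right-homotopic to $1_B$ via $H$. Combining this homotopy, transported to $\M$ through the preservation hypothesis, with the auxiliary factorization of $g$ as an $LI$-cell complex $j \colon A \to A'$ followed by a trivial fibration $p \colon A' \to B$, and with the retract of $g$ onto $j$ supplied by the lift of $g$ against $p$, produces a homotopy inverse of $Rg$ in $\Ho(\M)$, so $Rg$ is a weak equivalence in $\M$.

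With smallness and acyclicity established, the remaining conditions of \cite{hovey} (2.1.19) are straightforward. Two-out-of-three and closure under retracts for weak equivalences in $\N$ are inherited from $\M$ via the functor $R$. The inclusion $LJ\text{-cell} \subseteq LI\text{-cof}$ follows because every $j \in J$ is a cofibration in $\M$, so $Lj$ has the left lifting property against $LI\text{-inj}$. The inclusion $LI\text{-inj} \subseteq W$ (where $W$ denotes the class of weak equivalences in $\N$) holds because $R$ sends $LI\text{-inj}$ to trivial fibrations in $\M$, which are weak equivalences. The remaining inclusion $W \cap LJ\text{-inj} \subseteq LI\text{-inj}$ reduces to the corresponding statement in $\M$ via $R$. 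This yields the cofibrantly generated model structure with generating classes $LI$ and $LJ$, and $(L,R)$ is a Quillen adjunction by construction since $R$ preserves (trivial) fibrations by the very definition of these classes in $\N$.

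The main obstacle is the acyclicity step. The naive retract produced by lifting $g$ against the trivial fibration $p$ only exhibits $g$ as a retract of the cofibration $j$, which is not enough to conclude that $g$ is a weak equivalence. The path object supplies the missing homotopical data, and the content of the argument is that, once this data is transported from $\N$ to $\M$ via the preservation hypothesis, it becomes sufficient to invert $Rg$ in $\Ho(\M)$. Every other step of the proof is a routine consequence of the adjunction together with the standard theory of cofibrantly generated model categories.
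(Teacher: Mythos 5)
The paper does not prove this lemma; it cites \cite{jy} (3.3), which in turn rests on the standard path-object transfer argument (Quillen, Schwede--Shipley, Berger--Moerdijk). Your overall strategy is the right one, and the smallness verification via the adjunction and the fact that $R$ preserves filtered colimits is correct, as are the identifications $LJ\text{-inj} =$ fibrations, $LI\text{-inj} =$ trivial fibrations, and the easy inclusions for \cite{hovey} (2.1.19).

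However, the acyclicity step has a genuine gap. First, the reduction ``one may assume $B$ is fibrant'' is unjustified: replacing $g$ by $\eta_B \circ g$ does preserve the weak-equivalence question by two-out-of-three, but $\eta_B \circ g$ need not lie in $LJ\text{-cell}$, so the left lifting property against fibrations --- which is the engine of the whole argument --- is lost. Second, even granting $B$ fibrant, your construction never produces a candidate map $RB \to RA$ in $\Ho(\M)$: the lift $H \colon B \to PB$ against $d_0$ stays inside $B$, and the retraction $r \colon B \to A'$ against $p$ lands in $A'$, not $A$, and only exhibits $g$ as a retract of the cofibration $j$ (not a trivial cofibration), which does not give $Rj$ a weak equivalence. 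The missing idea is to fibrantly replace the \emph{source}: lift $\eta_A$ along $g$ against the fibration $FA \to \ast$ to obtain $h \colon B \to FA$ with $hg = \eta_A$; then lift against the path-object fibration $P(FB) \to FB \times FB$ to produce a right homotopy between $Fg \circ h$ and $\eta_B$. After applying $R$ (which preserves the path object by hypothesis), both $Rh \circ Rg = R\eta_A$ and $R(Fg) \circ Rh \simeq R\eta_B$ become isomorphisms in $\Ho(\M)$, forcing $Rh$, and hence $Rg$, to be an isomorphism in $\Ho(\M)$. Your write-up never introduces $FA$ or the map $h$, which is exactly the piece that makes the homotopy-inverse claim go through.
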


\begin{definition}\label{def:convenient}
A \emph{convenient model category} is a combinatorial monoidal model category $\M$ that satisfies the following conditions:
\begin{enumerate}
\item The class of weak equivalences is closed under filtered colimits.
\item $\M$ satisfies the monoid axiom and the isofibration axiom (Def. \ref{def:cat.fib.axiom}), has a cofibrant monoidal unit, and is right proper.
\item $\M$ has a lax symmetric monoidal fibrant replacement functor (Def. \ref{def:fibrant.replacement}) and a cocommutative comonoidal interval (Def. \ref{def:comon.interval}).
\end{enumerate}
\end{definition}

\begin{example}\label{ex:convenient.examples}
Examples of convenient model categories include the categories of simplicial sets  \cite{quillen.homotopical}, of (non-negatively graded or unbounded) chain complexes of modules over a characteristic zero field $k$ \cite{hovey}, of simplicial $k$-modules \cite{quillen.homotopical}, and of small categories with the folk model structure \cite{jt91,rezk}.
\end{example}

\begin{definition}\label{def:opcoll.admissible}
Suppose $\M$ is a symmetric monoidal closed category with a given model category structure, and $(S,\sO,\alpha)$ is an augmented operadic collection in $\M$ (Def. \ref{def:aug.opcoll}).  We will say that $(S,\sO,\alpha)$ is \emph{admissible} if the category $\algso$ admits the model category structure with weak equivalences and fibrations as in Def. \ref{def:algso.weq.fib}.  This is called the \emph{Dwyer-Kan model category structure} on $\algso$.
\end{definition}

The following existence result is our first main result.

\begin{theorem}\label{algso.dkmodel}
Suppose $\M$ is a convenient model category and $(S,\sO, \alpha)$ is an augmented operadic collection in $\M$.  Then $(S,\sO,\alpha)$ is admissible.  Furthermore:
\begin{itemize}
\item The adjunction $\alpha_! : \algsass \adjoint \algso : \alpha^*$ in Prop. \ref{opcoll.map.leftadj} is a Quillen adjunction.
\item $\algso$ is cofibrantly generated with set of generating (trivial) cofibrations $\alpha_!(I^S)$ (resp., $\alpha_!(J^S)$), where $I^S$ and $J^S$ are as in \eqref{IsJs}.
\item A map in $\algso$ is a trivial fibration if and only if it is a local trivial fibration (Def. \ref{def:algass.model}) that is also surjective on colors.  
\item An object in $\algso$ is fibrant if and only if it is locally fibrant.
\end{itemize}
\end{theorem}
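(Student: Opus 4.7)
The plan is to apply the model category lifting Lemma \ref{model.lift} to the adjunction
\[\nicexy{\algsass \ar@<2pt>[r]^-{\alpha_!} & \algso \ar@<2pt>[l]^-{\alpha^\ast}}\]
from Prop. \ref{opcoll.map.leftadj}. The base category $\algsass$ carries the Dwyer-Kan model structure supplied by Theorem \ref{algass.dk.model}, which is strongly cofibrantly generated with the sets $I^S$ and $J^S$ of generating (trivial) cofibrations defined in \eqref{IsJs}. By Def. \ref{def:algso.weq.fib}, the weak equivalences and fibrations in $\algso$ are by construction detected by the right adjoint $\alpha^\ast$, and the fibrant objects are exactly those whose image under $\alpha^\ast$ is fibrant, so the detection hypothesis of Lemma \ref{model.lift} holds tautologically.

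Next I would verify the remaining hypotheses of Lemma \ref{model.lift}. Bicompleteness of $\algso$ is Prop. \ref{algso.bicomplete}, and the fact that $\alpha^\ast$ creates and preserves filtered colimits is Lemma \ref{algso.fil.colim}, which uses the filtered-colimit condition built into the definition of an augmented operadic collection (Def. \ref{def:aug.opcoll}). A functorial fibrant replacement in $\algso$ is constructed in Lemma \ref{algso.fib.replace} from the lax symmetric monoidal fibrant replacement functor in $\M$ that is part of the convenient model category hypothesis (Def. \ref{def:convenient}). Finally, Lemma \ref{algso.path} supplies, for every fibrant $(\fC,A) \in \algso$, a path object preserved by $\alpha^\ast$; it is obtained by applying the entrywise internal hom out of a cocommutative comonoidal interval in $\M$ and transferring algebra structure along the induced map of $\sofc$-colored operads $\osubc \to \osubc^H \to \osubc^{\tensorunit \amalg \tensorunit}$. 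This path-object verification is the most delicate of the inputs, since it is where the isofibration axiom on $\M$ enters in order to upgrade the entrywise fibration between fibrant objects into a Dwyer-Kan fibration after restriction along $\alpha^\ast$.

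With all five hypotheses verified, Lemma \ref{model.lift} immediately yields the Dwyer-Kan model category structure on $\algso$, exhibits $(\alpha_!, \alpha^\ast)$ as a Quillen adjunction, and identifies $\alpha_!(I^S)$ and $\alpha_!(J^S)$ as the sets of generating (trivial) cofibrations. The two explicit characterizations then follow from Remark \ref{rk.algso.fibrant}: since $\alpha^\ast$ is the identity on underlying colored objects, a map in $\algso$ is a trivial fibration precisely when its $\alpha^\ast$-image is a trivial fibration in $\algsass$, which by Theorem \ref{algass.dk.model} means a local trivial fibration that is surjective on colors; similarly, fibrancy of $(\fC,A)$ is equivalent to local fibrancy of its underlying object in $\mtosofc$.

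I expect no genuine new obstacle beyond what has already been packaged into the intermediate lemmas. The present theorem is the assembly step in which the path-object construction, the fibrant replacement, and the filtered-colimit condition are combined with the lifting lemma; the only point requiring care is that $\alpha^\ast$, which leaves underlying colored objects and color maps unchanged, faithfully transmits each structural property (filtered colimits, fibrant replacements, path objects, fibrancy, trivial fibrations) between $\algso$ and $\algsass$, so that the model structure on $\algsass$ established in Theorem \ref{algass.dk.model} determines the model structure on $\algso$ without ambiguity.
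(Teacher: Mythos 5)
Your proposal is correct and follows essentially the same route as the paper: invoke Theorem \ref{algass.dk.model} for the Dwyer-Kan structure on $\algsass$, then apply the lifting Lemma \ref{model.lift} to the adjunction $(\alpha_!,\alpha^\ast)$ from Prop.\ \ref{opcoll.map.leftadj}, verifying its hypotheses via Prop.\ \ref{algso.bicomplete}, Lemma \ref{algso.fil.colim}, Lemma \ref{algso.fib.replace}, and Lemma \ref{algso.path}, and reading off the trivial fibrations and fibrant objects from $\algsass$. The only point worth noting is that the paper does not cite Remark \ref{rk.algso.fibrant} in the proof (it simply says the descriptions hold because they hold in $\algsass$), but your appeal to that remark is just a more explicit version of the same observation.
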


\begin{proof}
Since $\M$ is a convenient model category, by Theorem \ref{algass.dk.model} the category $\algsass$ of $\M$-enriched categories with $S$-entries admits the Dwyer-Kan model category structure.  Furthermore, it is strongly cofibrantly generated with set of  generating (trivial) cofibrations $I^S$ (resp., $J^S$).  Define weak equivalences and fibrations in $\algso$ as in Def. \ref{def:algso.weq.fib}.

We will apply the model category lifting Lemma \ref{model.lift} to the adjunction 
\[\alpha_! : \algsass \adjoint \algso : \alpha^*\]
in Prop. \ref{opcoll.map.leftadj}.  The algebra category $\algso$ has all small limits and colimits by Prop. \ref{algso.bicomplete}.  The right adjoint $\alphastar$ creates and preserves filtered colimits by Lemma \ref{algso.fil.colim}.  There is a fibrant replacement functor in $\algso$ by Lemma \ref{algso.fib.replace}.  Finally, every fibrant object in $\algso$ has a path object that is preserved by $\alphastar$ by Lemma \ref{algso.path}.  Therefore, Lemma \ref{model.lift} applies to the adjunction $(\alpha_!,\alphastar)$.  The descriptions of trivial fibrations and fibrant objects are true because they are true in $\algsass$.
\end{proof}

\begin{example}[Dwyer-Kan Model Categories of Operad-Like Structure]
\label{ex:gprop.dwyer.kan}
Suppose $\M$ is a convenient model category, such as one of those in Example \ref{ex:convenient.examples}, and $(S,\sO, \alpha)$ is any one of the augmented operadic collections in $\M$ in the diagram in Example \ref{ex:maps.op.coll}, except for (unital linear) and $(S,\sI)$.  Then Theorem \ref{algso.dkmodel} implies that the category $\algso$, which is one of the categories in the diagram in Example \ref{ex:algebras.op.coll} except for (enriched categories) and $\mtosdash$, admits the Dwyer-Kan model category structure.  For example, taking $(S,\sO,\alpha)$ to be the augmented operadic collection of all wheeled graphs (resp., connected wheeled graphs, or wheeled trees), we have that the category of all small wheeled props (resp., wheeled properads, or wheeled operads) in $\M$ admits the Dwyer-Kan model category structure.

Using different methods, when $\M$ is the category of simplicial sets and $(S,\sO)$ is the operadic collection of unital trees (resp., wheel-free graphs, connected wheel-free graphs, or simply-connected graphs), whose algebras are colored operads (resp., props, properads, or dioperads), this Dwyer-Kan model category structure was first obtained in \cite{cm13,robertson} (resp., \cite{hr16} and \cite{hry.properad}).  For more general monoidal model categories $\M$, this Dwyer-Kan model category structure was obtained in \cite{cav14} for colored operads and in \cite{cav15} for props.  Our Dwyer-Kan model category structure in the wheeled cases (i.e., wheeled props, wheeled properads, and wheeled operads) are new.
\end{example}

\begin{remark}\label{rk:integral.model}
When $\M$ is a convenient model category, every colored operad in $\M$ is admissible (Def. \ref{operad.alg.model}); see Example \ref{ex:bm-admissible}.  Recall that the category $\algso$ coincides with the Grothendieck construction $\int \Obar$ \eqref{op.coll.obar}.  Equip the category of sets with the trivial model category structure, in which a weak equivalence is a bijection and every map is both a fibration and a cofibration.  Then \cite{hp} (3.0.12) says that the category $\int \Obar = \algso$ admits a model category structure, called the \emph{integral model structure}, in which a map $f$ is a weak equivalence if and only if it is a bijection on color sets and it is entrywise a weak equivalence.  So there are fewer weak equivalences in the integral model structure than in our Dwyer-Kan model category structure on $\algso$.  A fibration in the integral model structure is an entrywise fibration, so there are more fibrations in the integral model structure than there are in the Dwyer-Kan model category structure.  In fact, using any one of the nine model category structures on the category of sets \cite{camarena}, the associated integral model structure is still different from our Dwyer-Kan model category structure.
\end{remark}

\subsection{Quillen Adjunctions}
Recall from Def. \ref{def:quillen.adjunction} the concept of a Quillen adjunction.

\begin{corollary}\label{dk.quillen}
Suppose $\M$ is a convenient model category, $(S^1,\sO^1, \alpha^1)$ and $(S^2,\sO^2, \alpha^2)$ are augmented operadic collections in $\M$, and $\alpha : (S^1,\sO^1) \to (S^2,\sO^2)$ is a map of operadic collections such that the diagram of operadic collections
\[\nicexy{(S^1,\sO^1) \ar[r]^-{\alpha} & (S^2,\sO^2)\\
(S^1,\Assone) \ar[u]^-{\alpha^1} \ar[r]^-{\iota} & (S^2,\Asstwo) \ar[u]_-{\alpha^2}}\]
is commutative, where $\iota$ is the map in \eqref{ass.onetwo}.  Then the adjunction 
\[\nicexy{\algsoone \ar@<2pt>[r]^-{\alpha_!} & \algsotwo \ar@<2pt>[l]^-{\alpha^\ast}}\]
in Prop. \ref{opcoll.map.leftadj} is a Quillen adjunction, where each category $\alg(S^i,\sO^i)$ is equipped with the Dwyer-Kan model category structure in Theorem \ref{algso.dkmodel}.
\end{corollary}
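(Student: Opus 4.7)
The plan is to verify directly that the right adjoint $\alphastar$ preserves fibrations and trivial fibrations, using the explicit characterizations supplied by Theorem \ref{algso.dkmodel} and Def. \ref{def:algass.model}. Recall from Def. \ref{def:op.coll.map.rightadj} that at each color set $\fC$ the functor $\alphac^{\ast}$ restricts an $\osubctwo$-algebra along $\alphac : \osubcone \to \osubctwo \in \operadm$; since the underlying map of operad-colors is the inclusion $S^1(\fC) \hookrightarrow S^2(\fC)$, the functor $\alphastar$ is the identity on the color set $\fC$ of an algebra and, on entries, simply forgets those indexed by elements of $S^2(\fC) \setminus S^1(\fC)$.

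For trivial fibrations the verification is immediate. By Theorem \ref{algso.dkmodel}, a trivial fibration in $\alg(S^i,\sO^i)$ is exactly a local trivial fibration that is surjective on colors. Given such an $f \in \algsotwo$, the map $\alphastar f$ has the same color-set component as $f$ (hence remains surjective on colors), and its entries form a subfamily of those of $f$ (hence remain local trivial fibrations), so $\alphastar f$ is a trivial fibration in $\algsoone$.

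For general fibrations I exploit the commutative square in the hypothesis. The identity $\alpha \circ \alphaone = \alphatwo \circ \iota$ of maps of operadic collections $(S^1,\Assone) \to (S^2,\sO^2)$ yields, by passing to restriction functors, the commutative square
\[\nicexy{\algsotwo \ar[r]^-{\alphastar} \ar[d]_-{(\alphatwo)^{\ast}} & \algsoone \ar[d]^-{(\alphaone)^{\ast}}\\
\alg(S^2,\Asstwo) \ar[r]^-{\iota^{\ast}} & \alg(S^1,\Assone)}\]
of right adjoints, i.e., $(\alphaone)^{\ast} \circ \alphastar = \iota^{\ast} \circ (\alphatwo)^{\ast}$. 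By Def. \ref{def:algso.weq.fib}, $\alphastar f$ is a fibration in $\algsoone$ if and only if $(\alphaone)^{\ast}(\alphastar f) = \iota^{\ast}(\alphatwo)^{\ast} f$ is a fibration in $\alg(S^1,\Assone)$, and $(\alphatwo)^{\ast} f$ is already a fibration in $\alg(S^2,\Asstwo)$ whenever $f$ is a fibration in $\algsotwo$. So the task reduces to showing that $\iota^{\ast}$ preserves fibrations. By Def. \ref{def:algass.model}(4), a fibration in $\alg(S,\As^S)$ is a local fibration whose categorical part is a fibration in $\Catm$; both conditions are preserved by $\iota^{\ast}$, which discards only the non-categorical entries indexed by $S^2 \setminus S^1$ and leaves the categorical part (determined by the $(1,1)$-entries) intact. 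No serious obstacle is involved: the trivial-fibration case follows directly from Theorem \ref{algso.dkmodel}, and the fibration case is a one-step reduction via the commutative square to the transparent behavior of $\iota^{\ast}$.
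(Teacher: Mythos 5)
Your proof is correct and follows essentially the same route as the paper: exploit the commutative square of restriction functors induced by the hypothesis, together with the fact that weak equivalences and fibrations in $\alg(S^i,\sO^i)$ are created by $(\alpha^i)^{\ast}$. The only cosmetic difference is that you verify preservation of (fibrations $+$ trivial fibrations) — matching Def.~\ref{def:quillen.adjunction} directly, with the trivial-fibration case read off the characterization in Theorem~\ref{algso.dkmodel} — whereas the paper verifies preservation of (fibrations $+$ weak equivalences) and then deduces trivial fibrations as the intersection; both are equivalent and rest on the same observation that $\iota^{\ast}$, which only discards non-categorical entries, visibly preserves the relevant classes.
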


\begin{proof}
The diagram
\[\nicexy{\algsoone \ar[d]_-{\alpha^{1\ast}} & \algsotwo \ar[l]_-{\alphastar} \ar[d]^-{\alpha^{2\ast}}\\
\algsassone & \algsasstwo \ar[l]_-{\iota^\ast}}\]
of restriction functors is commutative.  By the definition of the Dwyer-Kan model category structure (Def. \ref{def:algso.weq.fib}), the right adjoint $\alphastar$ preserves both fibrations and weak equivalences, hence also trivial fibrations.
\end{proof}

\begin{example}
Suppose $\M$ is a convenient model category.  Corollary \ref{dk.quillen} implies that in the following diagram extracted from Example \ref{ex:algebras.op.coll}, 
\begin{center}
\begin{small}
\begin{tikzpicture}
\node (whgr) {(wheeled props)};
\node[below=of whgr] (whfree) {(props)};
\node[left=of whfree] (cwhgr) {(wheeled properads)};
\node[below=of whfree] (cwhfree) {(properads)};
\node[below=of cwhfree] (simply) {(dioperads)};
\node[left=of simply] (whtree) {(wheeled operads)};
\node[below right=of cwhfree] (graftcor) {(vprops)};
\node[below=of whtree] (utree) {(colored operads)};
\node[right=of utree] (ass) {$\Bigl($\parbox{3cm}{enriched categories with $S$-entries}$\Bigr)$};
\draw[->] (cwhgr)--(whgr.south west); \draw[->] (whfree)--(whgr); 
\draw[->] (cwhfree)--(cwhgr);
\draw[->] (cwhfree)--(whfree); \draw[->] (whtree)--(cwhgr);
\draw[->] (graftcor)--(cwhfree); \draw[->] (ass)--(graftcor);
\draw[->] (simply)--(cwhfree); \draw[->] (utree)--(simply); 
\draw[->] (utree)--(whtree);
\draw[->] (ass)--(simply); \draw[dashed,->] (ass)--(utree); 
\draw[dashed,->] (ass)--(whtree.south east);
\end{tikzpicture}
\end{small}
\end{center}
each arrow is a left Quillen functor between Dwyer-Kan model categories of algebras over some operadic collections in $\M$.  The dashed maps were explained in Example \ref{ex:maps.op.coll}.
\end{example}

\section{Lifting Quillen Equivalences to Algebra Categories}
\label{sec:lifting.quillen}

In this section we show that in the context of Theorem \ref{algso.dkmodel}, the Dwyer-Kan model category structure on the algebra category $\algso$ is  homotopy invariant with respect to a simultaneous change of the underlying category and the operadic collection.  The underlying category changes via a suitable Quillen equivalence, and the operadic collection changes via a map of operadic collections that is entrywise a weak equivalence.  To achieve this kind of results, some extra cofibrancy conditions are needed, which can be imposed upon either the Quillen equivalence or the operadic collections.  Therefore, as we will explain below, there are two versions of the main result in this section.

\subsection{Lifting Quillen Adjunctions}
We first consider lifting a Quillen adjunction between the underlying categories to the categories of algebras over operadic collections.

\begin{assumption}\label{lifting.qadj.setting}
Suppose: 
\begin{enumerate}
\item $L : \M \adjoint \N : R$ is an adjunction between bicomplete symmetric monoidal closed categories with $R$ lax symmetric monoidal.
\item $(S,\sO,\alphaone)$ is an augmented operadic collection in $\M$ (Def. \ref{def:aug.opcoll}), and $(S,\sP,\alphatwo)$ is an augmented operadic collection in $\N$ with the same $S$.
\item $\alpha : (S,\sO) \to (S,R\sP)$ is a map of operadic collections in $\M$ that is compatible with the augmentations in the sense that the diagram of operadic collections in $\M$
\begin{equation}\label{augmentation.compatible}
\nicexy{(S,\sO) \ar[r]^-{\alpha} & (S,R\sP)\\
(S,\Ass) \ar[u]^-{\alphaone} \ar[r]^-{\epsilon} & (S, R\Ass) \ar[u]_-{R\alphatwo}}
\end{equation}
is commutative.  Here, for each set $\fC$,
\[(R\sP)_{\fC} = R(\sP_{\fC})\]
is the $\sofc$-colored operad obtained by applying $R$ entrywise to $\psubc$, as in the proof of Lemma \ref{algso.fib.replace}, and similarly for $R\Ass$. The map $\epsilonc$ is entrywise either the unique map $\varnothing \to R\varnothing$ or is isomorphic to the adjoint of a finite coproduct of the structure map $L\tensorunitm \to \tensorunitn$.
\end{enumerate}
\end{assumption}

\begin{definition}\label{lbar.exists}
Under Assumption \ref{lifting.qadj.setting}, consider the solid-arrow diagram of functors
\begin{equation}\label{lbar.algso.diagram}
\nicexy@C+1cm{\algso \ar@{.>}@<2.5pt>[r]^-{\Lbar} \ar@<2.5pt>[d]^-{U}
& \algsp \ar@<2.5pt>[l]^-{\Ralpha} \ar@<2.5pt>[d]^-{U} \\
\mtosdash \ar@<2.5pt>[r]^-{L} \ar@<2.5pt>[u]^-{\sO_{(-)} \comp -}  
& \ntosdash \ar@<2.5pt>[l]^-{R} \ar@<2.5pt>[u]^-{\sP_{(-)} \comp -}}
\end{equation}
with $\mtosdash$ and $\ntosdash$ as in Example \ref{ex:initial.collection}.  The vertical free-forgetful adjunctions (Prop. \ref{opcoll.map.leftadj}) are induced by the maps \eqref{opcoll.from.initial}.  The original adjunction $(L,R)$ is prolonged entrywise to the bottom horizontal adjunction between $\mtosdash$ and $\ntosdash$.  Define the functor $\Ralpha$ by sending $(\fC,B) \in \algsp$ (so $B$ is a $\psubc$-algebra) to
\begin{equation}\label{Ralpha}
\Ralpha(\fC,B) = (\fC,RB) \in \algso
\end{equation}
in which:
\begin{enumerate}
\item The underlying object of $RB$ is the result of applying $R$ entrywise to $B$.  
\item The $\osubc$-algebra structure maps on $RB$ are obtained by pulling back the structure maps of $RB \in \alg(R\psubc)$ along the map $\alphac : \osubc \to R\psubc$ of $\sofc$-colored operads. 
\end{enumerate}
\end{definition}

\begin{lemma}\label{ralpha.left.adj}
Under Assumption \ref{lifting.qadj.setting} the functor $\Ralpha$ in \eqref{lbar.algso.diagram} admits a left adjoint $\Lbar$ such that there is a natural isomorphism
\[\nicexy{\Lbar \bigl(\sO_{(-)} \comp -\bigr) \cong \bigl(\sP_{(-)} \comp -\bigr) L.}\]
\end{lemma}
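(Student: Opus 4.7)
The plan is to follow the Adjoint Lifting Theorem argument used in the proof of Proposition \ref{opcoll.map.leftadj}, applied to the square \eqref{lbar.algso.diagram} of right adjoints. The first step is to verify that this square commutes on the nose, i.e., $U\Ralpha = RU$. This holds by inspection of \eqref{Ralpha}: both composites send $(\fC,B) \in \algsp$ to the pair $(\fC,RB) \in \mtosdash$, whose second entry is obtained by applying $R$ entrywise, because restricting an $R\psubc$-algebra structure along $\alphac : \osubc \to R\psubc$ does not alter the underlying object in $\M^{S(\fC)}$.

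Given this strict commutation, I would invoke the Adjoint Lifting Theorem \cite{borceux} (4.5.6), for which the following three hypotheses must be checked. First, $\algso$ is cocomplete by Proposition \ref{algso.bicomplete}. Second, the right adjoint $U : \algsp \to \ntosdash$ is monadic; this is exactly the monadicity statement recalled in the proof of Proposition \ref{opcoll.map.leftadj}, since $U$ is assembled from the monadic forgetful functors $\alg(\psubc) \to \N^{S(\fC)}$ indexed by color sets $\fC \in \set$. Third, the bottom functor $R : \ntosdash \to \mtosdash$, which is the entrywise prolongation of $R$, has the entrywise prolongation of $L$ as a left adjoint. The Adjoint Lifting Theorem then yields the desired left adjoint $\Lbar$ to $\Ralpha$.

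For the natural isomorphism $\Lbar(\sO_{(-)} \comp -) \cong (\sP_{(-)} \comp -) L$, I would argue by uniqueness of left adjoints. The identity $U\Ralpha = RU$ of right adjoints $\algsp \to \mtosdash$ forces their left adjoints to be naturally isomorphic. But the left adjoint of $U\Ralpha$ is $\Lbar \circ (\sO_{(-)} \comp -)$, and the left adjoint of $RU$ is $(\sP_{(-)} \comp -) \circ L$, which gives the claimed isomorphism.

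The only nontrivial input is the monadicity of $U : \algsp \to \ntosdash$, but this is already established by the argument underlying Proposition \ref{opcoll.map.leftadj}; the compatibility of $\alpha$ with the augmentations \eqref{augmentation.compatible} plays no role at this stage and will only enter later when comparing the Dwyer-Kan model structures. Thus the lemma reduces essentially to checking that the right adjoint square commutes strictly, which is immediate from the construction of $\Ralpha$.
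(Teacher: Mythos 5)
Your approach is the same as the paper's: verify strict commutativity $U\Ralpha = RU$, invoke the Adjoint Lifting Theorem (Borceux 4.5.6), and deduce the displayed natural isomorphism by uniqueness of left adjoints. However, you have misidentified one of the hypotheses: in the form of the Adjoint Lifting Theorem used here, it is the \emph{domain} of the right adjoint $\Ralpha$, namely $\algsp$, that must be cocomplete (or at least have reflexive coequalizers), not $\algso$, and that is what the paper cites Proposition \ref{algso.bicomplete} for. Since that proposition applies equally to any operadic collection, the slip is harmless, but you are verifying the wrong category's cocompleteness as stated. The paper also records that \emph{both} vertical right adjoints $U$ (i.e., $\algso \to \mtosdash$ as well as $\algsp \to \ntosdash$) are monadic, which is what Borceux's statement requires; you only mention one. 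Again both monadicities hold for the same reason, so the proof goes through, but the hypotheses as you list them do not literally match the theorem you are applying.
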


\begin{proof}
In the diagram \eqref{lbar.algso.diagram}, there is an equality
\[U \Ralpha = RU.\]  Moreover, both vertical right adjoints $U$ are monadic in the sense of \cite{borceux} (4.4.1), and the category $\algsp$ is cocomplete by Prop. \ref{algso.bicomplete}.  So the left adjoint $\Lbar$ of $\Ralpha$ exists by the Adjoint Lifting Theorem \cite{borceux} (4.5.6).  The commutativity of the left adjoint diagram follows from that of the right adjoint diagram and the uniqueness of left adjoints.
\end{proof}

\begin{lemma}\label{lifting.qadj}
Under Assumption \ref{lifting.qadj.setting} suppose further that $L : \M \adjoint \N : R$ is a mild monoidal Quillen adjunction (Def. \ref{def:quillen.adjunction}) between convenient model categories (Def. \ref{def:convenient}).
Then the adjunction
\begin{equation}\label{lbar.ralpha.qadj}
\nicexy{\algso \ar@<2.5pt>[r]^-{\Lbar} & \algsp \ar@<2.5pt>[l]^-{\Ralpha}}
\end{equation}
in Lemma \ref{ralpha.left.adj} is a Quillen adjunction, in which both $\algso$ and $\algsp$ are equipped with the Dwyer-Kan model category structure in Theorem \ref{algso.dkmodel}.
\end{lemma}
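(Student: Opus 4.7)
The plan is to show that the right adjoint $\Ralpha \colon \algsp \to \algso$ preserves both trivial fibrations and fibrations. Trivial fibrations are immediate: by Theorem \ref{algso.dkmodel} they are exactly the local trivial fibrations that are surjective on colors. Since $R$ is right Quillen, the entrywise trivial fibration property is preserved, and $\Ralpha$ is the identity on color sets by construction (Definition \ref{lbar.exists}).

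For fibrations I would first reduce to a statement about enriched categories. The compatibility diagram \eqref{augmentation.compatible}, after taking induced pullback functors on categories of algebras, yields a natural isomorphism
\[\alphaone^{\ast} \circ \Ralpha \;\cong\; \Rbar \circ \alphatwo^{\ast},\]
where $\Rbar \colon \alg(S,\Ass)_{\N} \to \algsass$ applies $R$ entrywise and then reinterprets the resulting $R(\Ass_{\N})_{\fC}$-algebra structure through $\epsilon_{\fC}$. On the categorical part $\Rbar$ restricts to the functor $R_{\ast} \colon \Catn \to \Catm$ induced by the lax monoidal structure of $R$. By Definitions \ref{def:algso.weq.fib} and \ref{def:algass.model}, to show that $\Ralpha$ preserves fibrations it suffices to check the same for $\Rbar$. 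The entrywise fibration condition is preserved because $R$ is right Quillen, so the task boils down to proving that $R_{\ast}$ carries fibrations of $\Catn$ to fibrations of $\Catm$.

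For this I would recycle the computation in the proof of Proposition \ref{isofibration.axiom.extend}: the cofibrancy of $\tensorunitm$ in a convenient model category, combined with the mild monoidal weak equivalence $L\tensorunitm \to \tensorunitn$ and the derived adjunction, produces for every pair of objects $x, y$ in a locally fibrant $\N$-enriched category $X$ a natural bijection
\[\pizerom\bigl(R_{\ast} X\bigr)(x,y) \;\cong\; \pizeron(X)(x,y)\]
compatible with composition. These bijections assemble into a natural isomorphism $\pizerom(R_{\ast} X) \cong \pizeron(X)$ of ordinary categories, from which it follows that $R_{\ast}$ sends fibrations between locally fibrant objects to homotopical isofibrations in $\Catm$.

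The main obstacle is the general case in which the source or target of the fibration is not locally fibrant, since $R$ need not preserve weak equivalences outside fibrant objects. I plan to resolve it by invoking the lax symmetric monoidal fibrant replacement supplied by the convenient model structure on $\N$ (Definition \ref{def:convenient}): applied entrywise, it provides a natural local fibrant replacement inside $\Catn$ that commutes with $R_{\ast}$ up to a levelwise weak equivalence in $\Catm$ via Ken Brown's lemma on the induced maps between fibrant objects. This reduces the general homotopical isofibration statement to the locally fibrant case already handled, and so $\Ralpha$ preserves all fibrations; hence $(\Lbar, \Ralpha)$ is a Quillen adjunction.
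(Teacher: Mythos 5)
Your proof correctly handles the trivial fibrations and reproduces the key $\pi_0$ computation for locally fibrant objects, which is essentially identical to the paper's calculation \eqref{pizero.rb}. However, the final paragraph, where you try to treat fibrations with non-fibrant source or target by means of the lax symmetric monoidal fibrant replacement and Ken Brown's lemma, has a genuine gap. Ken Brown's lemma tells us that a right Quillen functor $R$ sends weak equivalences \emph{between fibrant objects} to weak equivalences; but the fibrant replacement map $\eta_X\colon X \to R'X$ (applied entrywise in $\Catn$) has a non-fibrant source precisely in the case you are trying to handle, so $R\eta_X$ need not be a weak equivalence in $\M$. Consequently you cannot conclude that $\pizerom(R_{\ast}X)$ agrees with $\pizerom(R_{\ast}R'X)$, and the reduction to the locally fibrant case fails.

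The paper avoids the non-fibrant case entirely by invoking the standard criterion (Dugger \cite{dugger} (A.2), equivalently Hirschhorn \cite{hirschhorn} 8.5.4): a functor between model categories that preserves \emph{all} trivial fibrations and preserves fibrations \emph{between fibrant objects} is already a right Quillen functor. You have already established both of those hypotheses — preservation of all trivial fibrations (your first paragraph) and the $\pi_0$ identification for locally fibrant objects (your second and third paragraphs). So the fix is simply to cite that criterion and delete your last paragraph; the fibrant-replacement argument is both unnecessary and unsalvageable as stated.
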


\begin{proof}
According to \cite{dugger} (A.2) (or equivalently \cite{hirschhorn} 8.5.4), to show that $\Ralpha$ is a right Quillen functor, it is enough to show that it preserves fibrations between fibrant objects and all trivial fibrations.  In both $\algso$ and $\algsp$ a trivial fibration is a local trivial fibration that is surjective on colors.  The functor $\Ralpha$ does not change what a map does on colors.  On the underlying entries, $\Ralpha$ is simply $R$ entrywise, which preserves (trivial) fibrations because $R$ is a right Quillen functor.  So $\Ralpha$ preserves trivial fibrations, local fibrations, and fibrant objects.  It remains to show that $\Ralpha$ preserves fibrations between fibrant objects.

To this end, recall the functor $(-)^{\cat}$ that restricts to the categorical part \eqref{enriched.cat.adjoint}.  Suppose $(\fD,B) \in \algsp$ is fibrant, so $B$ is an entrywise fibrant $\psubd$-algebra.  To simplify the notation, we will write $\Bcat \in \Catn$ for $(\alphad^{2*} B)^{\cat}$ and similarly for $(S,\sO)$-algebras.  This abbreviation should not cause any confusion because the local restriction functor $\alphad^{2*}$ \eqref{restriction.c} does not change the underlying objects.  Since $R$ is lax monoidal, applying it entrywise to $\Bcat$ yields $R\Bcat \in \Catm$.  The augmentation compatibility diagram \eqref{augmentation.compatible} implies that there is an equality
\[(\Ralpha B)^{\cat} = R\Bcat \in \Catm.\]
Recall the functor $\pizero$ in \eqref{functor.pizero}.  Using the fact that $B$ is entrywise fibrant, for any pair of objects $x, y \in \fD$, we have bijections:
\begin{equation}\label{pizero.rb}
\begin{split}
\pizerom\bigl((\Ralpha B)^{\cat}\bigr)(x,y) 
&= \pizerom\bigl(R\Bcat\bigr)(x,y)\\
&= \Ho(\M)\bigl(\tensorunitm, R\Bcat(x,y)\bigr)\\
&\cong \Ho(\N)\bigl(LQ\tensorunitm, \Bcat(x,y)\bigr)\\
&\cong \Ho(\N)\bigl(\tensorunitn, \Bcat(x,y)\bigr)\\
&= \pizeron(\Bcat)(x,y).
\end{split}
\end{equation}
For the two isomorphisms, we used the fact that $(L,R)$ is a Quillen adjunction and the fact that $LQ\tensorunitm \to \tensorunitn$ is a weak equivalence in $\N$, respectively.

Suppose $f \in \algsp$ is a fibration between fibrant objects.  Since $f$ is, in particular, a local fibration between fibrant objects, we already observed above that $\Ralpha f$ is a local fibration between fibrant objects.  It remains to show that $(\Ralpha f)^{\cat}$ is a homotopical isofibration.  By assumption $\pizeron(\fcat) \in \Cat$ is an isofibration (Def. \ref{def:pizero}).  The calculation \eqref{pizero.rb} implies that $\pizerom\bigl((\Ralpha f)^{\cat}\bigr) \in \Cat$ is also an isofibration; i.e., $(\Ralpha f)^{\cat} \in \Catm$ is a homotopical isofibration.
\end{proof}

\begin{example}
Lemma \ref{lifting.qadj} applies to all the adjunctions in Example \ref{ex:isofibration.adjunction} as long as $k$ is a field of characteristic zero.
\end{example}

\subsection{Cofibrant Algebras}
In the proof of the main result of this section, we will need to know that a cofibrant algebra $(\fC,A)$ over an operadic collection $(S,\sO)$ is also cofibrant as an $\osubc$-algebra.  To make sense of this, we first need to define the corresponding model category structure on $\osubc$-algebras.

\begin{definition}\label{operad.alg.model}
Suppose $\M$ is a symmetric monoidal closed category with a given model category structure, and $\sO$ is a $\fC$-colored operad in $\M$ for some set $\fC$ (Def. \ref{def:colored-operad}).  Following \cite{bm03} we will say that $\sO$ is \emph{admissible} if the category $\algo$ of $\sO$-algebras (Def. \ref{colored-operad-algebra}) admits the model category structure in which a map is a weak equivalence (resp., fibration) if and only if its underlying map is entrywise a weak equivalence (resp., fibration) in $\M$.  This is called the \emph{projective model category structure} on $\algo$.
\end{definition}

\begin{example}\label{ex:bm-admissible}
If $\M$ is a convenient model category  (Def. \ref{def:convenient}), then every colored operad in $\M$ is admissible by \cite{bm07} (2.1).
\end{example}

\begin{lemma}\label{cofibrant.algebra}
Suppose $\M$ is a convenient model category, $(S,\sO,\alpha)$ is an augmented operadic collection in $\M$ (Def.  \ref{def:aug.opcoll}), and $(\fC,A) \in \algso$ is cofibrant in the Dwyer-Kan model category structure (Def. \ref{def:opcoll.admissible}).  Then $A \in \algosubc$ is cofibrant in the projective model category structure.
\end{lemma}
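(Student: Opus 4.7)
My plan is to realize $A$ as a retract in $\algosubc$ of a manifestly projective-cofibrant $\osubc$-algebra; the result then follows since retracts of cofibrant objects are cofibrant in any model category.

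First I would apply the small object argument in $\algosubc$. Since $\M$ is a convenient model category, Example \ref{ex:bm-admissible} guarantees that the $\sofc$-colored operad $\osubc$ is admissible, so $\algosubc$ carries the projective model structure of Definition \ref{operad.alg.model}. Factoring the unique map from the initial object of $\algosubc$ to $A$ as a (generating) cofibration followed by a trivial fibration produces
\[\nicexy{\varnothing \ar[r] & A' \ar[r]^-{p} & A}\]
in $\algosubc$, with $A'$ cofibrant in the projective model structure and $p$ entrywise a trivial fibration in $\M$.

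Next I would transfer this diagram to $\algso$. The map $p$ defines a morphism $(\fC,p) : (\fC,A') \to (\fC,A)$ in $\algso$ that is the identity on the color set $\fC$. By the characterization of trivial fibrations in Theorem \ref{algso.dkmodel}, $(\fC,p)$ is a trivial fibration in the Dwyer-Kan model structure, since it is surjective on colors and is a local trivial fibration. The hypothesis that $(\fC,A)$ is cofibrant in $\algso$ then supplies, via the left-lifting property against $(\fC,p)$, a section $\sigma : (\fC,A) \to (\fC,A')$ of $(\fC,p)$.

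The crux of the argument is to verify that $\sigma$ is the identity on colors, so that it descends to a morphism of $\osubc$-algebras. This will be immediate from the equation $(\fC,p) \circ \sigma = \id_{(\fC,A)}$: its color component reads $\id_{\fC} \circ \sigma_0 = \id_{\fC}$, forcing $\sigma_0 = \id_{\fC}$. Consequently $\sigma$ restricts to a morphism $A \to A'$ in $\algosubc$ splitting $p$, which exhibits $A$ as a retract of the projectively cofibrant $\osubc$-algebra $A'$. Hence $A$ is itself cofibrant in the projective model structure on $\algosubc$, as required.
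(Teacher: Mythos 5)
Your argument is correct and rests on exactly the same key observation as the paper's proof: a map in $\algosubc$ that is the identity on the fixed color set $\fC$ and is an entrywise trivial fibration becomes a trivial fibration in $\algso$ under the Dwyer-Kan characterization of Theorem \ref{algso.dkmodel}, and the cofibrancy of $(\fC,A)$ in $\algso$ then yields a lift that is necessarily the identity on colors. The only difference is cosmetic: the paper verifies the left lifting property of $\varnothing \to A$ in $\algosubc$ directly against an arbitrary trivial fibration (transporting the whole lifting square into $\algso$), whereas you fix a single cofibrant replacement $p : A' \to A$ and conclude by the retract argument. Both are valid routes to the same conclusion; the paper's version saves a factorization but the logic is otherwise identical.
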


\begin{proof}
Recall that the augmented operadic collection $(S,\sO,\alpha)$ is admissible by Theorem \ref{algso.dkmodel} and that $\osubc$ is an $\sofc$-colored operad in $\M$ (Def. \ref{def:operadic.collection}).  Suppose given a solid-arrow diagram
\begin{equation}\label{algso.cof.lifting}
\nicexy{\varnothing \ar[d] \ar[r] & E \ar[d]^-{p}\\ A \ar@{.>}[ur]^-{f} \ar[r]^-{h} & B}
\end{equation}
in $\algosubc$ in which $p$ is a trivial fibration, i.e., an entrywise trivial fibration.  We must show that there exists a dotted arrow $f$ that makes the lower triangle commutative.  We first equip both $E$ and $B$ with the same color set $\fC$ with $h$ and $p$ the identity map on colors, so now $h,p \in \algso$.  

We next consider the diagram \eqref{algso.cof.lifting} as a diagram in $\algso$ in which $\varnothing$ is now the initial object in $\algso$.  Since the map $p$ is the identity map on colors and is a local trivial fibration, it is a trivial fibration in $\algso$.  By the cofibrancy assumption on $(\fC,A) \in \algso$, there exists a dotted arrow $f \in \algso$ such that $pf = h$.  Since $h$ and $p$ are both the identity map on colors, so is $f$.  Thus, $f$ gives a map $A \to E \in \algosubc$ and is the desired lift of $h$.
\end{proof}

\subsection{Nice Quillen Equivalences}
To lift a Quillen equivalence to the categories of algebras over operadic collections, we will need some extra cofibrancy conditions on the Quillen equivalence, which we now define.

\begin{definition}\label{def:sharp}
Suppose  $L : \M \adjoint \N : R$ is an adjunction between monoidal categories with $R$ a lax monoidal functor (Def. \ref{def:monoidal.functor}).  
\begin{enumerate}
\item For objects $X$ and $Y$ in $\M$, the map
\begin{equation}\label{comonoidal.map}
\nicexy{L\left(X \otimes Y\right) \ar[r]^-{L^2_{X,Y}} & LX \otimes LY \in \N,}
\end{equation}
defined as the adjoint of the composite
\[\nicexy{X \otimes Y \ar[r]^-{(\eta_X, \eta_Y)} & RLX \otimes RLY \ar[r]^-{R^2_{X,Y}} & R\bigl(LX \otimes LY\bigr),}\]
is called the \emph{comonoidal structure map} of $L$ \cite{ss03} (3.3).  Here $\eta$ is the unit of the adjunction.
\item Suppose further that $\M$ and $\N$ are symmetric monoidal categories that are also model categories with $R$ lax symmetric monoidal.  Recall the following condition from \cite{white-yau2} (2.4.3).
\begin{itemize}
\item[$(\#)$] Suppose $n \geq 1$, $W \in \Msigmanop$ is cofibrant in $\M$, and $X \in \Msigman$ is cofibrant in $\M$.  Then the map
\[\nicexy@C+.5cm{\bigl[L(W \otimes X)\bigr]_{\Sigma_n} \ar[r]^-{(L^2_{W,X})_{\Sigma_n}} & \bigl[LW \otimes LX\bigr]_{\Sigma_n}}\]
is a weak equivalence in $\N$, where $L^2_{W,X}$ is the comonoidal structure map of $L$ \eqref{comonoidal.map}.  
\end{itemize}
\end{enumerate}
\end{definition}

\begin{definition}\label{quillen.equivalence}
Suppose $L : \M \adjoint \N : R$ is a Quillen adjunction (Def. \ref{def:quillen.adjunction}).
\begin{enumerate}
\item We call $(L, R)$ a \emph{Quillen equivalence} \cite{hovey} (1.3.12) if for each cofibrant object $X$ in $\M$ and each fibrant object $Y$ in $\N$, a map $f : LX \to Y \in \N$ is a weak equivalence if and only if its adjoint $\fbar : X \to RY \in \M$ is a weak equivalence.  In this case, we call $L$ a \emph{left Quillen equivalence} and $R$ a \emph{right Quillen equivalence}.
\item Suppose $\M$ and $\N$ are monoidal model categories.  We call the Quillen adjunction $(L, R)$ a \emph{weak (symmetric) monoidal Quillen adjunction} \cite{ss03} (3.6) if the following three conditions hold.
\begin{enumerate}
\item $R$ is equipped with a lax (symmetric) monoidal structure.  
\item For some cofibrant replacement $q : Q\tensorunit^{\M} \to \tensorunit^{\M}$ of the monoidal unit in $\M$, the composite
\begin{equation}\label{unit.adjoint}
\nicexy{LQ\tensorunit^{\M} \ar[r]^-{Lq} & L\tensorunit^{\M} \ar[r]^-{\Rbar^0} & \tensorunit^{\N}}
\end{equation}
is a weak equivalence in $\N$, in which $\Rbar^0$ is the adjoint of the structure map $R^0 : \tensorunit^{\M} \to R\tensorunit^{\N}$.
\item For any cofibrant objects $X$ and $Y$ in $\M$, the comonoidal structure map \eqref{comonoidal.map} is a weak equivalence in $\N$.
\end{enumerate}
If, furthermore, $(L, R)$ is a Quillen equivalence, then we call it a \emph{weak (symmetric) monoidal Quillen equivalence}.
\end{enumerate}
\end{definition}

\begin{remark}
A weak monoidal Quillen adjunction subsumes a mild monoidal Quillen adjunction (Def. \ref{def:quillen.adjunction}).
\end{remark}

Next we consider some extra cofibrancy conditions on a model category with respect to a symmetric group action.  The condition $(\clubsuit)$ below was  first introduced in \cite{white-yau} (6.2.1), and $(\medstar)$ and $(\filledstar)$ were defined in \cite{white-yau2} (2.4.1).

\begin{definition}\label{def:extra.cofibrancy}
Suppose $\calm$ is a symmetric monoidal category and is a model category.  Define the following three conditions.
\begin{itemize}
\item[$(\clubsuit)$] For each $n \geq 1$ and $X \in \calm^{\sigmaop_n}$ that is cofibrant in $\calm$, the function
\[X \tensorover{\Sigma_n} (-)^{\boxprod n} : \calm \to \calm\]
preserves cofibrations and trivial cofibrations, where $(-)^{\boxprod n}$ is the $n$-fold iterate of the pushout product in Def. \ref{def:ppaxiom}.
\item[$(\filledstar)$] Suppose $n \geq 1$ and $X \in \Msigman$ is cofibrant in $\M$.  Then the coinvariant $X_{\Sigma_n} \in \M$ is also cofibrant.
\item[$(\medstar)$] Suppose $n \geq 1$, $g : U \to V \in \Msigmanop$ is a weak equivalence with $U$ and $V$ cofibrant in $\M$, and $X \in \Msigman$ is cofibrant in $\M$.  Then the map
\[\nicexy{U \tensoroversigman X \ar[r]^-{g \tensoroversigman X} & V \tensoroversigman X}\]
is a weak equivalence in $\M$.
\end{itemize}
\end{definition}

\begin{definition}\label{def:nice.qeq}
A \emph{nice Quillen equivalence} $L : \M \adjoint \N : R$ is a weak symmetric monoidal Quillen equivalence (Def. \ref{quillen.equivalence}) between monoidal model categories such that the following three conditions hold.
\begin{enumerate}
\item $(\#)$ (Def. \ref{def:sharp}), $(\clubsuit)$, and $(\filledstar)$ (Def. \ref{def:extra.cofibrancy}) hold in $\M$ and $\N$.  
\item $\N$ satisfies $(\medstar)$.
\item $\M$ is cofibrantly generated in which every generating cofibration has cofibrant domain.
\end{enumerate}
We call $\M$ \emph{nice} if $\Id : \M \adjoint \M : \Id$ is a nice Quillen equivalence, i.e., if $\M$ is a cofibrantly generated monoidal model category that satisfies $(\clubsuit)$, $(\filledstar)$, and $(\medstar)$, and if every generating cofibration in $\M$ has cofibrant domain.
\end{definition}

\begin{example}\label{ex:nice.quillen}
The following are examples of nice Quillen equivalences:
\begin{enumerate}
\item The Dold-Kan correspondence \eqref{dold-kan} between simplicial modules and non-negatively graded chain complexes of modules over a characteristic zero field.
\item The adjunction between reduced rational simplicial Lie algebras and reduced rational chain complexes of Lie algebras \cite{quillen} (p.211).
\end{enumerate}
Indeed, they are weak symmetric monoidal Quillen equivalences by \cite{ss03} (3.16 and 4.2).  The conditions $(\#)$, $(\clubsuit)$, $(\filledstar)$, and $(\medstar)$ hold if cofibrancy in $\Msigman$ coincides with cofibrancy in $\M$, which is true in the above characteristic zero cases.  In particular, these four monoidal model categories are nice and convenient (Def. \ref{def:convenient}).
\end{example}

\begin{definition}\label{def:sigma.cofibrant}
Suppose $\M$ is a symmetric monoidal closed category with a cofibrantly generated model category structure.   
\begin{enumerate}
\item A $\fC$-colored operad in $\M$ is said to be \emph{$\Sigma$-cofibrant} if its underlying colored symmetric sequence \eqref{symmetric.sequences} is cofibrant with respect to the diagram model category structure \cite{hirschhorn} (11.6.1), in which weak equivalences and fibrations are defined entrywise in $\M$.  
\item An operadic collection $(S,\sO)$ in $\M$ is said to be \emph{$\Sigma$-cofibrant} (resp., \emph{entrywise cofibrant}) if $\osubc \in \operadscm$ is $\Sigma$-cofibrant (resp., entrywise cofibrant) for each set $\fC$.
\end{enumerate}
\end{definition}

We will use the following result from \cite{white-yau2} (4.2.1 and 4.3.2), which is essentially the fixed color set version of the main result of this section.

\begin{theorem}\label{fixed.color.lifting}
Suppose:
\begin{itemize}
\item $(L,R)$ is a weak symmetric monoidal Quillen equivalence (Def. \ref{quillen.equivalence}) such that every generating cofibration in $\M$ has cofibrant domain.
\item $\alpha : \sO \to R\sP$ is a map of $\fC$-colored operads in $\M$ with $\fC$ a set, $\sO$  a $\fC$-colored operad in $\M$, and $\sP$ a $\fC$-colored operad in $\N$ such that the  entrywise adjoint $\alphabar : L\sO \to \sP$ is an entrywise weak equivalence in $\N$.  
\end{itemize}
Suppose further that one of the following two conditions holds:
\begin{enumerate}
\item $(L,R)$ is a nice Quillen equivalence (Def. \ref{def:nice.qeq}), and both $\sO$ and $\sP$ are entrywise cofibrant.
\item Both $\sO$ and $\sP$ are $\Sigma$-cofibrant.
\end{enumerate}
Then there is an induced Quillen equivalence
\begin{equation}\label{fixed.color.ralpha.adj}
\nicexy{\algo \ar@<2pt>[r]^-{\Lbar} & \algp \ar@<2pt>[l]^-{\Ralpha}}
\end{equation}
in which the right adjoint $\Ralpha$ is defined as in \eqref{Ralpha}.
\end{theorem}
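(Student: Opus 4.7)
The adjunction $(\Lbar,\Ralpha)$ already exists by Lemma \ref{ralpha.left.adj}, and it is a Quillen adjunction by Lemma \ref{lifting.qadj}, since every weak symmetric monoidal Quillen adjunction is in particular mild monoidal (Def.~\ref{def:quillen.adjunction}) and the Dwyer-Kan structure on both sides is given by Theorem \ref{algso.dkmodel}. So the task is to upgrade this to a Quillen equivalence. The plan is to verify Hovey's standard criterion: for a cofibrant $(\fC,A)\in\algso$ and a fibrant $(\fD,B)\in\algsp$, a map $f:\Lbar(\fC,A)=(\fC,\Lbar_{\fC}A)\to(\fD,B)$ is a weak equivalence in $\algsp$ if and only if its adjoint $\fbar:(\fC,A)\to(\fD,RB)=\Ralpha(\fD,B)$ is a weak equivalence in $\algso$.

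First I would reduce the problem by applying the canonical factorization of Prop.~\ref{algebra.map.factorization} to both $f$ and $\fbar$. Writing $f_{0}:\fC\to\fD$ for the action on colors, factor $f=h\circ g$ as
\[(\fC,\Lbar_{\fC}A)\xrightarrow{g}(\fC,f_{0}^{\ast}B)\xrightarrow{h}(\fD,B),\]
where $g$ is the identity on colors and $h$ has identity entry maps. Since $f_{0}^{\ast}(RB)=R(f_{0}^{\ast}B)$, the corresponding factorization of $\fbar$ is
\[(\fC,A)\xrightarrow{\gbar}(\fC,R(f_{0}^{\ast}B))\xrightarrow{\hbar}(\fD,RB),\]
and under the fixed-color-set adjunction $(\Lbar_{\fC},R_{\alphac}):\algosubc\adjoint\algpsubc$, the maps $g$ and $\gbar$ are adjoints to one another. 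By Def.~\ref{def:algso.weq.fib} and Remark \ref{rk:algass.weakeq}, a weak equivalence in $\algso$ or $\algsp$ decomposes into (i) a local (entrywise) weak equivalence and (ii) essential surjectivity of $\pizero$ applied to the categorical part (after restricting along $\alpha^{1}$ or $\alpha^{2}$).

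For part (i), since $h$ and $\hbar$ have identity entry maps, $f$ is a local weak equivalence iff $g$ is, and $\fbar$ is a local weak equivalence iff $\gbar$ is. By Lemma \ref{cofibrant.algebra}, $A$ is cofibrant in $\algosubc$ with the projective model structure, and $f_{0}^{\ast}B$ is fibrant in $\algpsubc$ because its entries are among those of $B$, which is entrywise fibrant by Theorem \ref{algso.dkmodel}. Both conditions (1) and (2) of the current theorem are formulated so that their fixed-$\fC$ restrictions feed directly into Theorem \ref{fixed.color.lifting}, applied to the map $\alphac:\osubc\to R\psubc$ whose entrywise adjoint is an entrywise weak equivalence by hypothesis. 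Hence $(\Lbar_{\fC},R_{\alphac})$ is a Quillen equivalence on the fixed color set $\fC$, so $g$ is a local weak equivalence in $\algpsubc$ iff $\gbar$ is a local weak equivalence in $\algosubc$. For part (ii), both $f$ and $\fbar$ act as $f_{0}$ on colors, so essential surjectivity becomes a statement about $f_{0}:\fC\to\fD$ lifting to isomorphism classes in the target. Using the chain of bijections \eqref{pizero.rb} from the proof of Lemma \ref{lifting.qadj}, which gives $\pizerom(R\Bcat)(x,y)\cong\pizeron(\Bcat)(x,y)$ naturally for the entrywise fibrant target $B$, and noting that these bijections are compatible with composition and identities, I get an identity-on-objects isomorphism of categories $\pizerom\bigl(\alpha^{1\ast}(\fD,RB)^{\cat}\bigr)\cong\pizeron\bigl(\alpha^{2\ast}(\fD,B)^{\cat}\bigr)$. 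Essential surjectivity of the functor determined by $f_{0}$ is therefore the same condition on both sides.

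Combining (i) and (ii) yields that $f$ is a weak equivalence in $\algsp$ if and only if $\fbar$ is a weak equivalence in $\algso$, completing the Quillen equivalence check. The step I expect to be the most delicate is the separation argument: care is needed because $\Lbar$ may not be an entrywise application of $L$, so the link between $(\Lbar_{\fC}A)_{t}$ and $A_{t}$ is only available through the fixed-color Quillen equivalence; the factorization isolates exactly the piece of $f$ (namely $g$) on which Theorem \ref{fixed.color.lifting} can be invoked, while the color-changing piece (namely $h$) has identity entries and so contributes only the $\pi_{0}$-essential-surjectivity information that is transparently matched by \eqref{pizero.rb}.
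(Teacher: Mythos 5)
Your proposal does not prove the statement in question; it proves a different theorem. Theorem \ref{fixed.color.lifting} is a \emph{fixed} color set statement: there is a single set $\fC$, $\sO$ is a $\fC$-colored operad in $\M$, $\sP$ is a $\fC$-colored operad in $\N$, and the categories $\algo$ and $\algp$ carry the projective model structure of Def.~\ref{operad.alg.model}, not the Dwyer-Kan structure. There is no color-changing map $f_0:\fC\to\fD$ to factor out, no Prop.~\ref{algebra.map.factorization} to apply, no $\pizero$/essential-surjectivity condition to verify, and no appeal to Theorem \ref{algso.dkmodel}. What you have written is, essentially, a proof of Theorem \ref{lifting.qeq} (the varying-color, operadic-collection version). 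That proof is indeed close in spirit to the paper's proof of \ref{lifting.qeq}, but it is not a proof of \ref{fixed.color.lifting}.

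The decisive problem is circularity. In your part~(i) you write that ``Both conditions (1) and (2) of the current theorem are formulated so that their fixed-$\fC$ restrictions feed directly into Theorem \ref{fixed.color.lifting}, applied to the map $\alphac:\osubc\to R\psubc$\ldots Hence $(\Lbar_{\fC},R_{\alphac})$ is a Quillen equivalence on the fixed color set $\fC$.'' That sentence is literally invoking Theorem \ref{fixed.color.lifting} in order to prove Theorem \ref{fixed.color.lifting}. In the paper this result is not proved at all: it is imported from \cite{white-yau2} (4.2.1 and 4.3.2), where the proof is the homotopical adjoint lifting theorem for algebras over a fixed colored operad (built on the filtration of free algebra extensions, the cofibrancy conditions $(\#)$, $(\clubsuit)$, $(\filledstar)$, $(\medstar)$, and comparisons of pushouts along the comonoidal structure map $L^2$). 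A genuine proof of \ref{fixed.color.lifting} would need to reproduce that argument, not reduce to it. If the intent was to prove Theorem \ref{lifting.qeq}, then the approach is reasonable and matches the paper's strategy, but that is a different statement than the one posed.
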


\begin{remark}
In the above theorem in \cite{white-yau2} the categories $\algo$ and $\algp$ are semi-model categories \cite{fresse-book,fresse,hovey-monoidal,spitzweck-thesis} with entrywise defined weak equivalences and fibrations (Def. \ref{operad.alg.model}).  In our main result below our assumptions will be strong enough to ensure that the algebra categories under consideration are genuine model categories.  See Example \ref{ex:bm-admissible}.
\end{remark}

\subsection{Lifting Quillen Equivalences}
We are now ready for the main result of this section.

\begin{theorem}\label{lifting.qeq}
Under Assumption \ref{lifting.qadj.setting} suppose that:
\begin{itemize}
\item $(L,R)$ is a weak symmetric monoidal Quillen equivalence (Def. \ref{quillen.equivalence}) between convenient model categories (Def. \ref{def:convenient}) such that every generating cofibration in $\M$ has cofibrant domain.
\item For each set $\fC$, the entrywise adjoint
\[\nicexy{L\osubc \ar[r]^-{\alphacbar} & \psubc \in \N^{\profscsc}}\]
of the map $\alphac : \osubc \to R\psubc$ is an entrywise weak equivalence in $\N$.
\end{itemize}
Suppose further that one of the following two conditions holds:
\begin{enumerate}
\item $(L,R)$ is a nice Quillen equivalence (Def. \ref{def:nice.qeq}), and both operadic collections $(S,\sO)$ and $(S,\sP)$ are entrywise cofibrant.
\item Both $(S,\sO)$ and $(S,\sP)$ are $\Sigma$-cofibrant.
\end{enumerate}
Then the Quillen adjunction
\begin{equation}\label{lbar.ralpha.qeq}
\nicexy{\algso \ar@<2.5pt>[r]^-{\Lbar} & \algsp \ar@<2.5pt>[l]^-{\Ralpha}}
\end{equation}
in \eqref{lbar.ralpha.qadj} is a Quillen equivalence.
\end{theorem}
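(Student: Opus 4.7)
The plan is to verify the derived adjunction criterion for Quillen equivalence: for each cofibrant $X = (\fC, A) \in \algso$ and each fibrant $Y = (\fD, B) \in \algsp$, a map $f : \Lbar X \to Y$ in $\algsp$ is a weak equivalence if and only if its adjoint $\fbar : X \to \Ralpha Y$ in $\algso$ is. A crucial preliminary is that $\Ralpha$ is the identity on color sets, so by uniqueness of left adjoints applied fiberwise, $\Lbar$ also preserves color sets and its restriction to the subcategory of algebras with color set $\fC$ coincides with the local left adjoint $\Lbar_{\fC} : \alg(\osubc) \to \alg(\psubc)$ of Theorem \ref{fixed.color.lifting}. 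The hypotheses of that theorem are satisfied for every $\fC$ by our standing assumption, since either niceness-with-entrywise-cofibrancy or $\Sigma$-cofibrancy passes from the operadic collection to each individual colored operad; in particular $\Lbar X = (\fC, \Lbar_{\fC} A)$.

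Given such $f$ with underlying color map $f_0 : \fC \to \fD$, I would apply the canonical factorization of Prop. \ref{algebra.map.factorization} to write
\[(\fC, \Lbar_{\fC} A) \xrightarrow{f_1} (\fC, f_0^{*} B) \xrightarrow{f_2} (\fD, B),\]
where $f_1$ is the identity on colors and $f_2$ is the identity on entries. By the augmentation-compatibility square \eqref{augmentation.compatible}, the $\osubc$-algebras $f_0^{*}(RB)$ and $R(f_0^{*} B)$ coincide, so the analogous factorization of $\fbar$ reads
\[(\fC, A) \xrightarrow{\fbar_1} (\fC, R(f_0^{*} B)) \xrightarrow{\fbar_2} (\fD, RB),\]
with $\fbar_1$ identified as the adjoint of $f_1$ under the local Quillen equivalence of Theorem \ref{fixed.color.lifting} and $\fbar_2 = \Ralpha(f_2)$. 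By Lemma \ref{cofibrant.algebra}, $A$ is cofibrant in the projective model structure on $\alg(\osubc)$ (Def. \ref{operad.alg.model}), while $f_0^{*} B$ is entrywise fibrant and hence projectively fibrant in $\alg(\psubc)$, so Theorem \ref{fixed.color.lifting} applies to the pair $(f_1, \fbar_1)$.

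I would then conclude by analyzing the two factors separately. For $(f_1, \fbar_1)$, both identity on colors, being a weak equivalence in the ambient Dwyer-Kan structure reduces to being entrywise a weak equivalence, and Theorem \ref{fixed.color.lifting} yields that $f_1$ is a weak equivalence if and only if $\fbar_1$ is. For $(f_2, \fbar_2)$, both identity on entries, the entrywise condition is automatic, so being a weak equivalence reduces to the categorical restriction $(-)^{\cat}$ being homotopically essentially surjective; since $\fbar_2 = \Ralpha(f_2)$, the bijection \eqref{pizero.rb} established in the proof of Lemma \ref{lifting.qadj} shows that $\pizerom(\fbar_2^{\cat})$ is essentially surjective if and only if $\pizeron(f_2^{\cat})$ is. Because $f_1^{\cat}$ and $\fbar_1^{\cat}$ are the identity on objects, the essential surjectivity of the full composites $f^{\cat}$ and $\fbar^{\cat}$ coincides with that of $f_2^{\cat}$ and $\fbar_2^{\cat}$ respectively, yielding that $f$ is a weak equivalence if and only if $\fbar$ is. The main obstacle I anticipate is the bookkeeping needed to verify that the global adjunction $(\Lbar, \Ralpha)$ really does restrict on each fiber to the local adjunction of Theorem \ref{fixed.color.lifting}, and in particular that the adjoint of $f_1$ is $\fbar_1$; once this compatibility between the global and local adjunctions is in place, the proof is a direct combination of Theorem \ref{fixed.color.lifting} with the $\pi_0$-calculation already recorded in the proof of Lemma \ref{lifting.qadj}.
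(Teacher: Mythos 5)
Your proposal is correct and follows essentially the same route as the paper's proof: factor both the map and its adjoint via Prop.~\ref{algebra.map.factorization}, handle the color-fixing factor using Theorem~\ref{fixed.color.lifting} together with Lemma~\ref{cofibrant.algebra}, and handle the entry-fixing factor using the $\pizero$-computation from the proof of Lemma~\ref{lifting.qadj}. One small slip: the identification $f_0^{*}(RB) = R(f_0^{*}B)$ of $\osubc$-algebras follows from the naturality of $\alpha$ as a map of operadic collections (i.e., the commuting square $\alphad \circ \sO_{f_0} = R\sP_{f_0} \circ \alphac$), not from the augmentation-compatibility square \eqref{augmentation.compatible}, which concerns only the categorical part.
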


\begin{proof}
Suppose given a map
\[\nicexy{(\fC,A) \ar[r]^-{\varphi} & \Ralpha(\fD,B) = (\fD,RB) \in \algso}\]
such that $(\fC,A) \in \algso$ is cofibrant and $(\fD,B) \in \algsp$ is fibrant.  So $B$ is a locally fibrant $\psubd$-algebra.  The adjoint of the map $\varphi$ is a map
\begin{equation}\label{adjoint.varphi}
\nicexy{\Lbar(\fC,A) = \bigl(\fC,\Lbarc A\bigr) \ar[r]^-{\varphibar} & (\fD,B) \in \algsp,}
\end{equation}
which is the same as $\varphi$ on colors.  Here
\begin{equation}\label{ralphac.lbarc}
\nicexy{\algosubc \ar@<2pt>[r]^-{\Lbarc} & \algpsubc \ar@<2pt>[l]^-{\Ralphac}}
\end{equation}
is the Quillen equivalence in \eqref{fixed.color.ralpha.adj} induced by the map $\alphac : \osubc \to R\psubc$ of $\sofc$-colored operads.  The equality on the left side of \eqref{adjoint.varphi} comes from the fact that $\Ralphac$ is defined in the same way as $\Ralpha$ \eqref{Ralpha}.  We must show that $\varphi \in \algso$ is a weak equivalence if and only if $\varphibar \in \algsp$ is a weak equivalence.  

Consider the factorizations
\[\nicexy{(\fC,A) \ar[r]^-{\alpha} \ar `u[rr] `[rr]|-{\varphi} [rr] & (\fC,\varphi^* RB) \ar[r]^-{\beta} & (\fD,RB)} \in \algso\]
and
\[\nicexy{\bigl(\fC,\Lbarc A\bigr) \ar[r]^-{\alphabar} \ar `u[rr] `[rr]|-{\varphibar} [rr] & (\fC,\varphibar^*B) \ar[r]^-{\betabar} & (\fD,B)} \in \algsp\]
as in \eqref{f.factorization}.  Using the abbreviation and essentially the same calculation as in \eqref{pizero.rb}, there is an identification
\begin{equation}\label{pizero.identification}
\pizero(\beta^{\cat}) \cong \pizero(\betabar^{\cat}) \in \Cat.
\end{equation}
By Def. \ref{def:algso.weq.fib} of the Dwyer-Kan model category structure on $\algso$, the map $\varphi$ is a weak equivalence if and only if
\begin{enumerate}
\item $\alpha \in \algosubc$ is a weak equivalence in the projective model category structure (Def. \ref{operad.alg.model}), and
\item $\pizero(\beta^{\cat}) \in \Cat$ is essentially surjective.
\end{enumerate}
A similar remark applies to $\varphibar$.  By \eqref{pizero.identification} the map $\pizero(\beta^{\cat}) \in \Cat$ is essentially surjective if and only if $\pizero(\betabar^{\cat}) \in \Cat$ is so.  It remains to show that $\alpha$ is a weak equivalence if and only if $\alphabar$ is a weak equivalence.

Since $B$ is locally fibrant, the $\psubc$-algebra $\varphibar^*B$, whose entries are among those of $B$, is fibrant in the projective model category structure.  Moreover, we have an equality
\[\varphi^*RB = \Ralphac (\varphibar^\ast B) \in \algosubc.\]
Since $(\fC,A) \in \algso$ is cofibrant in the Dwyer-Kan model category structure,  $A \in \algosubc$ is cofibrant in the projective model category structure by Lemma \ref{cofibrant.algebra}.  Using the Quillen equivalence $(\Lbarc, \Ralphac)$ in \eqref{ralphac.lbarc}, we infer that the map
\[\nicexy{A \ar[r]^-{\alpha} & \varphi^*RB = \Ralphac (\varphibar^\ast B) \in \algosubc}\]
is a weak equivalence if and only if its adjoint
\[\nicexy{\Lbarc A \ar[r]^-{\alphabar} & \varphibar^\ast B \in \algpsubc}\]
is a weak equivalence.  We have shown that $\varphi$ is a weak equivalence if and only if $\varphibar$ is a weak equivalence.
\end{proof}

\subsection{Application to Operad-Like Structure}\label{operad.like.lifting}

Suppose $L : \M \adjoint \N : R$ is as in Theorem \ref{lifting.qeq}(1), i.e., a nice Quillen equivalence (Def. \ref{def:nice.qeq}) between convenient model categories (Def. \ref{def:convenient}), such as those in Example \ref{ex:nice.quillen}.  Suppose $(S,\Gm)$ is an operadic collection in $\M$ for a chosen operad-like structure in Example \ref{ex:gprops}, and similarly for $(S,\Gn)$ in $\N$ for the same operad-like structure.  In this setting, we will observe that Theorem \ref{lifting.qeq} applies.  First, both $(S,\Gm)$ and $(S,\Gn)$ are augmented operadic collections by Example \ref{ex:operad-like-augmented}.

For each set $\fC$, each entry of the $\sofc$-colored operad $\Gmc$ has the form $\coprod \tensorunitm$, and the corresponding entry of $\Gnc$ has the form $\coprod \tensorunitn$.  The two coproducts are indexed by the same set of isomorphism classes of graphs.  Since the monoidal units are cofibrant, the coproducts $\coprod \tensorunitm$ and $\coprod \tensorunitn$ are cofibrant.  So both operadic collections $(S,\Gm)$ and $(S,\Gn)$ are entrywise cofibrant.  

Define a map
\[\alpha : (S,\Gm) \to (S,R\Gn)\]
of operadic collections in $\M$ as follows.  For each set $\fC$, each entry of the map
\[\nicexy{\Gmc \ar[r]^-{\alphac} & R\Gnc \in \operadscm}\]
is the adjoint of the coproduct of maps
\[\nicexy{(L\Gmc)(\vdots) = L\bigl(\coprod \tensorunitm\bigr)  \cong \coprod L\tensorunitm \ar[r]^-{\alphabarc} & \coprod \tensorunitn = \Gnc(\vdots)}.\]
By our assumptions on $(L,R)$, the map $L\tensorunitm \to \tensorunitn$, which is adjoint to the monoidal functor structure map $\tensorunitm \to R\tensorunitn$, is a weak equivalence between cofibrant objects.  It follows that the entry map $\alphabarc$ above is also a weak equivalence between cofibrant objects.  Moreover, the augmentation compatibility diagram \eqref{augmentation.compatible} is commutative here.  In fact, the map called $\epsilon$ there is entrywise the adjoint of a sub-coproduct of the map $\alphabarc$ above, corresponding to the subset of isomorphism classes of linear graphs.

Therefore, Theorem \ref{lifting.qeq}(1) applies in this setting, and we obtain the following induced Quillen equivalences:
\[\begin{split}
\bigl(\text{vprops in $\M$}\bigr) &\adjoint \bigl(\text{vprops in $\N$}\bigr)\\
\bigl(\text{Colored operads in $\M$}\bigr) &\adjoint \bigl(\text{Colored operads in $\N$}\bigr)\\
\bigl(\text{Dioperads in $\M$}\bigr) &\adjoint \bigl(\text{Dioperads in $\N$}\bigr)\\
\bigl(\text{Properads in $\M$}\bigr) &\adjoint \bigl(\text{Properads in $\N$}\bigr)\\
\bigl(\text{Props in $\M$}\bigr) &\adjoint \bigl(\text{Props in $\N$}\bigr)\\
\bigl(\text{Wheeled operads in $\M$}\bigr) &\adjoint \bigl(\text{Wheeled operads in $\N$}\bigr)\\
\bigl(\text{Wheeled properads in $\M$}\bigr) &\adjoint \bigl(\text{Wheeled properads in $\N$}\bigr)\\
\bigl(\text{Wheeled props in $\M$}\bigr) &\adjoint \bigl(\text{Wheeled props in $\N$}\bigr)
\end{split}\]
Under weaker assumptions, a similar Quillen equivalence between categories of small enriched categories is \cite{muro15} (1.4).


\end{document}